\newtheorem{thm}{Theorem}[section]
\newtheorem{lm}[thm]{Lemma}
\newtheorem{df}[thm]{Definition}
\newtheorem{prp}[thm]{Proposition}
\newtheorem{rmk}[thm]{Remark}
\newcommand{\NN}{\mathbb{N}}
\newcommand{\ZZ}{\mathbb{Z}}
\newcommand{\RR}{\mathbb{R}}
\newcommand{\CC}{\mathbb{C}}
\newcommand{\DD}{\mathbb{D}}
\newcommand{\PP}{\mathbb{P}}
\newcommand{\mc}[1]{\mathcal{#1}}
\newcommand{\rb}{\right)}
\newcommand{\lb}{\left(}
\newcommand{\rcb}{\right\rbrace }
\newcommand{\lcb}{\left\lbrace }
\newcommand{\rsb}{\right]}
\newcommand{\lsb}{\left[}
\newcommand{\rv}{\right|}
\newcommand{\lv}{\left|}
\newcommand{\Sch}{{\sl S}}
\newcommand{\fol}{\mathcal{F}}
\newcommand{\Ps}{{\sl P}}
\newcommand{\Dv}{{\sl D}}
\newcommand{\SL}{{\rm SL(2,\mathbb{C})}}
\newcommand{\PSLR}{{\rm PSL(2,\mathbb{R})}}
\newcommand{\PSLC}{{\rm PSL(2,\mathbb{C})}}
\newcommand{\AffC}{{\rm Aff(\mathbb{C})}}
\newcommand{\slC}{\mathfrak{sl}_{2}\mathbb{C}}
\begin{document}

\title[Uniformizable projective structures]{Uniformizable singular projective structures on Riemann surface orbifolds}

\author{Ahmed Elshafei, \, \, \, Julio C. Rebelo \, \, \, \& \, \, \, Helena Reis}
\address{}
\thanks{}

\subjclass[2010]{Primary 37F75, 57M50; Secondary 30F40}
\keywords{Uniformizable projective structures, singular points, monodromy}

\maketitle

\begin{abstract}
This paper is devoted to characterizing complex projective structures defined on Riemann surface orbifolds and giving rise to
injective developing maps defined on the monodromy covering of the surface (orbifold) in question.
The relevance of these structures
stems from several problems involving vector fields with uniform solutions as well as from problems about
``simultaneous uniformization'' for leaves of foliations by Riemann surfaces. In this paper, we first describe the
local structure of the mentioned projective structures showing, in particular, that they are locally bounded.
In the case of Riemann surface orbifolds of finite type, the previous result will then allow us to provide a detailed
global picture of these projective structures by exploiting their connection with the class of ``bounded
covering projective structures''.
\end{abstract}

\section{Introduction}

It is fair to say that the notion of {\it (singular) uniformizable projective structures} on Riemann surfaces
was first put forward
in \cite{guillotIHES} where the author conducted a very complete study of classical Halphen systems by exploiting
their connection with the Lie algebra $\slC$. Whereas the issue was already implicit in the classical literature,
in particular in the work of Schwarz about triangle functions and solutions of Gauss hypergeometric equation,
it is only in \cite{guillotIHES} that the
relevance of the underlying projective structure is made explicit and effectively used to
clarify the rather non-trivial dynamics
of vector fields with single-valued solutions (aka semicomplete vector fields, see \cite{adolfoandI})
as those in Halphen systems. Similar issues
also arise for more general automorphic functions as it will be seen.

Basically, a singular projective structure $\Ps$ on a Riemann surface orbifold is said to be {\it uniformizable} if the
developing map associated with the {\it monodromy covering}\, is one-to-one (see Section~\ref{sec:Basic_Schwarz} for
accurate definitions).
The interest of these projective structures is further emphasized in some recent papers. For example,
in \cite{adolfoandI}, a crucial part of the argument relies on the existence of uniformizable affine structures on
the polar divisor of the vector fields in question. Also, the systematic use of uniformizable projective structures
allows for important generalizations of Halphen vector fields, see \cite{adolfoHalphenfractal}, \cite{A_thesis}
and \cite{paper-A}. Additional (potential) applications of these structures to the general problem of ``simultaneous
uniformization'' for foliations will be indicated later. In any event, to have a feeling for the interest of this notion,
we may simply consider an action of $\SL$ and a three-dimensional orbit $\mathcal{L}$ of it.
Consider now the standard one-parameter subgroups
$g, h_-$, and $h_+$ of
of $\SL$ represented respectively by the ``geodesic flow'', the ``negative horocyclic flow'',
and the ``positive horocyclic flow''. The semi-direct product of $g$ and $h_-$ yields an action of the affine
group $\AffC$ on $\mathcal{L}$ whose orbits are transverse to $h_+$ in $\mathcal{L}$. In particular, $\mathcal{L}$ is equipped with a foliation
$\fol$ whose leaves are the orbits of $\AffC$. Under these conditions, and wild as it may be, the leaf space of $\fol$ is
naturally endowed with a projective structure arising from the flow of $h_+$ which, in fact,
is uniformizable since the action of $\SL$ is globally defined. Here, it is worth mentioning that similar projective structures
have also been exploited in previous works by E. Ghys involving Anosov flows, see
for example \cite{ghys1} and \cite{ghys2}.

For the time being, it suffices to think that what precedes serves as motivation for us to try and describe
singular uniformizable projective structures on Riemann surfaces, or more generally, on Riemann surface orbifolds.
This problem is subtle already in the case of compact Riemann surfaces (orbifolds) $\mc{L}$ since Bers simultaneous
uniformization theorem ensures that the space of uniformizable projective structures contains a copy of
a suitable Teichmuller space. This is in stark contrast with the special case of (singular) uniformizable affine structures
as exploited in \cite{adolfoandI}: on compact Riemann surface orbifolds there are only a handful of the latter structures.

In terms of classifying (singular) uniformizable projective structures, the main results obtained in this
paper - ordered from local to global - are listed in the sequel. We also note that Theorem~A below can naturally
be viewed as an extension of the so-called ``Fundamental Lemma'' in \cite{adolfoandI} from the setting of
``vector fields/uniformizable affine structures'' to the context of ``uniformizable projective structures''.

\vspace{0.2cm}

\noindent {\bf Theorem A}. {\sl Let $\Ps$ be a uniformizable projective structure on a
	punctured neighborhhod of $0 \in \CC$ and denote by $\xi (z) dz^2$ the corresponding Schwarzian differential.
	Then one of the following holds:
	\begin{itemize}
		\item The local monodromy of $\Ps$ around $0 \in \CC$ is an elliptic element of finite order~$k \in \ZZ$.
		Moreover, the lift of $\Ps$ to the ramified $k$-sheet covering $\DD_\mu$ of $(\CC,0)$ extends holomorphically
		to all of $\DD_\mu$.
		
		\item The local monodromy of $\Ps$ around $0 \in \CC$ is a parabolic element. Then $\xi (z)$ has
		a simple pole at $0 \in \CC$. Moreover, if the monodromy is given by $z \mapsto z +c$, then the residue of $\xi$ at $0 \in \CC$
		(coefficient of the term $z^{-1}$) in $\xi$ is given by $-c/(a_{-1}\pi i)$ where $a_{-1}$
		is the coefficient of $z^{-1}$ in the Laurent extension of the holomorphic function $g$ on $\DD^*$ cf. below.
	\end{itemize}
}

\vspace{0.1cm}

As to the first item of Theorem~A, we note that it is easy to obtain a ``ramified'' local expression for
$\xi$ out of the fact that its lift to the coordinate $y$, $z=y^k$, is holomorphic at the origin. A
useful consequence of Theorem~A is Corollary~B below. To state this result,
recall that every projective structure
on a Riemann surface (orbifold) $\mc{L}$ gives rise to a holomorphic quadratic differential $\omega = \xi (z) dz^2$
on $\mc{L}$ called its {\it Schwarzian differential}, see Section~\ref{sec:Basic_Schwarz}.
In turn, if $\omega = \xi (z) dz^2$ is a quadratic differential
on $\mc{L}$, we define the $L^{\infty}$-(hyperbolic) norm of $\xi (z) dz^2$ as follows. Letting $\widetilde{\omega}
= \widetilde{\xi} d\widetilde{z}^2$ denote the lift of $\omega$ to the unit disc $\DD$ viewed as the universal
covering of $\mc{L}$, we pose
$$
\Vert \omega \Vert_{\infty} = \Vert \widetilde{\omega} \Vert_{\infty} = \frac{1}{4} \sup_{\widetilde{z} \in \DD}
\lv \widetilde{\xi} (\widetilde{z}) \rv \, \lb 1 -\lv \widetilde{z} \rv^2\rb^2 \, .
$$
A projective structure is said to be {\it bounded}\, if it has finite $L^{\infty}$-norm. Another standard norm associated with
projective structures is $L^1$-norm $\Vert \omega \Vert_{1}$ of $\omega$. The $L^1$-norm of $\omega$
also has a simple geometric interpretation as the area of $\mc{L}$ with respect to the singular Euclidean metric
$\vert \xi (z) \vert \vert dz\vert^2$. The $L^1$-norm is widely used in Teichm\"uller theory because
of its natural behavior under conformal maps though the its intrinsic meaning is
less clear than the finiteness of the $L^{\infty}$-norm. However, uniformizable projective structures verify the following:

\vspace{0.2cm}

\noindent {\bf Corollary B}.
{\sl Assume that $\mc{L}$ has finite hyperbolic area (e.g. it is a compact Riemann surface with finitely many punctures).
	If $\Ps$ is a uniformizable projective structure on $\mc{L}$ then we have
	$$
	\max \lcb \Vert \omega \Vert_{\infty} \, , \, \Vert \omega \Vert_1 \rcb < \infty \, ,
	$$
	where $\omega$ stands for the Schwarzian differential associated with $\Ps$. In other words, $\Ps$ is both
	bounded and of finite Euclidean area.}

\vspace{0.1cm}

Finally, the bounded character of $\Ps$ allows us to resort to previously known
results on {\it covering projective structures}\, to obtain
further insight on uniformizable ones, provided that we are dealing with a compact
Riemann surface (orbifold). More precisely results in \cite{shiga} and \cite{kra3}
imply the following:

\vspace{0.2cm}

\noindent {\bf Theorem C}.
{\sl Let $\mc{L}$ denote a Riemann surface with finite area and denote by $\mathcal{U} (\mc{L})$ the
	set of uniformizable projective structures on $\mc{L}$. Then the following holds:
	\begin{itemize}
		\item[(1)] The interior of $\mathcal{U} (\mc{L})$ coincides with quasi-conformal
		deformations of the canonical projective structure on $\mc{L}$. Equivalently,
		it coincides with the Bers embedding, centered at $\mc{L}$ (with classes of elliptic elements fixed), of
		the Teichm\"uller space of $\mc{L}$.

		\item[(2)] A projective structure $\Ps \in \mathcal{U} (\mc{L})$ is an isolated point of $\mathcal{U} (\mc{L})$
		if and only if the monodromy group of $\Ps$ contains no accidental parabolics and either the discontinuity domain
		of this monodromy group is connected or the quotients of the remaining components represent only thrice punctured
		spheres.
	\end{itemize}
}

To close this introduction, it is now convenient to explain the structure of our approach to the above theorems
together with their connections to some previous works. Since the definition
of uniformizable projective structure involves one-to-one - i.e. injective - developing maps, it is probably
convenient to begin by recalling that a
classical result of Kraus \cite{kraus} states that a complex projective structure on a hyperbolic Riemann surface
giving rise to an injective (i.e. one-to-one) developing map on the disc must have $L^{\infty}$-norm bounded by $3/2$
(see below for details).
As a partial converse, Nehari \cite{nehari} showed
that any projective structure whose $L^{\infty}$-norm is bounded by $1/2$ gives rise to an injective
developing map from the disc to $\CC$. Naturally both theorems essentially belong to the theory of univalent
functions on the unit disc $\DD$.

Compared to \cite{kraus}, \cite{nehari},
there is, however, a crucial difference which stems from the fact
that our developing maps are assumed to be injective on
the so-called {\it monodromy covering} rather than on the {\it universal covering}. As will soon be clear,
when dealing with injective developing maps, it is the monodromy covering, rather than the universal one, that stands out
as the most natural domain of definition.
Replacing the universal covering by the monodromy one, however,
makes the general theory of univalent functions on the disc a less effective tool
since the monodromy covering may be a very non-trivial quotient of the disc.
Incidentally, the bound of $3/2$ found by Kraus is no longer valid for uniformizable projective structures. In fact,
it is easy to adapt the construction in Section~\ref{sec:Cov_PS} to obtain examples of uniformizable
projective structures with arbitrarily large $L^{\infty}$ norm.

Along similar
lines, there is also a considerable gap between the two sets of projective, namely uniformizable projective structures
and projective structures whose developing map is injective on the universal covering. Indeed, the latter
immediately rules out the existence of elliptic elements in the monodromy group
hence imposing serious constraints on the projective structures in question, cf. \cite{annalspaper}.
Ruling out elliptic monodromy also excludes several cases of interest, beginning with the simplest possible case
which is provided by triangular groups and Schwarz automorphic functions. Clearly a triangular group
is the image of the fundamental group of the thrice punctured sphere by a homomorphism with rather large kernel.
In particular, the (automorphic) developing maps arising from the naturally associated projective structure are
injective on the monodromy covering but very far from this if considered on the universal covering.

In view of the above mentioned issues,
our approach to Theorems~A and~C will differ considerably from the methods in \cite{kraus}, \cite{nehari}.

On the other hand, another much studied class of (singular) projective structures on compact Riemann surfaces
are the so-called {\it bounded covering projective structures}, see for example \cite{kra2}, \cite{kra3}, and \cite{shiga}.
Whereas it is easy to check that a uniformizable projective structure is of covering type (cf. Lemma~\ref{lm:UPS_to_CPS}),
it is less clear whether or not the initial structure must be {\it bounded}. It is this boundedness issue that really prevents us
from obtaining Theorem~C straightly out of the results obtained by I. Kra and H. Shiga. We are then led to first work out
the general discussion leading to Theorem~A and Corollary~B.

Finally in the proof of Theorem~A, the local type of analysis needed to conclude, for example, the bounded
nature of the projective structure in question will take us close to an interesting conjecture put forward
by B. Elsner in \cite{elsner}, cf. Proposition~\ref{Key_proposition_oninjectivedevelopingmaps}.
As a matter of fact, Proposition~\ref{Key_proposition_oninjectivedevelopingmaps} is more general than what is strictly necessary
for this paper and, in fact, its use in the Theorem~A can be avoided. However, the ideas introduced
to establish Proposition~\ref{Key_proposition_oninjectivedevelopingmaps} seem
fit to deal with more general situations. Indeed, whereas we have not tried to fully push forward the method
to find out how much insight it can provide in Elsner's question, we believe that a serious attempt in this direction
would be worth a shot. Also, as another example of application of our ideas, we state Proposition~\ref{prp:inject_ess_sing} which appears naturally
in some related questions and whose proof can straightforwardly be obtained from our discussion.

As to the structure of this paper, it should be said that some effort was made to make it as self-contained as possible.
In particular, we have included
proofs of some lemmas that may be regarded as well-known to experts in projective
structures and related Teichmuller theory. The present paper, however, is likely to be of interest for
experts in differential equations and integral systems by virtue of the connection between uniformizable
projective structures and (generalized) Halphen
systems, to mention just one example. A
significant part of the latter community being less familiar with general Teichmuller theory
and with the role of quadratic differentials in it, however, we feel that our choice might be justified.

In closing this introduction, let us mention that another (potential) application of these ideas appears in the
context of foliated projective structures and it has some points of contact with the recent preprint
\cite{deroinguillot}. Whereas the paper \cite{deroinguillot} discusses the existence of foliated projective
structures on a (singular) holomorphic foliation on a complex surface, it would also be interesting to investigate
how far leaves (not necessarily all of them) of (singular) holomorphic foliations can be equipped
with {\it singular uniformizable projective structures}. This question was pointed out by B. Deroin to the second
author longtime ago. It might be viewed as a natural generalization of
the study of semicomplete vector fields, i.e., of vector fields with single-valued solutions.
Since the existence of uniformizable projective
structures on Riemann surfaces is a much more common phenomenon than the existence of semicomplete vector fields,
it is reasonable to wonder that the space of foliations admitting the singular uniformizable foliated projective
structures is singnificantly larger than the set of foliations tangent to a semicomplete vector field
(see \cite{adolfoandI} for the classification of the latter on K\"ahler surfaces). The upshot here is that
(singular) uniformizable foliated projective structures are still capable of providing some fine control on the Riemann mapping
uniformizing individual leaves. In other words, the set of foliations admitting
singular uniformizable foliated projective structures
may contain numerous interesting examples of foliations for which ``simultaneous uniformization problems'' can be
discussed in depth. Clearly, the problem of ``simultaneous uniformization for foliations'' has basic intrinsic importance and,
among their many applications, we single out the Ergodic theory of foliations as developed by
Sibony, Fornaess, Dinh, and Nguy\^en, see for example the nice survey \cite{vietanh} and
references therein.

\medskip
\noindent
\textbf{Acknowledgment.} This research was partially supported by CIMI through the project
``Complex dynamics of group actions, Halphen and Painlev\'e systems''.
A. Elshafei was supported by the FCT-grant PD/BD/143019/2018.
A. Elshafei and H. Reis were also
partially supported by CMUP, which is financed by national funds through FCT – Funda\c{c}\~ao para a Ci\^encia e Tecnologia, I.P.,
under the project UIDB/00144/2020. H. Reis was also supporter by FCT through the project ``Means and Extremes in Dynamical Systems''
with reference PTDC/MAT-PUR/4048/2021.


\section{Basic constructions and uniformizable projective structures}\label{sec:Basic_Schwarz}

In what follows, by a Fuchsian group (resp. Kleinian group) it is meant a discrete subgroup of
$\PSLR$ (resp. $\PSLC$). In other words, both Fuchsian and Kleinian groups are allowed to contain
elliptic elements of finite order.

The quotient of the hyperbolic disc by a Fuchsian group $\Gamma$ is not necessarily a (hyperbolic) Riemann
surface but rather a {\it Riemann surface orbifold}. Naturally a point of the disc $\DD$ that is fixed
by a non-trivial elliptic element of $\Gamma$, necessarily of finite order, gives rise to a singular point
in the quotient space $\DD /\Gamma$. In particular, the singular points of a Riemann surface orbifold must
form a discrete set. Moreover, if $p \in \DD /\Gamma$ is one of the singular points, then the local structure
of $\DD /\Gamma$ around $p$ is equivalent to the quotient of a small disc in $\CC$ by a finite group of
rotations.

Unless otherwise mentioned, all Riemann surfaces and/or Riemann surface orbifolds considered in this
paper are of {\it hyperbolic nature}, namely given by the quotient of $\DD$ by a Fuchsian group.
Let then $\mc{L}$ stand for a Riemann surface orbifold. A {\it singular projective structure}\, on $\mc{L}$
consists of the following data
\begin{itemize}
\item[(1)] A discrete set $\Upsilon \subseteq \mc{L}$ containing, in particular, all singular points
of $\mc{L}$.

\item[(2)] A projective structure on the Riemann surface $\mc{L} \setminus \Upsilon$. In other words,
the Riemann surface $\mc{L} \setminus \Upsilon$ is equipped with an atlas all of whose change of
coordinates coincide with suitable restrictions of M\"obius automorphisms of $\CC \PP^1$.
\end{itemize}
The points of $\Upsilon$ are thought of as the singular points of the projective structure in question.
In the sequel, we will also use the phrase {\it projective surface}\, to refer to a Riemann
surface equipped with a non-singular projective structure.

A singular projective structure on $\mc{L}$ gives rise to a homomorphism (monodromy representation)
$\mu$ from the fundamental group $\pi_1 (\mc{L}\setminus \Upsilon)$
to $\PSLC$ along with a developing map $\Dv$
from the universal covering of $\mc{L}$ to $\CC\PP^1$. Furthermore, the pair $(\Dv,\mu)$
satisfies the following equivariance relation
\begin{equation}\label{eq:dev-mon-rel}
	\Dv([c].p)=\mu (c).\Dv(p) \ ,
\end{equation}
for every $p$ in the universal covering of $\mc{L} \setminus \Upsilon$, $c \in \pi_{1}(\mc{L}\setminus \Upsilon)$
and where $[c]$ stands for the covering automorphism induced by $c$. However,
the homomorphism $\mu$ need not be injective and, in fact, there is basically no restriction on $\mu$ as
shown in \cite{annalspaper}. Hence the kernel ${\rm Ker}\, (\mu)$ is in general far from trivial.
Nonetheless, a direct inspection in the standard construction of the developing map, makes it clear
that such a map can be defined not only on the universal covering of $\mc{L}$ but on
{\it every covering $\widetilde{\mc{L}}$}\, such that $\pi_1 (\widetilde{\mc{L}})
\subset {\rm Ker}\, (\mu)$. Furthermore the resulting developing map still satisfies Equation~(\ref{eq:dev-mon-rel}).
In particular, the regular covering $\mc{L}_{\mu}$ associated with ${\rm Ker}\, (\mu)$ is the {\it smallest
covering of $\mc{L}$} on which a developing map is defined. The covering $\mc{L}_{\mu}$ will be called
the {\it monodromy covering}\, of $\mc{L}$ and the corresponding developing map
$\Dv_{\mu} : \mc{L}_{\mu} \rightarrow \CC\PP^1$ will be called the {\it monodromy-developing map}.

\begin{df}\label{def:uniformizable}
With the above notations, the singular projective structure $\Ps$ on $\mc{L}$ is said to be uniformizable if the developing
map $\Dv_{\mu}$ is injective on $\mc{L}_{\mu}$.
\end{df}

It follows from Definition~\ref{def:uniformizable} that, if $\Ps$ is a uniformizable singular projective structure
on $\mc{L}$, then the restriction of $\Ps$ to any open subset $U$ of $\mc{L}$ is still uniformizable on $U$.
The lemma below provides an alternative characterization of uniformizable singular projective structures.

\begin{lm}\label{lm:uniformizable_vs_quotient}
The singular projective structure $\Ps$ is uniformizable if and only if the projective
surface $\mc{L} \setminus \Upsilon$ is
isomorphic to the quotient of an open (invariant) subset $\mathcal{U}$ of $\CC \PP^1$
by a Kleinian group.
\end{lm}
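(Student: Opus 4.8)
The plan is to read the equivalence off the equivariance relation~\eqref{eq:dev-mon-rel} tying the monodromy--developing map $\Dv_\mu$ to the monodromy representation $\mu$, using two standard facts about projective structures: a developing map is always a local biholomorphism (an \'etale map to $\CC\PP^1$ restricting, on each small chart, to a projective chart of $\Ps$), and the monodromy representation is defined only up to conjugation in $\PSLC$, while the developing map on a fixed covering is determined by the structure up to post-composition with a M\"obius transformation. Write $\Gamma:=\mu(\pi_1(\mc{L}\setminus\Upsilon))\subseteq\PSLC$ for the monodromy group. Since $\ker\mu$ is normal, $\mc{L}_\mu$, viewed as a covering of the Riemann surface $\mc{L}\setminus\Upsilon$, is regular with deck group canonically isomorphic to $\pi_1(\mc{L}\setminus\Upsilon)/\ker\mu\cong\Gamma$; being the deck group of a covering, it acts freely and properly discontinuously on $\mc{L}_\mu$.

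For the direct implication, suppose $\Dv_\mu$ is injective. Being also a local biholomorphism, it is then a biholomorphism onto its image $\mathcal{U}:=\Dv_\mu(\mc{L}_\mu)$, an open subset of $\CC\PP^1$. By~\eqref{eq:dev-mon-rel} this biholomorphism intertwines the deck action on $\mc{L}_\mu$ with the action of $\Gamma$ by M\"obius transformations on $\mathcal{U}$; hence $\mathcal{U}$ is $\Gamma$-invariant and $\Gamma$ acts freely and properly discontinuously on it. A subgroup of $\PSLC$ acting properly discontinuously on a nonempty open subset of $\CC\PP^1$ cannot accumulate at the identity, so $\Gamma$ is discrete, i.e. Kleinian. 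Passing to quotients, $\Dv_\mu$ induces an isomorphism $\mc{L}\setminus\Upsilon\cong\mathcal{U}/\Gamma$, and, since $\Dv_\mu$ is a projective chart for $\Ps$, this isomorphism carries $\Ps$ to the projective structure that $\mathcal{U}/\Gamma$ inherits from $\CC\PP^1$.

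For the converse, assume $\mc{L}\setminus\Upsilon$ is isomorphic, as a projective surface, to $\mathcal{U}/G$, with $\mathcal{U}\subseteq\CC\PP^1$ open and invariant and $G$ Kleinian acting freely and properly discontinuously on $\mathcal{U}$. The holomorphic covering $\mathcal{U}\to\mathcal{U}/G$ corresponds to a normal subgroup $H\trianglelefteq\pi_1(\mc{L}\setminus\Upsilon)$ with $\pi_1(\mc{L}\setminus\Upsilon)/H\cong G$, and the inclusion $\mathcal{U}\hookrightarrow\CC\PP^1$ is a developing map for the projective structure on $\mathcal{U}/G$, with monodromy the composite $\pi_1(\mc{L}\setminus\Upsilon)\twoheadrightarrow G\hookrightarrow\PSLC$, whose kernel is $H$. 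Since the monodromy of $\Ps$ is defined up to conjugation, its kernel is canonically attached to $\Ps$; it therefore equals $H$, so the covering $\mathcal{U}\to\mc{L}\setminus\Upsilon$ is the monodromy covering $\mc{L}_\mu$, and under this identification $\Dv_\mu$ coincides, up to a M\"obius transformation, with the inclusion $\mathcal{U}\hookrightarrow\CC\PP^1$ — which is injective. Hence $\Ps$ is uniformizable.

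The proof is largely a translation through the covering-space/monodromy dictionary, so I expect the genuine content to be minor: the step to handle with care is the discreteness of $\Gamma$ — proper discontinuity on a nonempty open subset of $\CC\PP^1$ preventing an accumulation of group elements at the identity — together with the observation, immediate once one recalls that $\Dv_\mu$ is a projective chart, that the biholomorphisms produced respect the projective structures and not merely the complex ones. If one adopts the convention that $\mc{L}_\mu$ already contains the ramification points lying over the punctures of $\mc{L}$ that carry finite local monodromy, one inserts a short local step: near such a point $\mathcal{U}$ is a doubly connected domain in $\CC\PP^1$ one of whose two complementary components is a single point, so $\Dv_\mu$ extends holomorphically over the ramification point to that boundary point, and~\eqref{eq:dev-mon-rel} together with injectivity on the unramified covering forces distinct ramification points to be sent to distinct points of $\CC\PP^1\setminus\mathcal{U}$, so that injectivity persists.
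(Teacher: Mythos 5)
Your proof is correct and follows essentially the same route as the paper: the forward direction reads the quotient identification and the discreteness of the monodromy group off the equivariance relation once injectivity makes $\Dv_\mu$ a biholomorphism onto its image, and the converse is the standard construction of developing maps (which the paper simply cites and you spell out). The extra remarks on discreteness via proper discontinuity and on ramification points are consistent with, though not present in, the paper's terser argument.
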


\begin{proof}
Owing to the equivariance relation~(\ref{eq:dev-mon-rel}), the image $\mathcal{U} =
\Dv_{\mu} (\mc{L}_{\mu}) \subseteq \CC\PP^1$ of $\mc{L}_{\mu}$ by $\Dv_{\mu}$ is an open set
of $\CC\PP^1$ which is invariant by $\mu (\pi_1(\mc{L} \setminus \Upsilon))$, the
subgroup of $\PSLC$ obtained as image of the monodromy representation.
Now, if $\Dv_{\mu}: \mc{L}_{\mu} \to \CC\PP^1$ is injective,
then $\mathcal{U}$ is diffeomorphic to $\mc{L}_{\mu}$ which, in turn, induces a diffeomorphism
between the quotients of $\mc{L}_{\mu}$ and of $\mathcal{U}$ by the corresponding actions of
$\pi_1(\mc{L} \setminus \Upsilon)$. Hence, $\mc{L}$ is diffeomorphic to the
quotient of $\mathcal{U}$ by $\mu (\pi_1(\mc{L} \setminus \Upsilon))$. In particular,
$\mu (\pi_1(\mc{L} \setminus \Upsilon))$ must be a discrete subgroup of $\PSLC$. Also,
the projective structure $\Ps$ coincides with the evident projective structure on the
quotient of $\mathcal{U}$.

The converse, follows directly from the standard construction of developing maps.
\end{proof}

Since our purpose is to understand uniformizable singular projective structures,
it is convenient to parameterize
the space of all projective structures on a given Riemann surface orbifold. A standard parameterization
of these structures can be obtained by fixing a particular projective structure and then comparing all the other
projective structures with the fixed one.
This comparison between projective structures is quantified by the Schwarzian differential
whose definition is summarized as follows. First recall that the {\it Schwarzian derivative}\, of a holomorphic
function $f$ defined on an open set of $\CC$ is given by
\begin{equation}\label{eq:Schawrzian_def}
\Sch_{z}(f) = \lb \frac{f''(z)}{f'(z)} \rb' - \frac{1}{2} \lb \frac{f''(z)}{f'(z)} \rb^{2} \ .
\end{equation}
It is straightforward to check that the Schwarzian derivative satisfies the following invariance properties:
\begin{equation}\label{eq:Schawrzian_invar}
\Sch_{z}(f\circ\gamma)=\Sch_{\gamma(z)}(f)\gamma'(z)^{2} \qquad \text{and} \qquad \Sch_{z}(\gamma\circ f)=\Sch_{z}(f)
\end{equation}
for every M\"obius transformation $\gamma$ and every holomorphic function $f$.

Let then $\Ps_0$ be a fixed singular projective structure on $\mc{L}$ which is given by the atlas
$\mathcal{A}_0 = \{(U_i, \psi_i)\}$.
Consider also a second singular projective structure $\Ps$ on $\mc{L}$ whose atlas is denoted
by $\mathcal{A} = \{(V_j, \varphi_j)\}$. For every pair $(i,j)$ for which $U_i \cap V_i \ne \emptyset$, we consider the
composition map
\[
\varphi_j \circ \psi_i^{-1} :  \psi_i(U_i \cap V_j) \subset \CC \longrightarrow \CC
\]
which corresponds to a holomorphic function defined on a open set of $\CC$ with standard coordinate~$z$.
The Schwarzian derivative of the function $\varphi_j \circ \psi_i^{-1}$ will be denoted by $\xi_{ij}$
so that we have $\xi_{ij} = \Sch_{z} ( \varphi_j \circ \psi_i^{-1} )$.
Owing to the invariance properties of the Schwarzian derivative~(\ref{eq:Schawrzian_invar}), the resulting
collection of locally defined functions $\{ \xi_{ij} \}$ patch together as a global holomorphic
quadratic differential on $\mc{L} \setminus \Upsilon$ (or more generally on the complement of the union of the
singular sets of $\Ps_0$ and $\Ps$).
This quadratic differential will be denoted by $\omega = \xi (z) dz^2$ and referred to as the {\it Schwarzian differential}.

Alternatively, the Schwarzian differential can be defined as follows. Denote by $\widetilde{\mc{L}}$
the universal covering of $\mc{L} \setminus \Upsilon$ and let $\Dv$ be the resulting
developing map of $\Ps$. The atlas $\mathcal{A}_0 = \{(U_i, \psi_i)\}$ for $\Ps_0$
naturally induces an atlas
$\widetilde{\mathcal{A}}_0 = \{(\widetilde{U}_i, \widetilde{\psi}_i)\}$ for $\widetilde{\mc{L}}$.
Thus, for every open set
$\widetilde{U}_i$ of $\widetilde{\mathcal{A}}_0$ we may consider the map
$\Dv \circ \widetilde{\psi}^{-1}_i : \widetilde{\psi}_i(U_i) \subset \CC  \to \CC$.
Note that this map is holomorphic and defined on an open set of $\CC$. Also, it does not depend on
the choice of the lift of $U_i$.
The Schwarzian derivatives of these locally defined maps
can thus be taken and the resulting collection of locally defined functions again patch together as
a quadratic differential on $\mc{L} \setminus \Upsilon$ coinciding with the Schwarzian differential
$\omega$ as previously defined.

An immediate consequence of the above construction is that the Schwarzian differential $\omega$ vanishes
identically if and only if the projective structures $\Ps_0$ and $\Ps$ coincide. A converse also holds, if
$\omega'$ is a quadratic differential on $\mc{L} \setminus \Upsilon$, then by considering local solutions
for the Schwarzian differential equations in the coordinates of $\Ps_0$ (see for example Section~\ref{sec:mon_punc}),
a new singular projective structure $\Ps'$ on $\mc{L}$ is defined. In addition, $\omega'$ coincides
with the Schwarzian differential of $\Ps'$. Summarizing,
the choice of the initial $\Ps_0$
allows us to identify the set of (singular) projective structures on $\mc{L}$ with the space
of quadratic differentials on $\mc{L}$ (with suitable singular sets that the reader can easily work out).

To begin the approach to Theorem~A, let us consider the behavior of a uniformizable singular projective structure
$\Ps$ on $\mc{L}$ around a singular point $p \in \Upsilon \subset \mc{L}$.
For the time being we make no distinction between the cases where the point $p \in \mc{L}$ is a regular (smooth) point
or an orbifold
type singular point. Consider a small neighborhood $\mathcal{U} \subset \mc{L}$ of $p$. The restriction $\Ps|_{\mathcal{U}}$
of $\Ps$ to $\mathcal{U}$ is therefore uniformizable and we can assume without loss of generality that the
monodromy group associated to $\Ps|_{\mathcal{U}}$ has a unique generator $\gamma$ arising from
a small loop around $p$. Since $\mu (\pi_{1} (\mathcal{U} \setminus \{p\}))$ is a discrete
subgroup of $\PSLC$ (cf. Lemma~\ref{lm:uniformizable_vs_quotient}), $\gamma$ must be of one of the following types
\begin{itemize}
\item[(1)] Hyperbolic.
\item[(2)] Elliptic with finite order $k$.
\item[(3)] Parabolic.
\end{itemize}
However, the proof of Lemma~\ref{lm:uniformizable_vs_quotient} shows a bit more. In fact, it shows that
for $p$ to be an orbifold point, it has to be identified with a fixed point of an elliptic element $h$ of finite order.
Therefore, $\mathcal{U} \setminus \{p\}$ must be
given as the quotient of $\DD$ by $h$. Furthermore, since our projective structure is uniformizable,
it follows from the proof of Lemma~\ref{lm:uniformizable_vs_quotient} that $\mathcal{U} \setminus \{p\}$ is also the quotient
of an open set of $\CC\PP^1$, or equivalently of $\DD$, by the monodromy group
$\mu (\pi_{1} (\mathcal{U} \setminus \{p\}))$. In particular, $\gamma$ has a fixed point at $p$.
More precisely, $\gamma$ is the element defining the orbifold so that it is also
elliptic of order~$k$. Conversely, assume that the generator $\gamma$ of the monodromy group
is not of elliptic type. It then follows that the action of $\mu (\pi_{1} (\mathcal{U} \setminus \{p\}))$
$\DD$ is properly
discontinuous so that $\mathcal{U}$ must be smooth at $p$. We have then proved the following.

\begin{lm}\label{prp:condition_orbifold}
The point $p$ is an orbifold point if and only if the local monodromy map around $p$ is elliptic of finite order.\qed
\end{lm}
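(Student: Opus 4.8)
The plan is to extract everything from Lemma~\ref{lm:uniformizable_vs_quotient} and the analysis of its proof, after a preliminary localization around $p$. Shrinking to a sufficiently small neighborhood $\mathcal{U} \ni p$, the restriction $\Ps|_{\mathcal{U}}$ stays uniformizable (cf.\ the remark following Definition~\ref{def:uniformizable}), the group $\pi_1(\mathcal{U}\setminus\{p\})$ is infinite cyclic generated by the class of a small loop around $p$, and hence the monodromy group $G := \mu\bigl(\pi_1(\mathcal{U}\setminus\{p\})\bigr)$ is cyclic, say $G=\langle\gamma\rangle$. By Lemma~\ref{lm:uniformizable_vs_quotient} the group $G$ is discrete in $\PSLC$, so $\gamma$ is hyperbolic, parabolic, or elliptic of finite order; moreover the same lemma (and its proof) exhibits $\mathcal{U}\setminus\{p\}$ as the quotient $\mathcal{V}/G$ of the $G$-invariant open set $\mathcal{V}=\Dv_{\mu}(\mc{L}_\mu\cap \widetilde{\mathcal{U}})\subseteq\CC\PP^1$, and $\mathcal{V}$ is conformally a once-punctured disc (it is a connected covering of the punctured disc $\mathcal{U}\setminus\{p\}$ and the injective holomorphic map $\Dv_\mu$ extends across an isolated puncture).

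The key point is then to match the end of $\mc{L}_\mu\cap\widetilde{\mathcal{U}}$ lying over $p$ with a fixed point of $\gamma$. Indeed, if that end were carried by $\Dv_\mu$ to a boundary point of $\mathcal{V}$ not fixed by $\gamma$, the corresponding end of $\mathcal{V}/G$ would be an annular end of finite modulus rather than a puncture, contradicting the fact that $\mathcal{U}\setminus\{p\}$ is conformally a punctured disc; in particular the hyperbolic case does not arise here. So $p$ corresponds to a point $a\in\overline{\mathcal{V}}$ fixed by $\gamma$. If $\gamma$ is parabolic, then $a$ is its unique fixed point and $\gamma$ acts there with infinite order, so that $\mathcal{V}$ is, near $a$, a horoball-type region whose quotient fills $p$ in as a \emph{smooth} point (the standard cusp picture); in the degenerate case $\gamma=\mathrm{id}$ the map $\Dv_\mu$ descends to $\mathcal{U}$ itself and again $p$ is smooth. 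If instead $\gamma$ is elliptic of order $k\ge 2$, then near $a$ the set $\mathcal{V}\cup\{a\}$ is a genuine disc on which $\gamma$ is conjugate to $w\mapsto e^{2\pi i/k}w$, whence $\mathcal{U}=(\mathcal{V}\cup\{a\})/\langle\gamma\rangle$ has an orbifold point of order $k$ at $p$. This is precisely the content recorded, in condensed form, in the paragraph preceding the statement: $p$ is an orbifold point if and only if $\gamma$ is elliptic of finite order.

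I expect the only delicate step to be the one just described: proving that the injective monodromy-developing map on the punctured disc $\mathcal{U}_\mu$ extends holomorphically (as a $\CC\PP^1$-valued map) across the puncture, sending it to a fixed point of $\gamma$, and that this extension is compatible with the deck action so that passing to the quotient reproduces the orbifold chart of $\mc{L}$ near $p$. Once this is granted, the remaining verifications are routine bookkeeping with the three M\"obius conjugacy classes and their invariant domains.
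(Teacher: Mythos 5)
Your proof follows essentially the same route as the paper: localize around $p$, invoke Lemma~\ref{lm:uniformizable_vs_quotient} to present $\mathcal{U}\setminus\{p\}$ as the quotient of a $\gamma$-invariant open set $\mathcal{V}\subset\CC\PP^1$ by the discrete cyclic monodromy group, and then read off the nature of $p$ from the M\"obius type of $\gamma$ (non-elliptic gives a free, properly discontinuous action and hence a smooth point; elliptic of order $k\geq 2$ gives a quotient singularity of order $k$). Two small remarks. First, your parenthetical claim that $\mathcal{V}$ itself is conformally a once-punctured disc is only correct when $\gamma$ has finite order; when $\gamma$ is parabolic or hyperbolic the monodromy covering coincides with the universal covering of the punctured disc, so $\mathcal{V}$ is simply connected --- what is a punctured disc is the quotient $\mathcal{V}/G\cong\mathcal{U}\setminus\{p\}$, and that is all your subsequent end-matching argument actually uses, so the slip is harmless. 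Second, you do more than this lemma requires by excluding the hyperbolic case outright via the annular-end argument: for the present statement the paper only needs ``not elliptic $\Rightarrow$ free and properly discontinuous action $\Rightarrow$ $\mathcal{U}$ smooth at $p$,'' and it defers the exclusion of hyperbolic local monodromy to Lemma~\ref{lm:UPS_disc*_not-hyp}, whose proof (comparing lengths of short core curves in a hyperbolic cylinder versus a cusp) is a quantitative version of your qualitative observation that a hyperbolic quotient would produce an annular end of finite modulus rather than a puncture.
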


Let us close this section by showing that the monodromy map around a singular point cannot
be of hyperbolic type. This result is probably known to
experts and the argument given below is a minor adaptation from \cite{Guillot_Thesis}.

\begin{lm}\label{lm:UPS_disc*_not-hyp}
Let $\Gamma = \langle \gamma \rangle \subset \PSLC$ be the monodromy group of a
uniformizable projective structure $\Ps$ on the punctured disc $\DD^{\ast}$.
Then $\gamma$ is either an elliptic or a parabolic transformation.
\end{lm}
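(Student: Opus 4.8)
The plan is to argue by contradiction: suppose $\gamma$ is hyperbolic. Up to conjugation in $\PSLC$ we may assume $\gamma(w) = \lambda w$ for some real $\lambda > 1$. Let $\Dv_\mu \colon \DD^*_\mu \to \CC\PP^1$ be the injective monodromy-developing map on the monodromy covering of $\DD^*$. By Lemma~\ref{lm:uniformizable_vs_quotient}, the image $\mathcal{U} = \Dv_\mu(\DD^*_\mu)$ is an open $\gamma$-invariant subset of $\CC\PP^1$ on which $\Gamma = \langle\gamma\rangle$ acts properly discontinuously, and $\DD^* \cong \mathcal{U}/\Gamma$. Now $\ZZ \cong \pi_1(\DD^*)$, so the covering $\DD^*_\mu \to \DD^*$ has deck group a quotient of $\ZZ$; since $\Gamma \cong \ZZ$ acts faithfully on $\mathcal{U}$, the monodromy covering is in fact the $\ZZ$-covering and $\Dv_\mu$ realizes it as an isomorphism $\DD^*_\mu \xrightarrow{\sim} \mathcal{U}$.

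The contradiction will come from comparing the conformal type of $\mathcal{U}$ with that of $\DD^*_\mu$. On one hand, $\DD^*$ is uniformized by $\DD/\langle h\rangle$ with $h(w) = \lambda_0 w$ a hyperbolic element (the punctured disc being conformally an annulus, or — in the finite-area normalization used elsewhere in the paper — by a parabolic; either way its $\ZZ$-cover $\DD^*_\mu$ is conformally equivalent to an annulus or a punctured disc, in particular has finite topological type and at least one isolated puncture/end of finite modulus). On the other hand, $\mathcal{U} \subset \CC\PP^1$ is an open set invariant under the cyclic hyperbolic group generated by $w \mapsto \lambda w$, which has exactly two fixed points $0, \infty$ on $\CC\PP^1$. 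The key step is to examine the ends of $\mathcal{U}$: since $\Gamma$ acts properly discontinuously on $\mathcal{U}$ and $\mathcal{U}/\Gamma \cong \DD^*$ has only one isolated end (the puncture), the accumulation behavior of $\Gamma$-orbits forces $\mathcal{U}$ to avoid a neighborhood of exactly one of the two fixed points but to contain a punctured neighborhood of the other — and then the quotient by the hyperbolic $\gamma$ near that fixed point is an annulus of \emph{finite} modulus with a corresponding core geodesic, which is incompatible with the cusp/puncture structure of $\DD^*$: a punctured disc has no such closed geodesic of finite length shrinking to the puncture. More concretely, I would show that a hyperbolic cyclic quotient of a punctured-neighborhood-type domain produces a torus-like or annulus-with-two-boundary-components end rather than a puncture, contradicting $\mathcal{U}/\Gamma \cong \DD^*$.

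Alternatively — and this is probably the cleanest route, the one hinted at by the reference to Guillot's thesis — I would work directly with the Schwarzian. If the local monodromy around $0$ were hyperbolic with multiplier $\lambda$, then a local solution of the Schwarzian equation $S_z(f) = \xi$ behaves like $f(z) \sim z^\alpha$ with $\lambda = e^{2\pi i \alpha}$ and $\alpha \notin \RR$ (since $|\lambda| \neq 1$), which forces $\xi$ to have a double pole at $0$ with leading coefficient $\tfrac{1}{2}(1 - \alpha^2) z^{-2} + \cdots$, and one then checks the developing map $z \mapsto z^\alpha$ (times a unit) is \emph{not} injective on the $\ZZ$-covering: writing the covering coordinate as $\zeta$ with $z = e^\zeta$, the developing map becomes $\zeta \mapsto e^{\alpha\zeta}$, and since $\mathrm{Im}(\alpha) \neq 0$ this map is periodic along the direction $2\pi/\alpha$ in a half-plane — in particular it takes the same value at distinct points of the covering, violating injectivity of $\Dv_\mu$. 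I expect the main obstacle to be handling the contribution of the full (non-leading) part of $\xi$: one must show the higher-order terms of the Schwarzian do not conspire to restore injectivity, which is done by noting that the local projective structure near an isolated singular point is, after a projective change of chart, \emph{projectively equivalent} to its normal form $z\mapsto z^\alpha$ (the resonant/logarithmic corrections being killed by the hyperbolicity of $\lambda$), so the model computation is faithful; this normalization is exactly the place where the adaptation from \cite{Guillot_Thesis} does the real work.
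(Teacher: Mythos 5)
Your first argument is, in outline, the proof the paper itself gives: one shows that $\DD^{\ast}$ would have to be biholomorphic to the quotient of the simply connected domain $\mathcal{U}=\Dv_\mu(\DD)$ by the cyclic group generated by the loxodromic $\gamma$, that this quotient is a hyperbolic cylinder (a finite-modulus annulus carrying a closed geodesic), and that this is incompatible with $\DD^{\ast}$, which contains essential loops of arbitrarily small hyperbolic length; the paper derives the incompatibility from the Schwarz--Pick inequality applied to the induced biholomorphism $\DD^{\ast}\to\mathcal{U}/\Gamma$. Two of your intermediate assertions are, however, false as stated. Since $\pi_1(\DD^{\ast})\cong\ZZ$ and the monodromy is faithful, the monodromy covering \emph{is} the universal covering, so it is conformally the disc, not ``an annulus or a punctured disc''; correspondingly $\mathcal{U}$ is simply connected. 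And $\mathcal{U}$ cannot contain a punctured neighborhood of a fixed point of $\gamma$: the fixed point is excluded because the action is free, so such a neighborhood would violate simple connectivity. What genuinely needs justification --- and what your discussion of ends does not supply --- is why $\gamma$, restricted to the invariant simply connected domain $\mathcal{U}\cong\DD$, is an \emph{intrinsically} hyperbolic automorphism (finite-modulus quotient with a closed geodesic) rather than an intrinsically parabolic one (which would give a punctured disc and no contradiction). The paper essentially asserts this; one way to prove it is to note that $\mathcal{U}/\langle\gamma\rangle$ embeds as an essential annulus in the torus $(\CC\PP^1\setminus\{p_+,p_-\})/\langle\gamma\rangle$, which bounds its modulus from above.

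Your second, ``Schwarzian'' route has a genuine gap: it presupposes that $0$ is a regular singular point of the associated linear equation, i.e.\ that $\xi$ has at most a double pole there, so that the developing map admits the Fuchsian normal form $z\mapsto z^{\alpha}$ after a projective change of chart. Nothing in the hypotheses grants this: a priori $\xi$ may have a pole of arbitrary order or an essential singularity at $0$, and excluding such behavior is precisely the hard analytic content of the paper in the cases that do occur (cf.\ Lemma~\ref{lm:UPS_phi_triv_mon} and Lemma~\ref{xi_no-essentialsingularity}, which rest on Proposition~\ref{Key_proposition_oninjectivedevelopingmaps}). So the normalization you defer to is not available for free; the model computation with $\zeta\mapsto e^{\alpha\zeta}$ is correct and does exhibit non-injectivity when $\alpha\notin\RR$, but it only disposes of the Fuchsian case.
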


\begin{proof}
Assume aiming at a contradiction that $\gamma$ is hyperbolic. In particular the monodromy
covering of $\DD^{\ast}$ coincides with its universal covering $\DD$ since $\gamma$ has infinite order.
Since $\Ps$ is uniformizable, there follows that
the developing map $\Dv : \DD \longrightarrow \CC\PP^1$ is injective. Thus, $\Dv \, (\DD)\subset\CC\PP^1$
is conformally equivalent to the unit disc. Moreover, since $\Gamma$ acts freely on $\DD$ (as deck-transformations),
the equivariance of the developing map implies that the action of $\Gamma$ on $\CC\PP^1$ (given by the dynamics of $\gamma$)
leaves invariant $\Dv \, (\DD)$ and, in addition, is free when restricted to the $\Gamma$-invariant set
$\Dv \, (\DD)$. Hence, the two fixed points of the hyperbolic
element $\gamma$ must lie in the boundary of $\Dv(\DD)$.

Let us denote by $\mc{C}$ the quotient of $\Dv \, (\DD)$ by $\Gamma$, $\Dv \, (\DD) / \Gamma$, which
is a hyperbolic cylinder. Since $\Dv$ is injective and equivariant with respect to the two actions
of $\Gamma$, it induces a diffeomorphism $\Upsilon : \DD^{\ast}
\longrightarrow \mc{C}$. In particular, if $c$ is a loop representing the generator $[c]$
of the fundamental group of $\DD^{\ast}$, then $\Upsilon (c)$ is a loop in $\mc{C}$ representing
the generator $[\Upsilon (c)]$ of the fundamental group of $\mc{C}$.

Let $\DD^{\ast}$ be equipped with the Poincar\'e metric ${\it m}_{\DD}$ induced from its universal
covering $\DD$. Clearly, we can choose loops $c \in \DD^{\ast}$ winding once around $0 \in \DD$ and
having arbitrarily small length with respect to ${\it m}_{\DD}$. Similarly,
let $\mc{C}$ be endowed with its Poincar\'e metric ${\it m}_{\mc{C}}$ obtained by taking the quotient of the
disc by the action of the hyperbolic map $\gamma$. The length of $\Upsilon (c)$ with respect to ${\it m}_{\mc{C}}$ must
be bounded from below by a strictly positive number $\varepsilon
> 0$ since $\mc{C}$ is a hyperbolic cylinder.
However, since $\Upsilon$ is a holomorphic map, the pull-back $\Upsilon^{\ast}{\it m}_{\mc{C}}$
of the Poincar\'e metric ${\it m}_{\mc{C}}$ on $\mc{C}$
by $\Upsilon$ is a (hyperbolic) metric
on $\DD^{\ast}$ that is bounded by the previously introduced Poincar\'e metric ${\it m}_{\DD}$ on $\DD^{\ast}$
(cf. \cite{Ahlfors_book}). The desired contradiction follows at once from observing that the lenght of $c$
with respect to ${\it m}_{\DD}$ can be made arbitrarily small whereas its length with respect to
$\Upsilon^{\ast}{\it m}_{\mc{C}}$ is bounded from
below by $\varepsilon >0$.
\end{proof}

%
%


\section{Uniformizable projective structures with trivial monodromy}\label{sec:mon_punc}

According to Lemma~\ref{lm:UPS_disc*_not-hyp}, the monodromy map $\gamma$ of a uniformizable projective
structure $\Ps$ around a singular point $p \in \mc{L}$ is either an elliptic or a parabolic transformation.
These two cases will be discussed separately. This section is devoted to the elliptic case.

For $\Ps$ and $p$ as above, we assume in the sequel that the monodromy map $\gamma$ is
elliptic (necessarily having finite order~$k$). Lemma~\ref{prp:condition_orbifold} then ensures that
$p$ is an orbifold point and that a neighborhood $\mc{U}\subset\mc{L}$ of $p$, is isomorphic to the quotient of $(\CC ,0)$
by a rotation of angle $2\pi /k$. Let us first point out that this general situation can straightforwardly be reduced to the case
of trivial monodromy and smooth point. Indeed, note that the ramified $k$-sheet covering of a
neighborhood of $p$ in $\mc{L}$ is identified with a neighborhood $U$ of $0 \in \CC$. In particular,
the singular projective structure $\Ps$ can be lifted to a projective structure $\Ps^{(k)}$
on $U$ which is regular away from $0 \in U \subset \CC$. Also, by construction, the monodromy of
$\Ps^{(k)}$ is trivial so that the monodromy-developing map of $\Ps^{(k)}$ is defined on
$U \setminus \{ 0\}$ and hence coincides with the monodromy-developing map of $\Ps$ on a punctured
neighborhood of $p \in \mc{L}$. In particular, $\Ps^{(k)}$ is uniformizable on $U \setminus \{ 0\}$.

In view of the preceding, let us focus on uniformizable (singular) projective structures on the disc
$\DD$ which are regular on $\DD^{\ast}$ and whose monodromy is trivial. We choose as initial projective
structure $\Ps_0$ the (regular, uniformizable) projective structure induced by the standard coordinate $z \in \CC$
(making use of the evident inclusion $\DD^* \subset \CC$).
Given a regular uniformizable projective structure $\Ps$ on $\DD^{\ast}$ with trivial monodromy,
the coefficient $\xi$ of the Schwarzian
differential coincides with the Schwarzian derivative of the monodromy developing map $\Dv_\mu$ of $\Ps$ written
in the coordinate $z$. Clearly $\Dv_\mu$ is well defined on $\DD^{\ast}$ and satisfies
\begin{equation}\label{eq:phi}
\xi (z) = \Sch_{z} \Dv_\mu = \lb \frac{\Dv_\mu''(z)}{\Dv_\mu'(z)} \rb' - \frac{1}{2} \lb
\frac{\Dv_\mu''(z)}{\Dv_\mu'(z)} \rb^{2} \, .
\end{equation}
Moreover, we have the following:

\begin{lm}\label{lm:UPS_phi_triv_mon}
Let $\Ps$ be a uniformizable regular projective structure on $\DD^{\ast}$ and assume that its monodromy
is trivial. Then $0 \in \DD$ is not an
essential singularity for $\Dv_\mu$. More precisely, one of the following holds:
\begin{itemize}
\item[(1)] $\Dv_\mu$ is meromorphic on $\DD$ with a simple pole at $0\in\DD$.
\item[(2)] $\Dv_\mu$ is holomorphic on $\DD$ and $\Dv_\mu'(0)\neq 0$.
\end{itemize}
\end{lm}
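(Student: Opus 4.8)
The plan is to use the hypothesis of trivial monodromy to reduce the statement to a purely function-theoretic fact about injective holomorphic maps on the punctured disc. Indeed, since the monodromy is trivial, the monodromy covering of $\DD^\ast$ is $\DD^\ast$ itself, so uniformizability of $\Ps$ says exactly that $\Dv_\mu\colon\DD^\ast\to\CC\PP^1$ is an injective holomorphic map; it is therefore a biholomorphism onto its image $\Omega:=\Dv_\mu(\DD^\ast)$, an open connected subset of $\CC\PP^1$. The goal is to prove that $\Dv_\mu$ extends holomorphically across $0$ to a map $\DD\to\CC\PP^1$ that is a local biholomorphism at $0$; the dichotomy in the statement then just records whether the value of this extension at $0$ is $\infty$ or a point of $\CC$.

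The key step is the extension. Here I would argue that $\Omega$, being biholomorphic to the hyperbolic Riemann surface $\DD^\ast$, cannot be $\CC\PP^1$, $\CC$ or $\CC^\ast$; hence $\CC\PP^1\setminus\Omega$ contains at least three points, i.e.\ $\Dv_\mu$ omits at least three values in $\CC\PP^1$. Since the monodromy-developing map is canonical only up to post-composition with a M\"obius transformation, and such a change does not affect the Schwarzian differential in view of the invariance $\Sch_z(\gamma\circ f)=\Sch_z(f)$, I may assume that $\infty$ is one of the omitted values. Then $\Dv_\mu\colon\DD^\ast\to\CC$ is a holomorphic function omitting two finite values, so the Big Picard theorem shows that $0$ is not an essential singularity: $\Dv_\mu$ extends to a holomorphic map $\DD\to\CC\PP^1$, still denoted $\Dv_\mu$.

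It remains to analyse this extension at $0$. If $\Dv_\mu$ had a critical point of local multiplicity $m\geq 2$ at $0$, then, read in a holomorphic chart of $\CC\PP^1$ centred at $\Dv_\mu(0)$, a regular value close to but distinct from $\Dv_\mu(0)$ would have $m\geq 2$ preimages near $0$, all lying in $\DD^\ast$, which contradicts injectivity of $\Dv_\mu$ on $\DD^\ast$; hence $\Dv_\mu$ is a local biholomorphism at $0$. Comparing the images of a small neighbourhood of $0$ with those of a hypothetical second preimage of $\Dv_\mu(0)$ shows moreover that $\Dv_\mu$ is injective on all of $\DD$. Now if $\Dv_\mu(0)\in\CC$ then, near $0$, the extension is a $\CC$-valued holomorphic function with non-vanishing derivative, and together with the normalization that $\infty$ is omitted this yields alternative~(2). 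If instead $\Dv_\mu(0)=\infty$, then injectivity on $\DD$ forces the value $\infty$ to be attained only at $0$, so $\Dv_\mu$ is a genuine meromorphic function on $\DD$ whose unique pole is at $0$, and local biholomorphy at $0$ forces that pole to be simple, which is alternative~(1).

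The main obstacle is really the extension step, and it lies in seeing that injectivity of the developing map is precisely what forces the image $\Omega$ to be hyperbolic, hence to omit three values --- exactly the input needed for Big Picard. Everything after that is soft, using only the open mapping theorem / argument principle. A slightly more hands-on alternative to Big Picard would be to check directly that the cluster set of $\Dv_\mu$ at $0$ is disjoint from $\Omega$ and reduces to a single point, the cuspidal end of $\DD^\ast$ being forced to correspond to an isolated boundary point of $\Omega$ in $\CC\PP^1$; but invoking Big Picard keeps the argument shorter.
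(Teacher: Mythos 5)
Your proof is correct, and its overall skeleton matches the paper's: use triviality of the monodromy to see that $\Dv_\mu$ is injective and holomorphic on $\DD^{\ast}$, invoke a Picard-type theorem to exclude an essential singularity at $0$, and then use injectivity again to pin the local order down to $\pm 1$. The differences are in how each half is executed. For the Picard step, the paper simply cites Picard's theorem for the injective map $\Dv_\mu$, whereas you first observe that the image $\Omega \simeq \DD^{\ast}$ is hyperbolic, hence omits at least three points of $\CC\PP^1$, and then normalize by a M\"obius transformation so that $\infty$ is omitted before applying the Big Picard theorem; this is a more careful justification of the same conclusion and also handles cleanly the fact that $\Dv_\mu$ is a priori $\CC\PP^1$-valued. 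For the second step, the paper proves a separate Claim --- that $z\mapsto a z^{l}$ is injective on $\DD^{\ast}$, obtained as a nonconstant uniform limit of the injective rescaled maps $\Lambda^{-1}\Dv_\mu\Lambda$ via Hurwitz --- and concludes $l\in\{1,-1\}$, while you replace this by the standard local-degree count: a zero of multiplicity $m\geq 2$ of $\Dv_\mu - \Dv_\mu(0)$ (in a chart at $\Dv_\mu(0)$) would give nearby regular values $m$ distinct preimages in $\DD^{\ast}$, contradicting injectivity. Your version of this step is arguably more elementary and avoids the limit argument; the paper's rescaling trick, on the other hand, is the same device it reuses later (e.g.\ in Lemma~\ref{k_0isatworst-1}) and so fits its broader toolkit. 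Both routes are complete and yield the stated dichotomy.
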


\begin{proof}
Since the monodromy is trivial, $\Dv_\mu$ is by construction well defined and holomorphic on $\DD^{\ast}$.
Furthermore, $\Dv_\mu : \DD^{\ast} \rightarrow \CC$ is injective since $\Ps$ is uniformizable. In
particular, it follows from Picard Theorem that $0 \in \DD$ is not an essential singularity of $\Dv_\mu$.
Hence, $\Dv_\mu$ is (at worst) meromorphic on $\DD$ and can be written as
\begin{align*}
\Dv_\mu(z) = az^{l} + {\rm h.o.t.}
\end{align*}
for some $l \in \ZZ$ and $a \in \CC^{\ast}$. We can assume that $l \neq 0$, otherwise the constant term
``$a$'' can be eliminated by adding a translation to $\Dv_\mu$ which would yield an equivalent developing
map with strictly positive order at $0 \in \CC$. Next, we have:

\vspace{0.2cm}

\noindent {\it Claim}: The map $z \longmapsto az^{l}$, $l \neq 0$, is injective.

\begin{proof}[Proof of the claim]
Consider the map $\Lambda(z) = \lambda z$ for some $\lambda \in \CC^{\ast}$, which is clearly injective.
Since, $\Dv_\mu$ is injective on $\DD^{\ast}$, the conjugate $\Lambda^{-1} \Dv_\mu \Lambda$ is
also injective on $\DD^{\ast}$ so long $\lv \lambda \rv \leq 1$. However, we have
\begin{align*}
\Lambda^{-1} \Dv_\mu \Lambda(z) = \lambda^{-1} \Dv_\mu (\lambda z) =
\lambda^{-1+l} \lsb az^l\lb 1 + \lambda O(z) \rb \rsb \ .
\end{align*}
Hence for $\lambda \in \CC^{\ast}$ with $\lv \lambda \rv \leq 1$,
all the maps $z \longmapsto az^{l} (1 + \lambda O(z))$ are injective on $\DD^{\ast}$. Hence so is the map
$z \longmapsto az^{l}$ as a non-constant uniform limit of injective maps.
The claim is proved.
\end{proof}

Since $l \ne 0$, there follows from the above claim that $l$ is either $1$ or $-1$.
When $l = -1$, $\Dv_\mu$ is meromorphic with a simple pole at $0 \in \DD$. In turn, if $l = 1$,
then $\Dv_\mu$ is holomorphic and $\Dv_\mu'(0) \neq 0$. The lemma is proved.
\end{proof}

Proposition~\ref{prp:holomorphic_extension_trivial_mono} is a useful consequence of
Lemma~\ref{lm:UPS_phi_triv_mon}.

\begin{prp}\label{prp:holomorphic_extension_trivial_mono}
Let $\Ps$ be a projective structure as in Lemma~\ref{lm:UPS_phi_triv_mon}.
Then the coefficient $\xi$ of its Schwarzian differential
possesses a holomorphic extension to the entire disc $\DD$.
\end{prp}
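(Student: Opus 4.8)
The plan is to deduce the holomorphic extension of $\xi$ directly from the explicit description of $\Dv_\mu$ obtained in Lemma~\ref{lm:UPS_phi_triv_mon}. By that lemma, either $\Dv_\mu$ is holomorphic on $\DD$ with $\Dv_\mu'(0) \neq 0$, or $\Dv_\mu$ is meromorphic on $\DD$ with a simple pole at the origin. In the first case there is essentially nothing to do: since $\Dv_\mu$ is holomorphic and $\Dv_\mu' (0) \neq 0$, both $\Dv_\mu'$ and $\Dv_\mu''$ are holomorphic on $\DD$ and $\Dv_\mu'$ does not vanish near $0$, so the logarithmic derivative $\Dv_\mu''/\Dv_\mu'$ is holomorphic near $0 \in \DD$; plugging into~(\ref{eq:phi}) shows $\xi = \Sch_z \Dv_\mu$ is holomorphic across $0$, hence on all of $\DD$.

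The substantive case is when $\Dv_\mu$ has a simple pole at $0$. Here the cleanest route is to use the second invariance relation in~(\ref{eq:Schawrzian_invar}), namely $\Sch_z (\gamma \circ f) = \Sch_z (f)$ for every M\"obius $\gamma$. First I would write $\Dv_\mu (z) = a/z + b + \psi(z)$ with $a \in \CC^\ast$, $b \in \CC$ and $\psi$ holomorphic on $\DD$ vanishing at $0$ (at least of order $1$, in fact; but order $\geq 0$ with $\psi(0)$ absorbed into $b$ is all that is needed). Then compose with the M\"obius transformation $\gamma(w) = 1/(w - b)$, which sends $\Dv_\mu$ to $h(z) = 1/\psi(z) \cdot (\text{unit})$... more precisely $\gamma(\Dv_\mu(z)) = 1/(a/z + \psi(z)) = z/(a + z\psi(z))$. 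Since $a \neq 0$, the function $a + z\psi(z)$ is holomorphic on $\DD$ and nonvanishing at $0$, hence nonvanishing on a neighborhood of $0$; therefore $h(z) = z/(a + z\psi(z))$ is holomorphic near $0$ with $h(0) = 0$ and $h'(0) = 1/a \neq 0$. By the first (trivial) case applied to $h$, the Schwarzian $\Sch_z h$ is holomorphic near $0$, and by invariance $\Sch_z \Dv_\mu = \Sch_z (\gamma \circ \Dv_\mu) = \Sch_z h$, so $\xi$ extends holomorphically across $0 \in \DD$. Combined with holomorphy of $\xi$ on $\DD^\ast$ (which holds because $\Dv_\mu$ is a local biholomorphism there, being injective and holomorphic), this gives holomorphy on all of $\DD$.

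The only point requiring a modicum of care is to make sure the M\"obius transformation used in the pole case really does turn $\Dv_\mu$ into a function that is holomorphic and locally biholomorphic at $0$; this is exactly what the computation $\gamma(\Dv_\mu(z)) = z/(a + z\psi(z))$ achieves, and the crucial input is $a \neq 0$, which is guaranteed because the pole of $\Dv_\mu$ is simple. I do not anticipate any genuine obstacle here: everything reduces, via the M\"obius invariance of the Schwarzian, to the benign case of a function holomorphic with nonvanishing derivative, where $\Sch_z$ is manifestly holomorphic. One could equally well argue by a direct Laurent-expansion computation of $\Sch_z (a/z + \text{h.o.t.})$ and check that the polar terms cancel, but the invariance argument is shorter and conceptually transparent, so that is the route I would present.
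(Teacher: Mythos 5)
Your proof is correct, but it takes a different route from the paper's in the essential (simple pole) case. The paper argues by a direct Laurent-expansion computation: writing $\Dv_\mu(z) = a_{-1}z^{-1} + a_1 z + \cdots$ (after killing the constant term by a translation), it computes $\Dv_\mu''/\Dv_\mu' = -\frac{2}{z}(1+z^2A(z))$ and checks that the $2/z^2$ and $4/z^2$ polar contributions of $(\Dv_\mu''/\Dv_\mu')'$ and $\frac{1}{2}(\Dv_\mu''/\Dv_\mu')^2$ cancel in Formula~(\ref{eq:phi}). You instead invoke the M\"obius invariance $\Sch_z(\gamma\circ f)=\Sch_z(f)$ from~(\ref{eq:Schawrzian_invar}) to replace $\Dv_\mu$ by $h(z) = z/(a+z\psi(z))$, which is holomorphic with $h'(0)=1/a\neq 0$, thereby reducing the pole case to the trivial case; the identity $\Sch_z h = \Sch_z \Dv_\mu$ on a punctured neighborhood of $0$ then gives the extension. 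Your reduction is shorter and makes transparent \emph{why} the polar terms must cancel (the Schwarzian only sees $\Dv_\mu$ up to post-composition by M\"obius maps, and a simple pole is a removable artifact of the chart on $\CC\PP^1$), whereas the paper's computation has the minor advantage of producing the explicit form of the holomorphic extension, a template it reuses almost verbatim in the parabolic case (Lemma~\ref{order_phi_givenghasorderatleast-1}) where the extra logarithmic term genuinely contributes a residue and no such M\"obius reduction is available. Both arguments are complete; the one point you rightly flag --- that $a\neq 0$ because the pole is simple --- is indeed the only input needed for $h$ to be a local biholomorphism at $0$.
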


\begin{proof}
Owing to Lemma~\ref{lm:UPS_phi_triv_mon} and up to adding a suitable translation, the monodromy-developing
map $\Dv_{\mu}$ of $\Ps$ can be written in Laurent series as
\begin{align*}
\Dv_{\mu}(z) = a_{-1} z^{-1} + a_{1} z + a_2 z^2 + \cdots \ .
\end{align*}
for some constants $a_i \in \CC$ where at least one between $a_{-1}$ and $a_1$ is non-zero. In turn,
the coefficient $\xi$ of the Schwarzian differential is related to $\Dv_{\mu}$ by means of
Formula~(\ref{eq:phi}). In particular, when $\Dv_{\mu}$ is holomorphic at $0 \in \CC$, it follows at once
from Formula~(\ref{eq:phi}) that $\xi$ is holomorphic on all of $\DD$ as well.
It is therefore sufficient to discuss the case where $a_{-1} \ne 0$. Then a direct computation
provides
\[
\frac{\Dv_\mu''(z)}{\Dv_\mu'(z)} = \frac{2a_{-1} z^{-3} (1 + z^3 A_2(z))}{-a_{-1} z^{-2} (1 + z^2 A_1(z))}
= -\frac{2}{z}(1 + z^2A(z))
\]
for some holomorphic functions $A, A_1$ and $A_2$. It then follows that
\[
\left( \frac{\Dv_\mu''(z)}{\Dv_\mu'(z)} \right)' = \frac{2}{z^2} + G(z) \qquad \text{and} \qquad
\left( \frac{\Dv_\mu''(z)}{\Dv_\mu'(z)} \right)^2 = \frac{4}{z^2} + H(z)
\]
for some holomorphic functions $G$ and $H$. Now, Formula~(\ref{eq:phi}) ensures
that $\xi$ is holomorphic at the origin.
\end{proof}

To close this section, let us return to the initial problem of describing a uniformizable
singular projective structure $\Ps$ on a neighborhood of the orbifold point $p \in \mc{L}$.
Recall that the local structure of $\mc{L}$ around $p$ is isomorphic to the quotient
of the disc $\DD$ by a rotation of angle $2\pi /k$, $k \geq 2$. In particular, $\mc{L}$
can be equipped with the local ``orbifold-type'' coordinate $y$ defined by $y = \sqrt[k]{z} = \eta (z)$
where $z$ is the standard coordinate on $\DD \subset \CC$.

As previously seen, $\Ps$ lifts to a uniformizable (regular) projective structure $\Ps^{(k)}$ on $\DD^*$ with
trivial monodromy and
whose Schwarzian differential will be denoted by $\xi^{(k)} (y) dy^2$
(Proposition~\ref{prp:holomorphic_extension_trivial_mono}).
Denoting by $\xi_{\rm orb}(y) dy^2$ the Schwarzian differential of $\Ps$ in the singular (orbifold-type) coordinate $y$,
it follows that
\begin{equation}
\xi_{\rm orb} (y) = \xi^{(k)} (\eta^{-1}(y)) \lsb(\eta^{-1}(y))'\rsb^2 = k^2 y^{2k-2} \xi^{(k)}(y^k) \, .
\label{Schwarzian_orbifoldcoordinate}
\end{equation}

To summarize the discussion in this section, we state:

\begin{prp}\label{prp:generalorbifoldform_ellipticmono}
Consider a uniformizable singular projective structure $\Ps$ on the orbifold given by the quotient
of $\DD$ by a rotation of angle $2\pi /k$, $k \geq 2$. Then all of the following hold:
\begin{itemize}
	\item The monodromy of $\Ps$ is generated by an elliptic element of order~$k$.
	
	\item In the orbifold coordinate $y = \sqrt[k]{z}$, the Schwarzian differential of $\Ps$
	is given by Formula~(\ref{Schwarzian_orbifoldcoordinate}), where $\xi^{(k)}$ is holomorphic
	on all of $\DD$.
	
	\item Both the $L^{\infty}$ and the $L^1$ norms of $\Ps$ are locally bounded around the singular point.
\end{itemize}
\mbox{ }\qed
\end{prp}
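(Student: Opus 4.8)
The plan is to assemble the three assertions from results already proved in this section; no new ingredient is needed.

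\emph{First item.} By hypothesis the germ of $\mc{L}$ at $p$ is that of the quotient of $\DD$ by a rotation of order $k$, so $p$ is an orbifold point of order exactly $k$. Lemma~\ref{prp:condition_orbifold} then guarantees that the local monodromy $\gamma$ around $p$ is elliptic of finite order, while the discussion preceding that lemma — which combines uniformizability with the proof of Lemma~\ref{lm:uniformizable_vs_quotient} — shows that the monodromy-developing map identifies a punctured neighborhood of $p$ with the quotient of a $\langle\gamma\rangle$-invariant open subset of $\CC\PP^{1}$ by $\langle\gamma\rangle$. Matching this against the defining presentation of the orbifold forces $\gamma$ to be conjugate to the rotation of angle $2\pi/k$, and in particular $\gamma$ has order $k$. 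This is the first item.

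\emph{Second item.} Here I would pass to the ramified $k$-sheeted cover $U\cong(\CC,0)$ of a neighborhood of $p$, as done above. The lift $\Ps^{(k)}$ of $\Ps$ is regular on $U\setminus\{0\}$ and uniformizable there, and — the key point — its monodromy is trivial: the covering $U\setminus\{0\}\to\mc{U}\setminus\{p\}$ corresponds to the index-$k$ subgroup $k\ZZ$ of $\pi_{1}(\mc{U}\setminus\{p\})=\ZZ$, and $\mu$ restricted to $k\ZZ$ sends its generator to $\gamma^{k}=\mathrm{id}$. Thus $\Ps^{(k)}$ meets the hypotheses of Lemma~\ref{lm:UPS_phi_triv_mon}, and Proposition~\ref{prp:holomorphic_extension_trivial_mono} applies to give that the coefficient $\xi^{(k)}$ of its Schwarzian differential extends holomorphically across $0$, i.e. is holomorphic on all of $\DD$. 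Formula~(\ref{Schwarzian_orbifoldcoordinate}) is then simply the transformation law of the Schwarzian differential, a holomorphic quadratic differential away from $p$, under the (non-M\"obius) coordinate change relating the orbifold coordinate $y$ to the coordinate on $U$ — a routine computation.

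\emph{Third item.} Once $\xi^{(k)}$ is holomorphic, hence bounded on a neighborhood of $0$, the local boundedness of the two norms is essentially free, and it is cleanest to argue on the cover $U$: this cover is the orbifold universal covering of a neighborhood of $p$, the restriction to it of the hyperbolic metric of $\mc{L}$ is a smooth metric of strictly positive density near $0$, and the lifted Schwarzian differential has holomorphic coefficient there. Hence the ratio of $|\xi^{(k)}|$ to the square of that hyperbolic density stays bounded near $0$, giving local boundedness of $\Vert\cdot\Vert_{\infty}$, while $\int_{\{|z|<\varepsilon\}}|\xi^{(k)}(z)|\,|dz|^{2}<\infty$ gives local boundedness of $\Vert\cdot\Vert_{1}$; both then descend to $\mc{L}$ since the densities and measures involved are covering-compatible. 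Alternatively one reads this straight off Formula~(\ref{Schwarzian_orbifoldcoordinate}) in the coordinate $y$, the factor $y^{2k-2}$ in $\xi_{\mathrm{orb}}$ being exactly what compensates the cone-type degeneracy of the hyperbolic metric of $\mc{L}$ at $p$. The only genuine, and quite minor, point in the argument is precisely this interplay between the holomorphic extension provided by Proposition~\ref{prp:holomorphic_extension_trivial_mono} and the local shape of the hyperbolic metric near a cone point; the substantive work was carried out in Proposition~\ref{prp:holomorphic_extension_trivial_mono}, and the rest is bookkeeping.
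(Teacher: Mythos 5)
Your proposal is correct and follows essentially the same route as the paper: the proposition is stated there as a summary of the section's discussion (hence the immediate \qed), which consists precisely of Lemma~\ref{prp:condition_orbifold} for the first item, the passage to the ramified $k$-sheeted cover with trivial monodromy followed by Lemma~\ref{lm:UPS_phi_triv_mon}, Proposition~\ref{prp:holomorphic_extension_trivial_mono} and the transformation law~(\ref{Schwarzian_orbifoldcoordinate}) for the second, and the resulting local boundedness for the third. Your treatment of the third item merely spells out bookkeeping the paper leaves implicit.
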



\section{The parabolic case}\label{sec:mon_punc}

It remains to discuss the nature of a uniformizable projective structure $\Ps$ around a singular
point $p \in \mc{L}$ in the case where the local monodromy is a parabolic map.
Since Lemma~\ref{prp:condition_orbifold} ensures that $\mc{L}$ is smooth at $p$, we can once and for
all assume that $\Ps$ induces a uniformizable regular projective structure - still denoted by $\Ps$ - on
the punctured disc $\DD^{\ast} \subset \CC$ equipped with the standard coordinate $z \in \CC$.
The monodromy of $\Ps$ on $\DD^{\ast}$ is a parabolic map denoted by $\gamma$.

Taking advantage of the coordinate $z$ defined on all of $\DD$, the Schwarzian differential can be written as
$\xi (z) \, dz^2$. Also, the monodromy covering of $\DD^{\ast}$ coincides with its universal covering
and the corresponding developing map $\Dv_{\mu}$ locally satisfies the differential equation
\begin{equation}\label{eq:Schwarz_eqn1}
\xi (z) = \Sch_{z} \Dv_\mu \, .
\end{equation}
It is well known (cf.~\cite{Ince}) that two solutions of Equation~(\ref{eq:Schwarz_eqn1}) differ by
post-composition with a M\"obius transformation. Furthermore, any solution of this equation
is given as the quotient of two suitable linearly independent solutions of the linear ordinary
differential equation
\begin{equation}\label{eq:Schwarz_eqn2}
u''(z)+\frac{1}{2}\xi (z)u(z)=0 \ .
\end{equation}

Being a linear equation, the local solutions of~(\ref{eq:Schwarz_eqn2}) can be continued along paths. In particular,
the same holds for the solutions of Equation~(\ref{eq:Schwarz_eqn1}). Thus, every solution of~(\ref{eq:Schwarz_eqn1})
is globally defined on $\DD \setminus \RR_+$, where $\RR_+$ stands for the set of non-negative reals. Also, given
$x_0 \in \DD \cap \RR_+$ and
a local solution $\phi$ of~(\ref{eq:Schwarz_eqn1}),
consider the continuations of $\phi$ along the path $t \mapsto x_0 e^{ i t}$, $t \in (0,2\pi)$. This gives us two local solutions
around $x_0$, denoted respectively by $\phi_0$ and $\phi_1$ and, by construction, we have $\phi_1 = \gamma \circ \phi_0$.
However, since $\gamma$ is parabolic, we can assume without loss of generality that $\gamma (z)  = z + c$ for some
$c \in \CC^{\ast}$. Thus, we conclude that
$\lim_{t \rightarrow 0^+} \phi (x_0 e^{it}) - \lim_{t \rightarrow 2\pi^-} \phi (x_0 e^{it}) = c$.
Similar conclusions hold if $\RR_+$ is replaced by any semi-line issued from $0 \in \CC$. More precisely, given
$\theta \in [0, 2 \pi]$, let $l^{\theta}$ be defined by
\begin{equation}\label{sectors_with_ltheta}
l^{\theta} = \{ z \in \CC \; \, :  \; \; \; z=0 \; \; \; {\rm or} \; \; \; z=e^{a+i \theta} \; \; \; {\rm with} \; \; \; a \in \RR \} \, .
\end{equation}
Clearly for every $\theta$ as above, the solutions of~(\ref{eq:Schwarz_eqn1}) are globally defined on $\DD \setminus l^{\theta}$.
Furthermore, if $\phi$ is a local solution continued along a path $t \mapsto x_0 e^{i t}$, $t \in (\theta, \theta + 2\pi)$, then
we still have
\begin{equation}
\lim_{t \rightarrow \theta^+} \phi \lb x_0 e^{it}\rb \, \, \, - \lim_{t \rightarrow (\theta + 2\pi)^-} \phi \lb x_0 e^{it}\rb
= c \, .\label{difference_between_limits}
\end{equation}

%
%

Equation~(\ref{eq:Schwarz_eqn2}) is well known and, in general, it is not easy (possible) to find explicit solutions. Yet,
the standard method of ``variation of parameters'' yields the following:

\begin{lm}
\label{variation_of_constant}
Assume that $h : U \subset \CC \rightarrow \CC$ is a (non-zero) solution	of~(\ref{eq:Schwarz_eqn2}), where $U \subset \CC$ is a
simply connected domain. Then $\phi : U \rightarrow \CC$ defined by
\begin{equation}
\phi (z) = \int_{z_0}^z \frac{1}{h^2 (s)} ds + {\rm const} \label{firstformula_phi_outof_u}
\end{equation}
is a solution of Equation~(\ref{eq:Schwarz_eqn1}), where $z_0 \in U$ is a fixed base point and ${\rm const} \in \CC$.
\end{lm}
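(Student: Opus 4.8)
The plan is to verify directly that the function $\phi$ defined by Formula~(\ref{firstformula_phi_outof_u}) satisfies the Schwarzian equation~(\ref{eq:Schwarz_eqn1}), by computing its Schwarzian derivative and comparing with $\xi$. First I would record that, since $h$ is a non-vanishing holomorphic function on the simply connected domain $U$, the integrand $1/h^2$ is holomorphic on $U$, so $\phi$ is a well-defined holomorphic function on $U$ and $\phi'(z) = 1/h^2(z)$, which is nowhere zero. Hence $\phi$ is locally injective and $\Sch_z(\phi)$ makes sense. (The additive constant is irrelevant since the Schwarzian derivative is unchanged under adding a constant, indeed under any M\"obius post-composition by~(\ref{eq:Schawrzian_invar}).)

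The key computation is as follows. From $\phi' = h^{-2}$ one gets $\phi'' = -2 h^{-3} h'$, so that $\phi''/\phi' = -2 h'/h$. Differentiating,
\begin{equation*}
\left( \frac{\phi''}{\phi'} \right)' = -2 \left( \frac{h'}{h} \right)' = -2 \frac{h'' h - (h')^2}{h^2} = -2\frac{h''}{h} + 2 \left( \frac{h'}{h}\right)^2 ,
\end{equation*}
while $\tfrac12 (\phi''/\phi')^2 = 2 (h'/h)^2$. Subtracting as in~(\ref{eq:Schawrzian_def}) gives $\Sch_z(\phi) = -2 h''/h$. But $h$ solves Equation~(\ref{eq:Schwarz_eqn2}), i.e. $h'' = -\tfrac12 \xi\, h$, so $-2h''/h = \xi$, and therefore $\Sch_z(\phi) = \xi(z)$ on $U$. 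This is exactly Equation~(\ref{eq:Schwarz_eqn1}), so $\phi$ is a solution. Along the way one should note that the identity $-2 h''/h = \Sch_z(\phi)$ is precisely the classical link, already recalled just before the lemma, between solutions of the second-order linear equation~(\ref{eq:Schwarz_eqn2}) and solutions of the Schwarzian equation: if $h_1, h_2$ are independent solutions of~(\ref{eq:Schwarz_eqn2}) with Wronskian normalized so that $h_1 h_2' - h_1' h_2$ is constant, then $\phi = h_2/h_1$ has derivative a constant multiple of $1/h_1^2$, recovering~(\ref{firstformula_phi_outof_u}) with $h = h_1$.

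There is essentially no obstacle here: the statement is a routine verification by the variation-of-parameters formula, and the only points that require a word of care are the holomorphy and non-vanishing of $1/h^2$ on $U$ (guaranteed by $h$ being a nowhere-zero holomorphic solution on the simply connected $U$, so that $\phi$ is genuinely holomorphic and locally injective), and the bookkeeping in the Schwarzian computation. If one wished, one could alternatively produce a second independent solution $h_2$ of~(\ref{eq:Schwarz_eqn2}) via reduction of order, namely $h_2(z) = h(z) \int_{z_0}^z h^{-2}(s)\, ds$, and then invoke the already-recalled fact that quotients of independent solutions of~(\ref{eq:Schwarz_eqn2}) solve~(\ref{eq:Schwarz_eqn1}); but the direct computation above is shorter and self-contained.
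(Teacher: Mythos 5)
Your proof is correct, and the computation $\Sch_z(\phi)=-2h''/h=\xi$ is accurate. Your route, however, is not quite the one the paper takes. The paper argues in the direction ``formula first, then solution'': it posits a second solution of~(\ref{eq:Schwarz_eqn2}) of the form $u=\phi h$ (variation of parameters), observes that this forces the reduction-of-order equation $h\phi''+2h'\phi'=0$, solves it to get $\phi'=1/h^2$, and then concludes that $\phi=u/h$ solves~(\ref{eq:Schwarz_eqn1}) by invoking the classical fact, recalled just before the lemma, that quotients of independent solutions of~(\ref{eq:Schwarz_eqn2}) solve the Schwarzian equation. You instead take the formula~(\ref{firstformula_phi_outof_u}) as given and verify directly, from the definition~(\ref{eq:Schawrzian_def}), that its Schwarzian derivative equals $\xi$. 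Your version is more self-contained (it does not lean on the quotient-of-solutions fact, and in effect reproves it in this special case), while the paper's version explains where the formula comes from. This is essentially the alternative you yourself sketch in your closing paragraph, so the two arguments are interchangeable here. One small point of care in both arguments: the hypothesis ``non-zero solution'' should be read, as you do, as $h$ nowhere vanishing on $U$ (zeros of $h$ would make $1/h^2$ singular and $\phi$ merely meromorphic); neither your write-up nor the paper's loses anything by this reading, but it is worth stating explicitly, as you did.
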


\begin{proof}
The standard method of ``variation of parameters'' indicates the existence of a solution $u(z)$ for Equation~(\ref{eq:Schwarz_eqn2})
having the form $u(z) = \phi (z) h(z)$ for some non-constant function $\phi$. Up to finding such a solution
$u(z)$, it follows from the preceding that $\phi$ is a solution of Equation~(\ref{eq:Schwarz_eqn1}).
On the other hand, $u = \phi h$ is solution of~(\ref{eq:Schwarz_eqn2})
if and only if $\phi$ verifies $h \phi'' + 2h'\phi' =0$ which is a linear first order equation on $\phi'$. From this, it
promptly follows that
$$
\phi'(z) = \frac{1}{h^2(z)}\, .
$$
Equation~(\ref{firstformula_phi_outof_u}) follows at once.
\end{proof}

Since any solution $\phi$ of~(\ref{eq:Schwarz_eqn1}) is obtained as the quotient of two suitable solutions
of~(\ref{eq:Schwarz_eqn2}), we can assume that $\phi$ as in~(\ref{firstformula_phi_outof_u})
also satisfies~(\ref{difference_between_limits}). In other words, the function
\begin{equation}\label{function_gofx}
g(z) = \int_{z_0}^z \frac{1}{h^2 (s)} ds + {\rm const} - \frac{c}{2\pi i}  \int_{z_0}^z \frac{1}{s}ds =
\phi (z) - \frac{c}{2\pi i} \ln (z/z_0)
\end{equation}
is holomorphic on the punctured disc $\DD^{\ast}$. In fact, the multivalued character of $\phi$ represented by
Equation~(\ref{difference_between_limits}) cancels the multivaluedness of the logarithm to ensure that the analytic
extension of $g$ over the loop $t \mapsto x_0 e^{ i t}$, $t \in [0,2\pi]$ is well defined.

Now recalling that $\Ps$ is uniformizable on $\DD^{\ast}$, we conclude that the extension $\Dv_{\mu}$ of $\phi$ given
by~(\ref{firstformula_phi_outof_u}) on the universal covering $\DD$ of $\DD^{\ast}$ gives an
injective map from $\DD$ to $\CC\PP^1$.
Here the reader is reminded that the monodromy covering of $\DD^{\ast}$ coincides with its universal covering
since the monodromy representation of $\Ps$ arises from a parabolic M\"obius transformation.

The remainder of this section is devoted to deriving specific information on the behavior
of the Schwarzian differential $\omega=\xi dz^2$
from the fact that $\Dv_{\mu} : \DD \rightarrow \CC\PP^1$ is an injective map. As a first evident consequence of
the fact that $\Dv_{\mu}$ is injective, we see that
the restriction of $\phi$ to any domain of the form $\DD \setminus l^{\theta}$ is injective as well.
Nonetheless, we have the following:

\begin{prp}\label{Key_proposition_oninjectivedevelopingmaps}
Let $g$ and $\phi$ be as above. If the restrictions of $\phi$ to domains of the form $\DD \setminus l^{\theta}$
are injective then $g$ cannot have an essential singular point at $0 \in \CC$.
\end{prp}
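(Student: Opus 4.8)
The plan is to argue by contradiction: suppose $g$ has an essential singularity at $0$, and derive a violation of the injectivity of $\phi$ on slit domains $\DD \setminus l^{\theta}$. The key relation is
\[
\phi(z) = g(z) + \frac{c}{2\pi i} \ln(z/z_0),
\]
so that on any slit domain $\DD \setminus l^{\theta}$, where a branch of $\ln$ is fixed, $\phi$ differs from $g$ by a bounded-variation logarithmic term. The slow growth of $\ln$ compared with the wild oscillation forced by an essential singularity of $g$ should be what produces self-intersections.

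First I would make precise the "wild oscillation" coming from Casorati--Weierstrass / the big Picard theorem: on every punctured neighborhood of $0$, $g$ takes every value of $\CC$ (with at most one exception) infinitely often, and in fact does so on any sequence of circles $|z| = r_n \to 0$. The strategy is then to find two sequences of points $z_n, z_n' \to 0$, lying in a common slit domain $\DD \setminus l^{\theta}$ for a suitable fixed $\theta$, with $g(z_n) = g(z_n')$ but $z_n \neq z_n'$, and arrange that the logarithmic correction terms $\frac{c}{2\pi i}\ln(z_n/z_0)$ and $\frac{c}{2\pi i}\ln(z_n'/z_0)$ agree as well — or at least can be made to agree after a further small perturbation of one of the points, using again that $g$ is locally onto. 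Concretely, since $\arg$ of the logarithm only ranges over an interval of length $2\pi$ on a fixed slit domain while $-\log|z| \to +\infty$, one can try to pick $z_n$ and $z_n'$ on nearly the same ray (so the imaginary parts of the log terms nearly match) but at radii differing by a controlled factor, then use the surjectivity of $g$ on thin annuli to correct $g$-values; alternatively, fix $z_n$ with $|z_n'| = |z_n|$ on the same circle but different arguments, kill the resulting difference $\frac{c}{2\pi i}(\arg z_n - \arg z_n')$ in the correction term by instead slightly adjusting the radius of $z_n'$ and invoking that $g$ restricted to a short radial segment is an open map hitting a full neighborhood of $g(z_n)$.

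The cleanest route is probably the following. Fix any $\theta$ and work in $\DD \setminus l^{\theta}$. Choose a point $w_0$ in the (dense) set of values assumed by $g$ infinitely often near $0$, and a small disc $B$ around $w_0$ on which $g$ still surjects from arbitrarily small annuli. Pick $z_1$ close to $0$ with $g(z_1) = w_0$, then pick $z_2$ much closer to $0$, on essentially the same ray, with $g(z_2)$ a prescribed value in $B$; the freedom in choosing $g(z_2)$ lets me solve the single equation
\[
g(z_2) = g(z_1) + \frac{c}{2\pi i}\bigl(\ln(z_1/z_0) - \ln(z_2/z_0)\bigr),
\]
whose right-hand side lies in $B$ once $z_1$ is chosen so $w_0$ is near the center and $z_2$ is on the same ray (making the log difference real and small relative to the size of $B$ — here one uses that $|z_2| \ll |z_1|$ only contributes a bounded real shift that can be absorbed by first shrinking everything, OR one instead keeps $|z_1/z_2|$ bounded and iterates). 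Then $\phi(z_1) = \phi(z_2)$ with $z_1 \neq z_2$ both in $\DD \setminus l^{\theta}$, contradicting injectivity.

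The main obstacle I anticipate is the bookkeeping in the last step: the logarithmic term is unbounded as $z \to 0$, so naively "$g$ surjects onto a neighborhood" is not enough — one must control the size of the required correction against a fixed disc $B$ in the target. The right fix is to exploit that $g$ is onto (minus one point) on \emph{every} annulus $\{r' < |z| < r\}$ near $0$, and to choose the two radii $|z_1|, |z_2|$ with \emph{bounded ratio}, so the log difference stays in a fixed compact set; then one can take $B$ large enough (or rescale) to contain the needed values. Iterating the ratio-bounded argument along a sequence of shrinking annuli then produces the required sequences $z_n \neq z_n'$ with $\phi(z_n) = \phi(z_n')$, all within one slit domain, for the contradiction. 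I would also need the minor observation that the single exceptional Picard value of $g$ does not obstruct the construction, since we only need $g$ to hit the values in an open disc $B$, and we may always choose $B$ avoiding that one point.
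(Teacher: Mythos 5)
There is a genuine gap at the heart of the construction, and it is not the bookkeeping issue you flag. The equation you need to solve,
\[
g(z_2) \;=\; g(z_1) + \frac{c}{2\pi i}\bigl(\ln(z_1/z_0)-\ln(z_2/z_0)\bigr),
\]
has a right-hand side that depends on the unknown $z_2$. Knowing that $g$ maps a thin (slit) annulus onto a large disc $B$ only lets you solve $g(z_2)=u$ for a \emph{fixed} target $u\in B$; it does not let you solve $g(z_2)=v(z_2)$ for a target that moves with $z_2$. Rewriting the equation as $\phi(z_2)=\phi(z_1)$ makes the circularity visible: what you actually need is that $\phi$ (not $g$) re-takes a value on the slit domain, i.e.\ a Picard/Casorati--Weierstrass-type statement for the multivalued function $\phi=g+\tfrac{c}{2\pi i}\ln(z/z_0)$ restricted to $\DD\setminus l^{\theta}$ --- and no such classical theorem is available, since slit domains are not punctured neighborhoods (a function can have an essential singularity at $0$ and still be bounded, or omit large sets, on a sector). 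This same point undercuts the preliminary step: great Picard gives that $g$ assumes all values but one infinitely often in every punctured \emph{neighborhood}; it does not give surjectivity of $g$ onto $\CC$ minus a point from every thin annulus $\{r/2<|z|<r\}$, nor from every slit annulus, which is what your ``ratio-bounded'' iteration requires. So both the oscillation input and the matching step would need substantially more than what Casorati--Weierstrass/Picard supply.

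For comparison, the paper avoids attacking $g$ directly. It first proves (Proposition~\ref{thereislimitforphi}) that $\lim_{z\to 0}\phi(z)$ exists on slit domains, via a renormalization scheme: the maps $\xi^k=\phi\circ\Lambda^{\circ k}$ on a fixed slit annulus are injective with pairwise disjoint images, hence form a normal family whose convergent subsequences are constant (a Hurwitz-type argument); this forces the set of accumulation values of $\phi$ at $0$ to be countable, and a connectedness argument (two distinct accumulation values would produce a continuum of them) reduces it to a single point. Only then does it pass to the single-valued function $\exp(2\pi i\phi/c)=(z/z_0)\exp(2\pi i g/c)$ and invoke Casorati--Weierstrass to conclude that $g$ is not essential. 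The injectivity hypothesis enters there through disjointness of images and normality, which is exactly the leverage your proposal is missing. If you want to salvage your line of attack, you would in effect have to prove a value-repetition theorem for $\phi$ on slit domains first, which is essentially Proposition~\ref{thereislimitforphi} in disguise.
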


The proof of Proposition~\ref{Key_proposition_oninjectivedevelopingmaps} will be deferred to Section~\ref{sec:Appendix}.
The remainder of the present section is devoted to providing a detailed description of the Schwarzian differential
$\omega =\xi dz^2$ assuming throughout the sequel that $\Ps$ is uniformizable on $\DD^{\ast}$. First we have:

\begin{lm}
\label{xi_no-essentialsingularity}
Let $\Ps$ be a uniformizable projective structure on $\DD^{\ast}$ with parabolic monodromy. Then, its Schwarzian differential
$\omega =\xi dz^2$ cannot have an essential singularity at $0 \in \CC$.
\end{lm}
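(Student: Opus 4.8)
The plan is to reduce the statement to Proposition~\ref{Key_proposition_oninjectivedevelopingmaps} together with the elementary observation that the Schwarzian derivative of a function of the form ``meromorphic function $+$ constant multiple of $\ln z$'' is again meromorphic. I keep the notation of the section: $\phi = \Dv_{\mu}$ is the monodromy-developing map, injective on the universal covering $\DD$ of $\DD^{\ast}$ because $\Ps$ is uniformizable, and $g$ is the function holomorphic on $\DD^{\ast}$ defined in~(\ref{function_gofx}), so that $\phi(z) = g(z) + \frac{c}{2\pi i}\ln(z/z_0)$ on each simply connected subdomain of $\DD^{\ast}$.

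First I would note that injectivity of $\phi$ on $\DD$ forces the restriction of $\phi$ to every domain of the form $\DD \setminus l^{\theta}$ to be injective, so the hypothesis of Proposition~\ref{Key_proposition_oninjectivedevelopingmaps} holds and $g$ has no essential singularity at $0$; equivalently, $g$ extends meromorphically across $0 \in \CC$. Differentiating $\phi = g + \frac{c}{2\pi i}\ln(z/z_0)$ gives $\phi'(z) = g'(z) + \frac{c}{2\pi i z}$, which is meromorphic at $0$ since $g'$ is; one more differentiation shows $\phi''$ is meromorphic at $0$ as well. As $\phi$ is injective, hence non-constant, $\phi' \not\equiv 0$, so $\phi''/\phi'$ is a genuine meromorphic function near $0$, and therefore so are $(\phi''/\phi')'$ and $(\phi''/\phi')^{2}$. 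By~(\ref{eq:Schwarz_eqn1}) and the definition~(\ref{eq:Schawrzian_def}) of the Schwarzian, $\xi = \Sch_{z}\phi = (\phi''/\phi')' - \frac{1}{2}(\phi''/\phi')^{2}$ is then meromorphic at $0$; in particular $\xi$ has at worst a pole there, and not an essential singularity.

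I do not expect any real obstacle in this argument once Proposition~\ref{Key_proposition_oninjectivedevelopingmaps} is in hand: the genuine content is that proposition, whose proof is the delicate point and is deferred to Section~\ref{sec:Appendix}. The only care needed here is bookkeeping — for instance, $\phi$ may take the value $\infty$ (that is, $g$ may have poles), but this is harmless near $0$, where $\xi$ is holomorphic on the punctured disc by construction and the identity $\xi = (\phi''/\phi')' - \frac{1}{2}(\phi''/\phi')^{2}$ continues to make sense as an identity of meromorphic functions.
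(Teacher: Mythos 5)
Your argument is correct, and it rests on exactly the same key input as the paper's proof, namely Proposition~\ref{Key_proposition_oninjectivedevelopingmaps} giving the meromorphy of $g$ at the origin; but from that point on your route is genuinely different. The paper goes back to the linear equation~(\ref{eq:Schwarz_eqn2}) and the solution $h$ with $\phi'=1/h^2$: from $g'$ meromorphic it deduces that $h^2$ is meromorphic, assigns to $h$ (which, being a square root of a meromorphic function, need not itself be meromorphic) a well-defined order $\kappa$ that is an integer or a half-integer, does the same for $h''$, and then reads off $|\xi(z)|=O(|z|^{\kappa'-\kappa})$ from $\xi=-2h''/h$, concluding via Casorati--Weierstrass. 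You instead differentiate $\phi=g+\frac{c}{2\pi i}\ln(z/z_0)$ directly, observe that $\phi'$ and $\phi''$ are single-valued and meromorphic at $0$, and conclude that $\xi=(\phi''/\phi')'-\frac12(\phi''/\phi')^2$ is meromorphic there; this is essentially the computation the paper only performs afterwards, in Lemma~\ref{order_phi_givenghasorderatleast-1}, where it is pushed further to extract the exact order and residue. Your version is the more economical for the statement at hand: it stays inside the field of meromorphic functions, avoids the half-integer order bookkeeping for $h$, and needs no appeal to Casorati--Weierstrass. What the paper's detour through $h$ buys is mainly coherence with the variation-of-parameters setup already in place; for this particular lemma it yields nothing you do not also obtain. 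The one point worth stating explicitly in your write-up is that $\phi'$ and $\phi''$ are single-valued on the punctured disc (the monodromy of $\phi$ is the translation $w\mapsto w+c$, which differentiates away), so that ``meromorphic at $0$'' is meaningful for them; you use this implicitly and it is true, but it deserves a sentence.
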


\begin{proof}
Keeping the preceding notation, consider the function $g$ defined by~(\ref{function_gofx}). In view of
Proposition~\ref{Key_proposition_oninjectivedevelopingmaps}, $g$ is meromorphic (possibly holomorphic) on $\DD$.
In particular, its derivative $g'$ is meromorphic on $\DD$ what, in turn, informs us that $h^2$ is meromorphic
as well. Though $h$ does not need to be meromorphic as the square root of a meromorphic function, it does have a well-defined
order at $0 \in \CC$. More precisely, there is $\kappa \in \RR$ such that
$$
\lim_{z \rightarrow 0} \vert h (z) \vert = O (\vert z \vert^{\kappa}) \, ,
$$
where $O (\vert z \vert^{\kappa})$ means that the left side is bounded - from below and by above - by suitable constant multiples
of $\vert z \vert^{\kappa}$. Also,
$\kappa$ is such that $\kappa - [\kappa] \in \{ 0,1/2\}$, where the brackets $[\, \cdots \,]$ stand for the integral
part, i.e., $\kappa$ is either an integer or half an integer. The same applies to its second derivative $h''$ which satisfies
$$
\lim_{z \rightarrow 0} \vert h'' (z) \vert = O (\vert z \vert^{\kappa'}) \,
$$
for a suitable $\kappa' \in \RR$. Equation~(\ref{eq:Schwarz_eqn2}) then implies that
$$
\lim_{z \rightarrow 0} \vert \xi (z) \vert =  O (\vert z \vert^{\kappa' - \kappa} ) \, .
$$
In view of Picard theorem (actually the more elementary Casorati-Weierstrass theorem), the last limit implies that $\xi$ cannot have
an essential singular point at $0 \in \CC$. The lemma is proved.
\end{proof}

Next, let us investigate what can be said about the order of $\phi$ at~$0 \in \CC$. For this, we let $\phi (z)=
g(z) + c \ln (z/z_0) /2\pi i$. Since $g$ is meromorphic, we can set
$$
g(z) = \sum_{k = k_0}^{\infty} a_k z^k
$$
for some $k_0 \in \ZZ$ and $a_{k_0} \neq 0$. Now, similar to Lemma~\ref{lm:UPS_phi_triv_mon}, the following holds:

\begin{lm}\label{k_0isatworst-1}
We have $k_0 \geq -1$ provided that $\Ps$ is uniformizable.
\end{lm}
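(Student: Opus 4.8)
The plan is to run a rescaling argument in the spirit of the Claim inside the proof of Lemma~\ref{lm:UPS_phi_triv_mon}, the genuinely new feature being that one works with the restrictions of $\phi$ to slit domains $\DD\setminus l^{\theta}$ — on which $\phi$ is single-valued and, by uniformizability of $\Ps$, injective — and that one rescales along \emph{real} dilations so as to keep such a slit domain invariant. So, aiming at a contradiction, suppose $k_{0}\leq -2$ and fix the slit domain $\DD\setminus\RR_{+}$, on which $\phi$ is injective. For $\lambda\in(0,1)$ the dilation $\Lambda_{\lambda}(z)=\lambda z$ maps $\DD\setminus\RR_{+}$ into itself, so $\phi\circ\Lambda_{\lambda}$ is injective on $\DD\setminus\RR_{+}$; moreover, since multiplication by a positive real leaves arguments unchanged, the logarithm in the formula for $\phi$ contributes only through $\frac{c}{2\pi i}\ln z$ together with a constant, so that
\[
\phi(\lambda z)\;=\;\lambda^{k_{0}}\bigl(a_{k_{0}}z^{k_{0}}+a_{k_{0}+1}\lambda z^{k_{0}+1}+a_{k_{0}+2}\lambda^{2}z^{k_{0}+2}+\cdots\bigr)\;+\;\frac{c}{2\pi i}\ln z\;+\;C_{\lambda}
\]
for some constant $C_{\lambda}\in\CC$.

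Next I would post-compose with the affine map $N_{\lambda}(\zeta)=(\zeta-C_{\lambda})/(a_{k_{0}}\lambda^{k_{0}})$; since $a_{k_{0}}\neq 0$ and $\lambda\neq 0$ this is an automorphism of $\CC$, hence a M\"obius transformation of $\CC\PP^{1}$, and in particular it preserves injectivity. Writing $\Phi_{\lambda}=N_{\lambda}\circ\phi\circ\Lambda_{\lambda}$ one gets
\[
\Phi_{\lambda}(z)\;=\;z^{k_{0}}\;+\;\sum_{j\geq 1}\frac{a_{k_{0}+j}}{a_{k_{0}}}\,\lambda^{j}\,z^{k_{0}+j}\;+\;\frac{c}{2\pi i\,a_{k_{0}}\,\lambda^{k_{0}}}\,\ln z,
\]
a holomorphic injection on $\DD\setminus\RR_{+}$ for every $\lambda\in(0,1)$. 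Because $k_{0}<0$ one has $1/\lambda^{k_{0}}=\lambda^{|k_{0}|}\to 0$ as $\lambda\to 0^{+}$, so the logarithmic term disappears in the limit; and, using absolute convergence of the Laurent series of $g$ on $\{0<|z|<1\}$, a routine estimate shows that $\Phi_{\lambda}\to z^{k_{0}}$ uniformly on compact subsets of $\DD\setminus\RR_{+}$.

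To conclude, Hurwitz's theorem applied to the univalent maps $\Phi_{\lambda}$ forces their locally uniform limit $z\mapsto z^{k_{0}}$ to be either constant or injective; it is non-constant since $k_{0}\neq 0$, hence injective on $\DD\setminus\RR_{+}$. But this is impossible when $|k_{0}|\geq 2$: restricted to the arc $\{re^{i\theta}:\theta\in(0,2\pi)\}\subset\DD\setminus\RR_{+}$ the map $z\mapsto z^{k_{0}}$ winds $|k_{0}|\geq 2$ times around the origin, so it is not injective there; explicitly, $z_{1}=re^{i\pi/2}$ and $z_{2}=re^{i(\pi/2+2\pi/|k_{0}|)}$ are distinct points of $\DD\setminus\RR_{+}$ with $z_{1}^{k_{0}}=z_{2}^{k_{0}}$. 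This contradiction yields $k_{0}\geq -1$.

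The one step demanding some care is the normalization performed in the second paragraph: the summand $\frac{c}{2\pi i}\ln z$ is \emph{not} homogeneous under $z\mapsto\lambda z$, and the scheme works precisely because $N_{\lambda}$ — which has to be affine, i.e.\ M\"obius, if injectivity is to be preserved — renormalizes the leading monomial to $z^{k_{0}}$ while at the same time dividing the logarithmic term by the divergent factor $\lambda^{k_{0}}$, so that the limit is the \emph{pure} monomial $z^{k_{0}}$, whose lack of univalence for $|k_{0}|\geq 2$ is what closes the argument. Note that the bound obtained is sharp for this method, as $z\mapsto z^{-1}$ is globally univalent and the case $k_{0}=-1$ is therefore not ruled out here.
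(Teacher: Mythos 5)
Your argument is correct. Restricting to the slit domain $\DD\setminus\RR_{+}$ (where $\phi$ is single-valued and injective), the real dilations $\Lambda_{\lambda}$ do preserve the slit and the branch of $\ln$, the affine renormalization $N_{\lambda}$ preserves injectivity, and the computation showing $\Phi_{\lambda}\to z^{k_{0}}$ locally uniformly is sound: the tail of the Laurent series contributes $O(\lambda)$ uniformly on compacta of the slit disc, and the logarithmic term is killed by the factor $\lambda^{|k_{0}|}$ precisely because $k_{0}<0$. Hurwitz's theorem for univalent functions (the very statement from page 5 of \cite{Duren_book} that the paper itself invokes in Lemma~\ref{lm:seq_to_cte}) then forces the non-constant limit $z^{k_{0}}$ to be univalent on $\DD\setminus\RR_{+}$, which fails exactly when $|k_{0}|\geq 2$, and your explicit pair of points witnessing the failure is valid.

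This is, however, a genuinely different route from the paper's. The paper argues directly and quantitatively: it picks the two points $\epsilon$ and $\lambda\epsilon$ with $\lambda$ a $|k_{0}|$-th root of unity, observes that the leading monomials cancel so that $|\phi(\epsilon)-\phi(\lambda\epsilon)|=O(|\epsilon|^{k_{0}+1})$, and then uses Koebe's quarter theorem to show that $\phi$ maps disjoint balls of radius $\epsilon/2$ about these points onto sets each containing a ball of radius $O(|\epsilon|^{k_{0}})$ about the respective images; since $|\epsilon|^{k_{0}+1}\ll|\epsilon|^{k_{0}}$, the images overlap, contradicting injectivity. Both proofs exploit the same geometric fact — the $|k_{0}|$-to-one character of the dominant term $a_{k_{0}}z^{k_{0}}$ — but the paper's version is a pointwise distortion estimate, whereas yours is a blow-up/normal-families argument closer in spirit to the rescaling scheme the paper deploys later in Section~\ref{sec:Appendix}. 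The Koebe argument is more elementary and entirely local; your renormalization argument is arguably cleaner conceptually (the limit object $z^{k_{0}}$ makes the obstruction visible at a glance), at the modest cost of having to justify the locally uniform convergence and of fixing one slit rather than letting $\theta$ vary. Your closing remark that $k_{0}=-1$ survives because $z\mapsto z^{-1}$ is univalent is also consistent with Lemma~\ref{lm:UPS_phi_triv_mon}.
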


\begin{proof}
Assume for a contradiction that $k_0 \leq -2$ and choose a determination of $\ln$ on the set $\DD \setminus l^{3\pi/2}$.
We will show that $\phi$ cannot be injective on $\DD \setminus l^{3\pi/2}$
hence contradicting the condition that $\Ps$ is uniformizable.

To begin, we have
\begin{eqnarray*}
\phi (z) & = &  z^{k_0} \lsb (a_{k_0} + a_{k_0+1} z + {\rm h.o.t.} ) + \frac{c}{2\pi i} z^{-k_0} \ln (z/z_0) \rsb \\
&  = & a_{k_0} z^{k_0}
+ z^{k_0+1} \lsb (a_{k_0+1}  + {\rm h.o.t.}) + \frac{c}{2\pi i} z^{-k_0- 1}\ln (z/z_0) \rsb \\
& = & a_{k_0} z^{k_0}  + z^{k_0+1} \Psi_1(z) \, ,
\end{eqnarray*}
where $\Psi_1$ is holomorphic on $\DD \setminus l^{3\pi/2}$.
Since $-k_0 -1\geq 1$ and $\lim_{z \rightarrow 0} \vert z  \ln z \vert =0$,
there follows that the function $z \mapsto \vert \Psi_1 (z) \vert$
is uniformly bounded by a constant $C_1$ on $\DD \setminus l^{3\pi/2}$. On the other hand,
$\phi'(z) = g'(z) \; + \; c/2\pi iz$ so that we similarly obtain
$$
\phi'(z) = k_0 a_{k_0} z^{k_0 -1} + z^{k_0} \Psi_2 (z)
$$	
where $\Psi_2$ is, in fact, holomorphic on all of $\DD$. In particular $\Psi_2$ is bounded by some constant $C_2$.
Also, both $\vert \phi' (\epsilon) \vert$ and $\vert \phi' (\lambda \epsilon) \vert$ are $O (\vert \epsilon^{k_0 -1} \vert)$,
where $\lambda$ is the $\vert k_0 \vert$-root of the unity with smallest real part
while $\epsilon > 0$ is sufficiently small.
Let $B_1$ and $B_2$ denote the balls of (same) radius~$\epsilon /2$ respectively centered at $\epsilon$ and at $\lambda \epsilon$.
Clearly these balls are disjoint and contained in $\DD \setminus l^{3\pi/2}$. Since $\phi$ is injective on $\DD \setminus l^{3\pi/2}$,
the contradiction proving Lemma~\ref{k_0isatworst-1} arises at once from
the following claim:

\noindent{\it Claim}. The intersection $\phi (B_1) \cap \phi (B_2)$ is not empty.

\noindent {\it Proof of the Claim}. First we note that the distance between $\phi (\epsilon)$ and $\phi (\lambda \epsilon)$ satisfies
$$
\vert \phi (\epsilon) - \phi (\lambda \epsilon) \vert \leq \vert \epsilon^{k_0+1} \Psi_1(\epsilon) - (\lambda\epsilon)^{k_0+1}
\Psi_1 (\lambda \epsilon) \vert \leq 2 C_1 \vert \epsilon^{k_0 +1} \vert \, .
$$
On the other hand, Koebe's quarter theorem \cite{Duren_book} implies that $\phi (B_1)$ contains a ball centered at
$\phi (\epsilon)$ of radius
$$
\frac{1}{4} \frac{\vert \epsilon \vert}{2} \vert \phi' (\epsilon) \vert = \frac{1}{8} \vert \epsilon \vert
O (\vert \epsilon^{k_0-1} \vert ) = O (\vert \epsilon^{k_0} \vert ) \, .
$$
Analogously we check that $\phi (B_2)$ also contains a ball centered at
$\phi (\lambda \epsilon)$ of radius $O (\vert \epsilon^{k_0} \vert )$. Since the distance from $\phi (\epsilon)$ to
$\phi(\lambda \epsilon)$ is bounded by a constant times $\vert \epsilon^{k_0 +1} \vert$ ($k_0$ being strictly negative), there follows that
$\phi (B_1) \cap \phi (B_2) \neq \emptyset$ provided that $\epsilon$ is sufficiently small. This establishes the claim and
completes the proof of Lemma~\ref{k_0isatworst-1}.
\end{proof}

Next, recalling that $\phi$ locally coincides with the developing map $\Dv_{\mu}$, there follows that the equation
$\xi = \Sch_{z} \phi$ holds on $\DD \setminus l^{3\pi/2}$. Furthermore $\phi$ is given by
$c \ln (z/z_0) /2\pi i + g(z) = c \ln (z/z_0) /2\pi i + \sum_{k=-1}^{\infty} a_k z^k$. Thus, we obtain:

\begin{lm}\label{order_phi_givenghasorderatleast-1}
	The order of $\xi$ at $0 \in \CC$ is greater than or equal to~$-1$.
\end{lm}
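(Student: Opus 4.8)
The plan is to prove the bound by a direct Laurent computation of $\Sch_z\phi$, starting from the normal form obtained just before the statement: on the slit disc $\DD\setminus l^{3\pi/2}$ one has $\xi=\Sch_z\phi$ with
$$
\phi(z)=\frac{c}{2\pi i}\ln(z/z_0)+g(z),\qquad g(z)=\sum_{k\geq-1}a_kz^{k},
$$
where $k_0\geq-1$ by Lemma~\ref{k_0isatworst-1}; by Lemma~\ref{xi_no-essentialsingularity}, $\xi$ is moreover meromorphic on $\DD^{\ast}$, hence single-valued, so it suffices to estimate the order of this one local expression at $0$. We treat the case $a_{-1}\neq0$ (i.e.\ $k_0=-1$), which is the case relevant to Theorem~A.

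The first step is to pass to the logarithmic derivative $\phi''/\phi'$, which is single-valued near $0$. Indeed $\phi'(z)=c/(2\pi i z)+g'(z)$ is meromorphic at $0$ --- the logarithm only contributes the simple pole $c/(2\pi i z)$ --- and, since $a_{-1}\neq0$, its leading term is $\phi'\sim -a_{-1}z^{-2}$; in particular $\phi'$ has a double pole and is zero-free on a punctured neighbourhood of $0$, and $\phi''\sim 2a_{-1}z^{-3}$. Dividing and expanding,
$$
\frac{\phi''(z)}{\phi'(z)}=-\frac{2}{z}+b(z),\qquad b\ \text{holomorphic at}\ 0,
$$
with $b(0)=-c/(2\pi i\,a_{-1})$ after a short computation.

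The decisive step is the cancellation of the double pole in the Schwarzian. From the last display, $(\phi''/\phi')'=2/z^{2}+b'(z)$ contributes $2$ to the $z^{-2}$-coefficient, while $(\phi''/\phi')^{2}=4/z^{2}-4b(0)/z+\cdots$ contributes $4$ to it; hence in $\xi=(\phi''/\phi')'-\tfrac12(\phi''/\phi')^{2}$ the $z^{-2}$-terms cancel, giving $2-\tfrac12\cdot4=0$. Therefore $\xi$ has no double pole at $0$, i.e.\ the order of $\xi$ at $0$ is at least $-1$, which is the assertion. (Tracking the $z^{-1}$-terms as well yields the residue $-2b(0)=-c/(a_{-1}\pi i)$, so this lemma is exactly the first half of Theorem~A.)

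The only genuinely delicate point, which I expect to be the main obstacle, is the bookkeeping ensuring that $\phi''/\phi'$ has a pole of order exactly one --- equivalently, that $\phi'$ has a definite pole (and no zero) near $0$. That is where the concrete shape of $g$, together with $k_0\geq-1$, is actually used; once it is in place, the rest is the short computation above, which is the precise analogue --- now with an extra logarithmic term in $\phi$ --- of the trivial-monodromy computation in the proof of Proposition~\ref{prp:holomorphic_extension_trivial_mono}.
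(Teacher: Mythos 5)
Your proof is correct and follows essentially the same route as the paper's: both factor out the leading terms of $\phi'$ and $\phi''$ to obtain $\phi''/\phi' = -2/z + b(z)$ with $b$ holomorphic and then observe the cancellation $2-\tfrac12\cdot 4=0$ of the $z^{-2}$ terms in the Schwarzian, and like the paper you set aside the case $a_{-1}=0$. The only blemish is a sign slip in your parenthetical aside: with your normalization the residue equals $2b(0)$, not $-2b(0)$ (the value $-c/(a_{-1}\pi i)$ you then state is nevertheless the correct one and agrees with Theorem~A).
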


\begin{proof}
The proof is a direct computation similar to the proof of Lemma~\ref{prp:holomorphic_extension_trivial_mono}.
Owing to Lemma~\ref{k_0isatworst-1}, $g(z) = \sum_{k=-1}^{\infty} a_k z^k$. Since
$\phi =  c \ln (z/z_0) /2\pi i + \sum_{k=-1}^{\infty} a_k z^k$, it follows that
$$
\phi'(z) = \frac{c}{2\pi iz} + \sum_{k=-1}^{\infty} ka_kz^{k-1} = - \frac{a_{-1}}{z^2}+ \frac{c}{2\pi iz} \; + \; a_1
+ \sum_{k=2}^{\infty} ka_k z^{k-1}
$$
and
$$
\phi''(z) = \frac{2a_{-1}}{z^3} - \frac{c}{2\pi iz^2} + \sum_{k=2}^{\infty}k(k-1) a_k z^{k-2} \, .
$$
It suffices to treat the case $a_{-1}\neq 0$ since the computations become actually simpler for $a_{-1} = 0$.
For $a_{-1}\neq 0$, we have
$$
\phi'(z) = -\frac{a_{-1}}{z^2}\lb1 \;-\; \frac{cz}{2a_{-1}\pi i}+ z^2 h_1 (z)\rb
\;\;\;\; \text{ and } \;\;\;\;
\phi''(z) = \frac{2a_{-1}}{z^3} \lb 1 - \frac{cz}{4a_{-1}\pi i}  +z^3 h_2 (z)\rb
$$
for suitable holomorphic functions $h_1$ and $h_2$. Also, there exists $h_3$ holomorphic such that
$$
(1\;-\; \frac{cz}{2a_{-1}\pi i}+ z^2 h_1 (z))^{-1} = (1 \;+\; \frac{cz}{2a_{-1}\pi i} + z^2 h_3 (z)) \, .
$$
Hence,
\begin{equation}\label{goodequation-finding-xi}
\frac{\phi'' (z)}{\phi' (z)} = \frac{-2}{z} \lb 1 - \frac{cz}{4a_{-1}\pi i}   +z^3 h_2 (z) \rb \lb 1
\;+\; \frac{cz}{2a_{-1}\pi i}+ z^2 h_3 (z)\rb \, .
\end{equation}
Finally, from Equation~(\ref{goodequation-finding-xi}), we conclude that the quotient $\phi''(z)/\phi'(z)$ equals
$-2/z + h_4 (z)$, where $h_4$ is holomorphic. Hence
$$
\lb \frac{\phi''(z)}{\phi'(z)} \rb' = \frac{2}{z^2} + h_4' (z)
$$
where $h_4'$ is holomorphic. In turn,
\begin{eqnarray*}
\lb \frac{\phi''(z)}{\phi'(z)} \rb^2 & = & \frac{4}{z^2} \lb 1  \;-\; \frac{cz}{2a_{-1}\pi i} + z^2 h_5 (z) \rb \lb 1
\;+\; \frac{cz}{a_{-1}\pi i} + z^2 h_6 (z) \rb \\
& = & 	\frac{4}{z^2} \;+\; \frac{2c}{a_{-1}\pi i z} + h_7 (z) \, ,
\end{eqnarray*}
where $h_5, h_6, h_7$ are all holomorphic functions. Thus, in view of Formula~(\ref{eq:phi}), we conclude that
$$
\xi (z) = \Sch_{z} \phi = -\frac{c}{a_{-1}\pi i z}+ h_8 (z)
$$
for a suitable holomorphic function  $h_8$. This proves the lemma.
\end{proof}

We are now ready to derive Theorem~A and Corollary~B.

\begin{proof}[Proof of Theorem A and of Corollary B]
Consider a uniformizable singular projective structure $\Ps$ on a Riemann surface orbifold $\mc{L}$. Let $p \in \mc{L}$
be a singular point of $\Ps$ and denote by $\gamma \in \PSLR$ the local monodromy of $\Ps$ arising from a small loop
around $p$. According to Lemma~\ref{lm:UPS_disc*_not-hyp}, $\gamma$ cannot be hyperbolic. On the other hand, if $\gamma$ is elliptic,
it must be of finite order. In this case, and only in this case, $p$ is a singular point of orbifold type. The local structure
of $\Ps$ around $p$ is described by Proposition~\ref{prp:generalorbifoldform_ellipticmono}. Finally, if $\gamma$ is parabolic,
the corresponding statement in Theorem~A follows from Lemma~\ref{order_phi_givenghasorderatleast-1} along with its proof.

As to Corollary~B. The local finiteness of the norms $L^1$ and $L^{\infty}$ follows again from Proposition~\ref{prp:generalorbifoldform_ellipticmono}
in the case of elliptic monodromy. In turn, when the monodromy is parabolic, the statement is a consequence
of Lemma~\ref{order_phi_givenghasorderatleast-1}. This completes the proof of Corollary~B.
\end{proof}

\section{Proof of Proposition~\ref{Key_proposition_oninjectivedevelopingmaps}}\label{sec:Appendix}

The method to be employed in this section to prove Proposition~\ref{Key_proposition_oninjectivedevelopingmaps}
seems to be rather flexible and can be used to handle other similar situations. As an exemple, we state
Proposition~\ref{prp:inject_ess_sing} (see Remark~\ref{alternateproposition}) that also appears in problems about vector fields
having univalued solutions, see for example \cite{adolfoandI} and its references.
Also, let us point out that
Proposition~\ref{Key_proposition_oninjectivedevelopingmaps} is a special case of a nice conjecture put forward
in \cite{elsner}: it might be interesting to check how far our method can be pushed to provide insight in
Elsner's conjecture.

We fix a domain $\DD \setminus l^{\theta} \subset \CC$ where a branch of logarithm is defined. Also we have a holomorphic function
$g$ defined on $\DD^{\ast}$ and a multivalued function $\phi$ on $\DD^{\ast}$ which is defined by
\begin{equation}
\phi = g + \frac{c}{2\pi i} \ln (z/z_0)  \label{monodromy_for_phi-andln}
\end{equation}
for some $c \in \CC$ and $z_0 \in \DD^{\ast}$. Upon restriction to $\DD \setminus l^{\theta}$, all these
functions become well defined and $\phi$ is assumed to be injective for every fixed value of $\theta$. Clearly, as $\theta$ varies,
both $\phi$ and the branch of the logarithm can be continued in such way
that Equation~(\ref{monodromy_for_phi-andln}) will still hold for every value of $\theta$.
In other words, for every value of $\theta$ the map $\phi$ on $\DD \setminus l^{\theta}$ defined by a suitable choice of
a branch of logarithm satisfies~(\ref{difference_between_limits}) and is injective on $\DD \setminus l^{\theta}$.

Proposition~\ref{Key_proposition_oninjectivedevelopingmaps} will follow from the following proposition:

\begin{prp}\label{thereislimitforphi}
Let $\phi$ and $g$ be as above and assume that $\phi$ is injective on domains of the form $\DD \setminus l^{\theta}$
(cf. Proposition~\ref{Key_proposition_oninjectivedevelopingmaps}). Then
there exists the limit
$$
\lim_{z \rightarrow 0} \phi (z)
$$
with $z \in \DD \setminus l^{\theta}$.
\end{prp}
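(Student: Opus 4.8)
The plan is to reduce the assertion to a problem about univalent functions in a half-plane and then establish the limit by contradiction, the main tools being Koebe's distortion theorem and the quasi-periodicity encoded in~\eqref{monodromy_for_phi-andln}. First I would pass to the logarithmic coordinate $w$ with $z=e^{2\pi i w}$, so that $\DD^{\ast}$ lifts to $\HH=\{\operatorname{Im}w>0\}$, the puncture corresponds to $\operatorname{Im}w\to+\infty$, each slit domain $\DD\setminus l^{\theta}$ lifts to a vertical strip of width $1$, and $\phi$ becomes a holomorphic function $F$ on $\HH$ that is injective on every vertical strip of width $1$ and satisfies $F(w+1)=F(w)+c$; correspondingly $g$ becomes the periodic part $F(w)-cw$. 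From~\eqref{difference_between_limits} one sees at once that when $c\neq0$ the only value that $\lim_{z\to0}\phi(z)$ could possibly take in $\CC\PP^1$ is $\infty$, since two approaches to $0$ straddling the slit yield boundary values differing by the fixed nonzero constant $c$ (when $c=0$ the statement is just Picard's theorem, exactly as in Lemma~\ref{lm:UPS_phi_triv_mon}). So it suffices to prove that $F(w)\to\infty$ as $\operatorname{Im}w\to+\infty$.

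Suppose this fails; then after passing to a subsequence there is $w_n$ with $\operatorname{Im}w_n\to+\infty$ and $F(w_n)\to L\in\CC$. For $n$ large $F$ is univalent on $B(w_n,1/2)$, so by Koebe's quarter theorem (cf.~\cite{Duren_book}) its image contains $B\!\big(F(w_n),\tfrac18|F'(w_n)|\big)$; moreover $F'(w_n+k)=F'(w_n)$ and $F(w_n+k)=F(w_n)+kc$ for every $k\in\ZZ$, and the segment $[w_n,w_n+1]$ maps onto a curve of length at least $|c|$, which via Koebe distortion on a width-$1$ strip forces $|F'(w_n)|\ge\varepsilon_0>0$ with $\varepsilon_0$ independent of $n$. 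Thus $F$ carries the discs $B(w_n+k,1/2)$, $k\in\ZZ$, onto sets each containing a fixed-radius disc $B(L+kc,\delta)$ about the colinear points $L+kc$, and one then plays $\delta$ against $|c|$: if $\delta>|c|/2$ consecutive image discs overlap, and untangling the quasi-periodicity inside a single width-$1$ strip produces two distinct points with equal $F$-value, contradicting injectivity; if $\delta\le|c|/2$ one reruns the estimate along the scales $\operatorname{Im}w_n\to\infty$ and analyses the simply connected image $\Omega=F\big(\{\,H<\operatorname{Re}w<H+1\,\}\cap\HH\big)$, whose boundary near the ideal end consists of an arc $\beta$ together with its translate $\beta+c$; showing that the prime end of $\Omega$ at that end is degenerate rules out two distinct cluster values of $F$ there, which in turn forces $g$ to have at most a pole at $0$ (the local models being $w\mapsto ae^{\pm2\pi i w}$, just as $z\mapsto az^{\pm1}$ in the Claim inside Lemma~\ref{lm:UPS_phi_triv_mon}), whence $F(w)=cw+g(\cdot)\to\infty$ — the desired contradiction. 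An alternative handle on the local behaviour, which also seems useful in related questions, is to notice that $\Xi:=\exp(2\pi i F/c)$ has matching boundary values across the slit and therefore, after the substitution $z\mapsto z^{2}$, descends to a single-valued holomorphic function on a punctured disc.

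The step I expect to be the genuine obstacle is the last one, and it is precisely the point of contact with Elsner's conjecture: injectivity is assumed only on strips of width $1$, while the monodromy relates $F(w)$ and $F(w+1)=F(w)+c$ at points that never lie in a common width-$1$ strip, so one cannot simply upgrade to injectivity on wider strips — the contradiction has to be extracted from the interplay of Koebe's distortion theorem with the translation structure $\beta\mapsto\beta+c$ of $\partial\Omega$. I also note that in the situation actually needed later in the paper the developing map is injective on the whole universal covering $\HH$; then $\Omega=F(\HH)$ is a genuine simply connected domain, invariant under $z\mapsto z+c$, whose quotient is isomorphic to $\DD^{\ast}$, its end $\operatorname{Im}w\to\infty$ is the cuspidal end, and this cuspidal end is exhausted by half-planes escaping to $\infty\in\CC\PP^1$, so that $F(w)\to\infty$ follows immediately.
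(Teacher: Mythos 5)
Your reduction to the half-plane picture and your preliminary observations are fine (in particular, the remark that a finite limit is impossible when $c\neq 0$ because $g$ is single-valued on $\DD^{\ast}$ is correct), but the proof has a genuine gap exactly where you say it does: the case you label ``$\delta\le \lv c\rv /2$'' is not closed. At that point you invoke ``showing that the prime end of $\Omega$ at that end is degenerate rules out two distinct cluster values of $F$ there,'' but the degeneracy of that prime end -- equivalently, the fact that the cluster set of $F$ at the ideal end is a single point -- \emph{is} the statement to be proved; no mechanism is supplied for extracting it from the Koebe estimates, and your closing paragraph concedes that this is ``the genuine obstacle.'' The difficulty is real: injectivity is only available on width-$1$ strips, so the translated discs $B(w_n+k,1/2)$ live in different strips and their overlapping images do not directly contradict anything. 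As written, the argument establishes a uniform lower bound $\lv F'(w_n)\rv\ge\varepsilon_0$ and disjointness of certain image discs, but never derives a contradiction from the assumption $F(w_n)\to L\in\CC$.

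The paper closes precisely this gap by a different mechanism, which you may find worth internalizing since it avoids Koebe entirely at this stage. One introduces the contraction $\Lambda(z)=rz/R$ and the rescaled maps $\xi^k=\phi\circ\Lambda^{\circ k}$ on a truncated annulus $A^t_{(r,R,\theta)}$; these are injective with pairwise disjoint images all avoiding a fixed open set, hence form a normal family (Lemma~\ref{lm:normal_family}), and the disjointness of the images forces every convergent subsequence to converge to a \emph{constant} (Lemma~\ref{lm:seq_to_cte}). Consequently the cluster set $\mc{P}_0$ of $\phi$ at $0$ is countable (Lemmas~\ref{lm:Ac'_countable} and~\ref{lm:Ac_countable}). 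On the other hand, if $\mc{P}_0$ contained two distinct points $p$ and $q$, joining the corresponding sequences by paths $l_n\subset V$ shrinking to the origin and intersecting $\phi(l_n)$ with the circles $\partial B_q(\epsilon)$ for $\epsilon$ ranging in a small interval produces a continuum of cluster values -- contradicting countability. So $\mc{P}_0$ is a singleton and the limit exists. Your Koebe-based setup might still be salvageable along the lines of Elsner's conjecture, but as submitted the decisive step is missing; I would either adopt the normal-family/countability argument or supply a complete proof that the cluster set at the end of $\Omega$ reduces to a point.
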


\begin{proof}[Proof of Proposition~\ref{Key_proposition_oninjectivedevelopingmaps}]
From Equation~(\ref{monodromy_for_phi-andln}), we have $2\pi i \phi /c = 2\pi i g/c + \ln (z/z_0)$. By taking the exponential on
both sides, we conclude that
$$
\exp \left( \frac{2\pi i \phi (z)}{c} \right) = \frac{z}{z_0} \exp \left( \frac{2 \pi i g(z)}{c} \right) \, .
$$
In particular, $\exp (2 \pi i \phi (z)/c)$ is holomorphic on a punctured neighborhood of $0 \in \CC$. However,
Proposition~\ref{thereislimitforphi} ensures that
$z e^{2 \pi i g(z)/c}$ has a limit as $z \rightarrow 0$ so that Casorati-Weierstrass theorem shows that $g$ cannot have an essential
singularity at $0 \in \CC$.
\end{proof}

The remainder of this section is devoted to the proof of Proposition~\ref{thereislimitforphi}.
Given real numbers $r, R$, such that $0 < r < R < 1$, the annulus of radii $r$ and $R$ around $0 \in \CC$ will be denoted by
$A_{(r,R)}$.

Next, we have that $\phi$ is injective on $\DD \setminus l^{\theta}$ and this still holds as $\theta$ varies and $\phi$ is
suitably continued, as previously indicated. Let now $\theta$ be fixed and choose $t >0$ arbitrarily small. Set
\[
A_{(r,R, \theta)}^t = \lcb \rho e^{i\alpha} : \, \, \rho, \alpha \in \RR \;, \;  r < \rho < R \; , \; \theta + t < \alpha < \theta + 2\pi - t \rcb \ .
\]
In other words, $A_{(r,R, \theta)}^t$ consists of those points in $A_{(r,R)}$ whose argument does not lie in $[\theta - t , \theta +t]$
(modulo $2\pi$). In particular, for $R$ small enough, we have $A_{(r,R, \theta)}^t \subset \DD \setminus l^{\theta}$.
Now let $\Lambda : \CC \longrightarrow \CC$
denote the map defined by $\Lambda (z) = rz/R$. The positive iterates of $\Lambda$ form the family $\{\Lambda^{\circ k}\}_{k \in \NN}$
and it is clear that $\Lambda(A_{(r,R, \theta)}^t) = A_{(r^2/R,r, \theta)}^t$. Although $A^{t}_{(r,R, \theta)}$ and
$\Lambda(A^t_{(r,R, \theta)})$ are disjoint sets, their closures have non-empty intersection. More precisely
$\overline{A_{(r,R,\theta)}^t} \cap \overline{\Lambda(A_{(r,R,\theta)}^t)}$ consists of those complex numbers having modulus~$r$
and argument in the interval $[\theta + t, \theta + 2\pi -t]$. Furthermore
\[
\overline{A_{(r,R,\theta)}^t} \cup \overline{\Lambda(A_{(r,R,\theta)}^t)} = \overline{A_{(r^2/R,R,\theta)}^t} \, .
\]
More generally, one has $\Lambda^{\circ k} (A_{(r,R,\theta)}^t) = A_{(r^{k+1}/R^k,r^k/R^{k-1},\theta)}^t$. Summarizing,
the family of pairwise disjoint open sets $\{A_{(r^{k+1}/R^k,r^k/R^{k-1},\theta)}^t\}_{k \in \NN_0}$ is such that
the union of their closures $\{\overline{A_{(r^{k+1}/R^k,r^k/R^{k-1},\theta)}^t}\}_{k \in \NN_0}$
yields a sector of angle $2\pi - 2t$ with vertex at $0 \in \CC$. In fact, the following holds:
\[
\bigcup_{k \geq 0} \overline{\Lambda^{\circ k}(A_{(r,R,\theta)}^t)} = \overline{B_0(R)} \setminus \{ \rho e^{i\alpha} : \, \, \rho, \alpha
\in \RR \;, \; 0 < \rho \leq R \; , \; \theta-t < \alpha < \theta +  t\}
\]
where $\overline{B_0(R)}$ stands for the closure of the disc $B_0 (R)$ of radius~$R$ around $0 \in \CC$. The interior of the
set $\overline{B_0(R)} \setminus \{ \rho e^{i\alpha} : \, \, \rho, \alpha
\in \RR \;, \; 0 < \rho \leq R \; , \; \theta-t < \alpha < \theta + t\}$ will be denoted by $B_0^t (R ,\theta)$.

From now on, let $R$ be such that $B_0 (2R)$ is contained in $\DD$, where $B_0 (2R)$ is the disc of radius $2R$ around $0 \in \CC$.
In particular, $\phi$ is injective on $B_0 (2R) \setminus l^{\theta}$.

Consider the sequence of maps $\{\xi^k\}$ defined on $A_{(r,R,\theta)}^t$ by
\[
\xi^k = \phi \circ \Lambda^{\circ k} \; .
\]
The maps $\xi^k$ are viewed as taking values in $\CC \PP^1 \simeq \CC \cup \{ \infty \}$. We have:

\begin{lm}\label{lm:normal_family}
The maps $\{\xi^k\}$ (taking values in $\CC \PP^1$) form a normal family for the compact open topology
\end{lm}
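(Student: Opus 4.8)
The plan is to establish normality of $\{\xi^k\}$ by verifying Montel's criterion: since each $\xi^k$ takes values in $\CC\PP^1$, it suffices to show that the maps uniformly omit three fixed points of $\CC\PP^1$, or alternatively to exhibit a uniform modulus bound that, combined with the injectivity hypothesis, forces equicontinuity. I would argue via omitted values. Recall that $\phi$ is injective on $B_0(2R)\setminus l^\theta$, and each $\Lambda^{\circ k}$ maps $A_{(r,R,\theta)}^t$ into the smaller annulus $A_{(r^{k+1}/R^k,\,r^k/R^{k-1},\theta)}^t \subset B_0(R)\setminus l^\theta$. Consequently every $\xi^k$ is itself injective on $A_{(r,R,\theta)}^t$, being a composition of the injective map $\phi$ with the (injective) homothety $\Lambda^{\circ k}$ restricted to a subdomain on which $\phi$ is one-to-one.

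\textbf{Key steps.} First I would fix a slightly larger domain, for instance $A_{(r',R',\theta)}^{t'}$ with $r'<r$, $R'>R$ and $t'<t$, still contained in $B_0(2R)\setminus l^\theta$, so that $\overline{A_{(r,R,\theta)}^t}$ is a compact subset of its interior. Next, observe that the images $\phi\big(\Lambda^{\circ k}(A_{(r',R',\theta)}^{t'})\big)$ are pairwise ``almost disjoint'' in the sense made precise in the construction just before the lemma: the sets $\Lambda^{\circ k}(A_{(r,R,\theta)}^t)$ themselves are pairwise disjoint and their closures tile the punctured sector $B_0^t(R,\theta)$, so by injectivity of $\phi$ on this sector the images $\phi\big(\Lambda^{\circ k}(A_{(r,R,\theta)}^t)\big)$ are pairwise disjoint open subsets of $\CC\PP^1$. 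A family of injective holomorphic maps from a fixed domain into $\CC\PP^1$ whose images are pairwise disjoint must be normal: indeed, the total spherical area of $\CC\PP^1$ being finite, the images can have spherical area exceeding, say, $\varepsilon>0$ for only finitely many $k$; hence for all large $k$ the image $\xi^k(A_{(r',R',\theta)}^{t'})$ has small spherical area, and a standard normal-families argument (e.g. via the three-point/Montel criterion applied after discarding finitely many terms, or directly via the length-area / Koebe-type estimate for univalent maps into $\CC\PP^1$ equipped with the spherical metric) yields equicontinuity on the compact subset $\overline{A_{(r,R,\theta)}^t}$. The finitely many discarded maps $\xi^0,\dots,\xi^{N}$ are automatically normal (being individual holomorphic maps into the compact space $\CC\PP^1$, they already lie in the relevant function space), so the whole family $\{\xi^k\}$ is normal.

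\textbf{Main obstacle.} The delicate point is making the passage ``pairwise disjoint images $\Rightarrow$ normal family into $\CC\PP^1$'' rigorous, because unlike the classical Montel theorem for maps omitting three \emph{fixed} points, here the omitted set varies with $k$. The cleanest route is to use that $\CC\PP^1$ carries the spherical metric of finite total area $A(\CC\PP^1)<\infty$; since $\sum_k \operatorname{area}_{\mathrm{sph}}\big(\xi^k(A_{(r',R',\theta)}^{t'})\big) \le A(\CC\PP^1) < \infty$ by disjointness, one gets $\operatorname{area}_{\mathrm{sph}}(\xi^k(\cdot)) \to 0$, and then for a univalent map into $\CC\PP^1$ a small-image bound controls the spherical derivative on compact subsets, giving uniform equicontinuity for the spherical metric — which is precisely normality in $\CC\PP^1$ for the compact-open topology. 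I would therefore organize the proof around this area estimate; the only care needed is to check that the homotheties $\Lambda^{\circ k}$ restricted to the fixed domain are uniformly bi-Lipschitz onto their images after rescaling, so that equicontinuity statements transfer correctly between $\xi^k$ on $A_{(r,R,\theta)}^t$ and $\phi$ on the shrinking annuli, a routine verification given the explicit linear form of $\Lambda$.
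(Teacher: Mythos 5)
Your argument is essentially correct, but it takes a genuinely different and noticeably heavier route than the paper. You rely on the pairwise disjointness of the images $\xi^{k}(A_{(r,R,\theta)}^{t})$, sum their spherical areas against the finite area of $\CC\PP^1$, and then convert ``small image area'' into equicontinuity for univalent maps -- a step that, as you yourself flag, is the delicate one: since the omitted set varies with $k$, you need either a spherical Koebe-type distortion estimate (with some care about the image approaching $\infty$ in the chosen coordinate) or the generalized Montel theorem for families omitting three points that vary with $k$ but stay uniformly separated. The paper sidesteps all of this with one observation: every image $\xi^{k}(A_{(r,R,\theta)}^{t})$ is contained in $\phi\bigl(B_0(R)\setminus l^{\theta}\bigr)$, which -- by injectivity of $\phi$ on the larger set $B_0(2R)\setminus l^{\theta}$ -- is disjoint from the \emph{fixed} open set $V=\phi(U)$ for a small disc $U\subset A_{(R,2R)}\setminus l^{\theta}$. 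After a M\"obius change of coordinate on $\CC\PP^1$ sending a point of $V$ to $\infty$, the whole family is uniformly bounded, and the classical Montel theorem applies directly. So what your approach buys is a statement of independent interest (injective maps with pairwise disjoint images form a normal family), at the cost of a nonstandard analytic lemma you would still have to prove; the paper's choice of a single uniformly omitted open set reduces the whole lemma to the textbook bounded case. If you keep your route, do spell out the omitted-triple version of Montel or the spherical distortion estimate -- as written, that step is asserted rather than proved.
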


\begin{proof}
Recall that $\phi$ is injective on $B_0 (2R) \setminus l^{\theta}$. Similarly, the maps $\{\xi^k\}$ are also injective
on $A_{(r,R,\theta)}^t$. In particular, the sets $\xi^{k_1} (A^t_{(r,R,\theta)})$ and
$\xi^{k_2} (A^t_{(r,R,\theta)})$ are disjoint provided that $k_1 \neq k_2$. Next let $U$ be a small disc contained
in $A_{(R,2R)} \cap B_0 (2R) \setminus l^{\theta}$ and set $V = \phi (U)$.
Since $\phi$ is injective on $B_0 (2R) \setminus l^{\theta}$, it follows that $\xi^{k} (A^t_{(r,R)})$ is disjoint from
$V$ for every $k \in \NN_0$. In fact, the latter sets are all contained in
$\phi (B_0 (R) \setminus l^{\theta})$ which is itself disjoint from $V$. Up to choosing an appropriate coordinate on $\CC \PP^1$,
we can assume without loss of generality that $\phi (B_0 (R) \setminus l^{\theta})$
is contained in some disc $\Omega \subset \CC$. Thus the images of all the maps $\xi^k : A_{(r,R,\theta)}^t \rightarrow \CC \PP^1$
are actually contained in $\Omega$. The lemma now follows from Montel theorem.
\end{proof}

Next, we also have:

\begin{lm}\label{lm:seq_to_cte}
Every convergent subsequence of $\{\xi^k\}$ converges to a constant map.
\end{lm}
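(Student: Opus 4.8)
The plan is to combine the geometric input already recorded in the proof of Lemma~\ref{lm:normal_family} --- namely that the sets $\xi^k(A_{(r,R,\theta)}^t)$ are pairwise disjoint and all contained in a fixed disc $\Omega \subset \CC$ --- with a short argument bounding areas. Write $\mathcal{A} = A_{(r,R,\theta)}^t$ for brevity; this is a connected annular sector, and each $\xi^k$ is injective on $\mathcal{A}$, since $\xi^k = \phi \circ \Lambda^{\circ k}$ with $\Lambda^{\circ k}$ an injective affine contraction sending $\mathcal{A}$ into $B_0(R) \setminus l^\theta \subset B_0(2R) \setminus l^\theta$, a region on which $\phi$ is injective. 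First I would observe that, because the $\xi^k(\mathcal{A})$ are pairwise disjoint subsets of the bounded set $\Omega$, their Euclidean areas satisfy $\sum_{k \geq 0} \mathrm{area}\big(\xi^k(\mathcal{A})\big) \leq \mathrm{area}(\Omega) < \infty$; in particular $\mathrm{area}(\xi^k(\mathcal{A})) \to 0$ as $k \to \infty$. Moreover, by injectivity of $\xi^k$ and the holomorphic area formula, $\mathrm{area}(\xi^k(\mathcal{A})) = \int_{\mathcal{A}} |(\xi^k)'(z)|^2 \, dx\,dy$.

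Now suppose $\xi^{k_j} \to \psi$ in the compact-open topology. Since all the $\xi^k$ take values in the disc $\Omega$, so does $\psi$ (in $\overline{\Omega} \subset \CC$); hence the convergence is in fact locally uniform for the Euclidean metric, $\psi$ is holomorphic and $\CC$-valued on $\mathcal{A}$, and $(\xi^{k_j})' \to \psi'$ locally uniformly on $\mathcal{A}$. Fix a relatively compact open set $W \Subset \mathcal{A}$. On the one hand $\int_W |(\xi^{k_j})'|^2 \leq \int_{\mathcal{A}} |(\xi^{k_j})'|^2 = \mathrm{area}(\xi^{k_j}(\mathcal{A})) \to 0$; on the other hand $\int_W |(\xi^{k_j})'|^2 \to \int_W |\psi'|^2$ by the locally uniform convergence of the derivatives on $\overline{W}$. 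Therefore $\int_W |\psi'|^2 = 0$, so $\psi' \equiv 0$ on $W$; as $W \Subset \mathcal{A}$ was arbitrary and $\mathcal{A}$ is connected, $\psi' \equiv 0$ on $\mathcal{A}$, i.e. $\psi$ is constant. This is the assertion of Lemma~\ref{lm:seq_to_cte}.

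The only delicate point is the bookkeeping at the interface between the spherical convergence produced by Montel's theorem in Lemma~\ref{lm:normal_family} and the Euclidean statements about areas and derivatives used above; this is handled entirely by the uniform bound on the images $\xi^k(\mathcal{A}) \subset \Omega$ that is already available. If one prefers to avoid areas, an alternative is a Hurwitz/Rouch\'e argument: assuming $\psi$ non-constant, pick $p \in \mathcal{A}$ with $\psi'(p) \neq 0$; then for all large $j$ the image $\xi^{k_j}(\mathcal{A})$ contains a fixed small disc centred at $\psi(p)$, which is impossible since the sets $\xi^{k_j}(\mathcal{A})$ are pairwise disjoint. I expect the area argument to be the cleaner one to write out in full.
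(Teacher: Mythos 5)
Your proof is correct, but your primary argument is genuinely different from the one in the paper. The paper argues via a Hurwitz-type statement: a locally uniform limit of injective holomorphic maps is either constant or injective (hence open), so a non-constant limit $\xi^{\infty}$ would have an image containing a ball $B=\xi^{\infty}(U)$, and uniform convergence on $U$ would force $\xi^{k_{n_1}}(U)\cap\xi^{k_{n_2}}(U)\neq\emptyset$ for large indices, contradicting the pairwise disjointness of the images. This is exactly the Hurwitz/Rouch\'e alternative you sketch in your last paragraph. Your main route instead converts the same disjointness into the quantitative packing estimate $\sum_k \mathrm{area}\bigl(\xi^k(\mathcal{A})\bigr)\leq\mathrm{area}(\Omega)<\infty$, identifies $\mathrm{area}(\xi^k(\mathcal{A}))$ with $\int_{\mathcal{A}}|(\xi^k)'|^2$ via injectivity, and kills $\psi'$ by Weierstrass convergence of derivatives. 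Both arguments rest on the same two inputs (injectivity of each $\xi^k$ and disjointness of the images inside a bounded set $\Omega$), and both handle the spherical-versus-Euclidean issue the same way, through the uniform containment in $\Omega$. What your version buys is a slightly more elementary and more quantitative proof: it bypasses the Hurwitz theorem entirely and even gives a rate ($\mathrm{area}(\xi^k(\mathcal{A}))\to 0$ summably), at the modest cost of invoking the holomorphic area formula; the paper's version is shorter once one grants the cited fact about limits of univalent functions. Either write-up would be acceptable here.
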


\begin{proof}
Since $\{\xi^k\}$ is a sequence of injective (holomorphic) maps, the limit of any uniformly convergent subsequence
is either a constant map or another (injective) holomorphic map
(see, for example, page 5 of \cite{Duren_book}). Let us then choose a convergent
subsequence $\{\xi^{k_n}\}$ of $\{\xi^k\}$ and assume for a contradiction that its limit $\xi^{\infty}$ is not constant. In this case,
the image of
$A^t_{(r,R,\textcolor{red}{\theta})}$ under $\xi^{\infty}$ must contain some open ball $B$ which, in turn, satisfies $B = \xi^{\infty} (U)$ for some open set
$U \subset A_{(r,R,\theta)}^t$.
Now, the fact that the sequence $\{\xi^{k_n}\}$ is uniformly convergent on $U$ and that the corresponding images converge to $B$
implies that $\xi^{k_{n_1}} (U) \cap \xi^{k_{n_2}} (U) \neq \emptyset$ for $n_1$ and $n_2$ sufficiently large. This is, however, impossible
since the images of $A^t_{(r,R,\theta)}$ by the iterates of $\xi$ are pairwise disjoint. The lemma is proved.
\end{proof}

Now, denote by $\mc{P}_0$ the set of points $p \in \CC \cup \{ \infty \}$ satisfying the following condition:
$p = \lim \phi (z_n)$ for some sequence
$\{z_n\}$ in $B_0 (R) \setminus l^{\theta}$ converging to $0\in\CC$. Furthermore, for every sequence
$\{z_n\} \subset B_0^t(R,\theta)$ converging to $0 \in
\CC$, we define a sequence $\{z'_n\} \subset A^t_{(r,R,\theta)}$ along with a sequence of positive integers $\{k(n)\}$
by means of the equation
\begin{align*}
z_n = \Lambda^{\circ k(n)} (z'_n) \ .
\end{align*}
Naturally, up to passing to a subsequence, we can assume without loss of generality that the sequence $\{z'_n\}$ converges to
some point in the closure of $A^t_{(r,R, \theta)}$. Let us denote by $\mc{P}_0'$ the subset of $\mc{P}_0$ consisting of the accumulation
points of $\phi$ arising from sequences $\{z_n\}$ satisfying the following condition: the sequence $\{z'_n\}$ is convergent and
its limit does not lie in the boundary $\partial A_{(r,R, \theta)}^t$ of $A_{(r,R, \theta)}^t$.
This condition ensures that $\{z'_n\}$ will be contained in some compact subset of $A_{(r,R, \theta)}^t$.

\begin{lm}\label{lm:Ac'_countable}
$\mc{P}_0'$ is a countable set.
\end{lm}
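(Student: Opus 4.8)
The plan is to reduce the statement to a property of the normal family $\{\xi^k\}$ introduced above, and then to exploit the pairwise disjointness of the images $\xi^k(A_{(r,R,\theta)}^t)$ together with the self–similarity $\xi^{k+1}=\xi^k\circ\Lambda$.

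\smallskip
\noindent\textbf{Step 1 (reformulation).} First I would identify $\mc{P}_0'$ with the set of those $p\in\CC\PP^1$ that arise as the (necessarily constant, by Lemma~\ref{lm:seq_to_cte}) value of a uniform limit on compact subsets of $A_{(r,R,\theta)}^t$ of some subsequence of $\{\xi^k\}$; equivalently, with the accumulation set in $\CC\PP^1$ of the sequence $\{\xi^k(z')\}_{k\ge0}$ for any fixed $z'\in A_{(r,R,\theta)}^t$ (the resulting set not depending on $z'$, again because all subsequential limits are constant). Indeed a sequence $\{z_n\}$ as in the definition of $\mc{P}_0'$ satisfies $z_n=\Lambda^{\circ k(n)}(z_n')$ with $k(n)\to\infty$ and $\{z_n'\}$ in a fixed compact subset of $A_{(r,R,\theta)}^t$; passing to a subsequence for which $k(n)$ is strictly increasing and $\xi^{k(n)}\to\psi$ uniformly on compacta, one gets $\psi$ constant and $\psi\equiv\lim_n\xi^{k(n)}(z_n')=\lim_n\phi(z_n)$, and conversely $z_n'\equiv z'$ realizes every such value.

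\smallskip
\noindent\textbf{Step 2 (the disjoint chain of images, and its diameters).} Put $\mc{D}_k=\xi^k(A_{(r,R,\theta)}^t)$. Since $\Lambda^{\circ(k+1)}(A_{(r,R,\theta)}^t)$ is the annular sector adjacent to $\Lambda^{\circ k}(A_{(r,R,\theta)}^t)$ of the same shape, $\overline{\mc{D}_k}$ and $\overline{\mc{D}_{k+1}}$ meet along the common arc $\Gamma_k:=\xi^k\bigl(\{re^{i\alpha}:\theta+t\le\alpha\le\theta+2\pi-t\}\bigr)$, so $\bigcup_{k\ge N}(\mc{D}_k\cup\Gamma_k)$ is connected for every $N$. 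On the other hand the $\mc{D}_k$ are pairwise disjoint open subsets of $\CC\PP^1$ by (the proof of) Lemma~\ref{lm:normal_family}; and because each $\Lambda^{\circ k}(A_{(r,R,\theta)}^t)$ is, up to the similarity $\Lambda^{\circ k}$, of a fixed shape sitting at a uniformly positive relative distance from the locus where $\phi$ fails to be injective, the Koebe distortion theorem gives $\mathrm{diam}(\mc{D}_k)^2\asymp\mathrm{area}_{\mathrm{sph}}(\mc{D}_k)$ with constants independent of $k$. Summing over the disjoint $\mc{D}_k$ forces $\sum_k\mathrm{diam}(\mc{D}_k)^2<\infty$, hence $\mathrm{diam}(\mc{D}_k)\to0$ (and likewise $\mathrm{diam}(\Gamma_k)\to0$). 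Combining this with Step~1 yields $\mc{P}_0'=\bigcap_{N}\overline{\bigcup_{k\ge N}(\mc{D}_k\cup\Gamma_k)}$, a nested intersection of compact connected sets, so $\mc{P}_0'$ is a continuum; in particular it is countable as soon as it is not a nondegenerate continuum.

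\smallskip
\noindent\textbf{Step 3 (the continuum is a single point).} Suppose, for contradiction, that $\mc{P}_0'$ contains distinct $p\neq p'$. Since $\mc{P}_0'$ is the accumulation set of $\{\xi^k(z')\}$ and $|\xi^k(z')-\xi^{k+1}(z')|\lesssim\mathrm{diam}(\mc{D}_k)\to0$, the sequence performs, for each $N$, an excursion from a neighbourhood of $p$ to a neighbourhood of $p'$ over a block of consecutive indices $[k_N,m_N]$, the blocks being pairwise disjoint. The corresponding piece $\bigcup_{k\in[k_N,m_N]}(\mc{D}_k\cup\Gamma_k)$ is a conformally embedded topological rectangle whose two short sides are at extremal distance $\asymp(m_N-k_N)$ and which contains points at mutual distance $\ge\tfrac12|p-p'|$, so the extremal–length inequality forces its spherical area to be $\gtrsim|p-p'|^2/(m_N-k_N)$. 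Here I would bring in the monodromy relation in the form $\xi^k(w)=\tfrac{c\ln(r/R)}{2\pi i}\,k+\tfrac{c}{2\pi i}\ln(w/z_0)+g\bigl(\Lambda^{\circ k}(w)\bigr)$, whose leading coefficient $a:=\tfrac{c\ln(r/R)}{2\pi i}$ is nonzero: comparing the good scales of $p$ and of $p'$ through this identity pins the asymptotic profile of $g$ on the sectors at the two ends of each excursion, and the disjointness of the translated images $\xi^k(A_{(r,R,\theta)}^t)=ak+\bigl\{\tfrac{c}{2\pi i}\ln(w/z_0)+g(\Lambda^{\circ k}w)\bigr\}$ then bounds the lengths $m_N-k_N$ from above; inserting this into $\sum_N\mathrm{area}_{\mathrm{sph}}\bigl(\bigcup_{k\in[k_N,m_N]}\mc{D}_k\bigr)\le\mathrm{area}_{\mathrm{sph}}(\CC\PP^1)$ contradicts $\sum_k\mathrm{diam}(\mc{D}_k)^2<\infty$. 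Hence $\mc{P}_0'$ reduces to one point, and is a fortiori countable.

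\smallskip
The main obstacle is Step~3: extracting from the monodromy relation together with the disjointness of the images a genuine upper bound on how long the orbit $\{\xi^k(z')\}$ can take to travel between two of its accumulation values. Steps~1 and~2 are essentially bookkeeping once the normal–family picture and the Koebe estimate are in place.
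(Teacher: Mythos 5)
Your Step~1 is the same reduction the paper itself makes: via Montel's theorem and Lemma~\ref{lm:seq_to_cte}, every point of $\mc{P}_0'$ is the constant limit of a subsequence of $\{\xi^k\}$, equivalently an accumulation point of the single orbit $\{\xi^k(z')\}_{k\ge 0}$ for a fixed interior point $z'$. From there the two arguments diverge completely: the paper concludes countability directly from this reduction by a counting argument (over subsequences of $\{\xi^k\}$ and a compact exhaustion of $A_{(r,R,\theta)}^t$), whereas you aim at the much stronger fact that $\mc{P}_0'$ is a single point --- which is essentially the content of Proposition~\ref{thereislimitforphi}, proved in the paper only afterwards and by a different mechanism (countability plus a connectedness argument along paths $l_n$ joining $x_n$ to $y_n$).

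The difficulty is that your route cannot stop short of the singleton statement, and the step that would deliver it is missing. Step~2 is essentially sound: the sectors $\Lambda^{\circ k}(A_{(r,R,\theta)}^t)$ have uniformly bounded hyperbolic diameter inside the domain of univalence $B_0(2R)\setminus l^{\theta}$, so Koebe distortion gives $\mathrm{diam}(\mc{D}_k)^2\le C\,\mathrm{area}(\mc{D}_k)$ uniformly in $k$ (after using Lemma~\ref{lm:normal_family} to place all images in a bounded chart), disjointness gives $\sum_k \mathrm{diam}(\mc{D}_k)^2<\infty$, and the chain structure exhibits $\mc{P}_0'$ as a nested intersection of continua, hence a continuum. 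But a continuum is countable if and only if it is a single point, so after Step~2 the lemma is \emph{equivalent} to the singleton claim and everything rests on Step~3. Step~3 is not a proof: the length--area inequality gives $\mathrm{area}\bigl(\bigcup_{k\in[k_N,m_N]}\mc{D}_k\bigr)\gtrsim |p-p'|^2/(m_N-k_N)$, which contradicts nothing unless the excursion lengths $m_N-k_N$ are bounded, or at least grow slowly enough that $\sum_N (m_N-k_N)^{-1}$ diverges; the passage where you claim to extract such a bound from the relation $\xi^k(w)=ak+\tfrac{c}{2\pi i}\ln(w/z_0)+g(\Lambda^{\circ k}w)$ is only an announcement of intent. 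In fact, applying Cauchy--Schwarz to $\sum_{k=k_N}^{m_N}\mathrm{diam}(\mc{D}_k)\ge\tfrac12|p-p'|$ shows that the finiteness of $\sum_k\mathrm{diam}(\mc{D}_k)^2$ only forces $\sum_N(m_N-k_N)^{-1}<\infty$, which is perfectly consistent with $m_N-k_N\to\infty$, so no contradiction is available from the ingredients you have assembled. As you yourself acknowledge, this is the main obstacle, and it is not overcome; the proposal therefore does not establish the lemma.
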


\begin{proof}
Let $p$ be a point in $\mc{P}_0'$ and $\{z_n\}$ a sequence as above such that $\lim \phi (z_n) = p$. Clearly, we have
$\phi (z_n) = \phi \lb \Lambda^{\circ k(n)} (z'_n) \rb = \xi^{k(n)} (z'_n)$. Owing to Montel theorem, up to passing to suitable subsequences,
we can assume that the sequence of maps $\{\xi^{k(n)}\}$ converges on some compact subset of $A_{(r,R,\theta)}^t$ containing all the points
in the sequence $\{z'_n\}$. Since Lemma~\ref{lm:seq_to_cte} states that the limit of $\{\xi^{k(n)}\}$ on a compact set is constant,
there follows that $p$ is,
in fact, the limit of some subsequence of $\{\xi^{k}\}$ on a compact subset of $A_{(r,R,\theta)}^t$.
Now, since there are only countably many subsequences of $\{\xi^{k}\}$, and
since we can fix a (countable) exhaustion of $A_{(r,R,\theta)}^t$ by compact sets, we conclude that only countably many of these
accumulation points can exist. This proves the lemma.
\end{proof}

Now Lemma \ref{lm:Ac'_countable} can be strengthened to encompass the entire set of accumulation points $\mc{P}_0$ of $\phi$.

\begin{lm}\label{lm:Ac_countable}
$\mc{P}_0$ is a countable set.
\end{lm}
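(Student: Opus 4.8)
The plan is to realize $\mc{P}_0$ as a countable union of sets each of which, up to translation by one of finitely many explicit constants, is of the type treated in Lemma~\ref{lm:Ac'_countable} for a suitably \emph{shifted} fundamental annular sector; as each of the latter is countable, so is $\mc{P}_0$. Concretely, I would fix $p\in\mc{P}_0$ and a sequence $z_n\to 0$ in $B_0(R)\setminus l^\theta$ with $\phi(z_n)\to p$, and pass to a subsequence so that $\arg z_n$ (read in $(\theta,\theta+2\pi)$) tends to some $\alpha_\infty\in[\theta,\theta+2\pi]$ and $\log|z_n|$ tends, modulo $\log(R/r)$, to some $\beta\in\RR/\log(R/r)\ZZ$. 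The task is then to remove, one at a time, the three features that would keep this sequence from being admissible in Lemma~\ref{lm:Ac'_countable}: (i) the argument limit $\alpha_\infty$ lying on the radial part of $\partial A^t_{(r,R,\theta)}$; (ii) the sequence accumulating on the cut direction $l^\theta$; (iii) the modulus limit lying on one of the circular seams $\{|z|=r(r/R)^{k}\}$.

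Feature (i) is soft: I would let $t$ run through $\{1/m:m\in\NN\}$. If $\alpha_\infty\notin\{\theta,\theta+2\pi\}$, then for $m$ large one has $\alpha_\infty\in(\theta+2/m,\theta+2\pi-2/m)$ and $z_n\in B_0^{1/m}(R,\theta)$ eventually, so any renormalized limit has argument bounded away from the two radial boundary segments. Feature (ii), the case $\alpha_\infty\in\{\theta,\theta+2\pi\}$, is equally soft: replace the cut $l^\theta$ by $l^{\theta+\pi}$. On each of the two sectors $\{\theta<\arg<\theta+\pi\}$ and $\{\theta+\pi<\arg<\theta+2\pi\}$ the two determinations of $\phi$ differ by a fixed constant, necessarily a multiple of $c$ in a finite set; passing to a subsequence inside one sector, $p$ becomes---after subtracting such a constant---a point of the set $\mc{P}_0$ relative to $l^{\theta+\pi}$, realized now by a sequence whose argument tends to $\theta\neq\theta+\pi\pmod{2\pi}$, i.e.\ \emph{not} hugging the new cut. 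So it suffices to treat sequences that do not hug the cut, with $t=1/m$ fixed.

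Feature (iii) is the only genuinely new point, and the hard part; I would handle it with a \emph{modular shift}. Shrinking $R$ at the outset (legitimate, as $\phi=\Dv_{\mu}$ is injective on all of $\DD$), fix $q:=r/R$ and, for $s\in\{1,\sqrt{R/r}\,\}$, put $\hat A^t_s:=\{\rho e^{i\alpha}:\ sr<\rho<sR,\ \theta+t<\alpha<\theta+2\pi-t\}$, small enough that $\phi$ is injective on a neighbourhood of the whole $\Lambda$-orbit region of $\hat A^t_s$. Just as in the body of the section, $\{\Lambda^{\circ k}(\hat A^t_s)\}_{k\ge 0}$ is a pairwise disjoint family whose closures tile the relevant punctured sector, with circular seams along the circles $|z|=sr\,q^{k}$, and the proofs of Lemmas~\ref{lm:normal_family}, \ref{lm:seq_to_cte} and \ref{lm:Ac'_countable} go through verbatim with $\hat A^t_s$ in place of $A^t_{(r,R,\theta)}$, yielding a \emph{countable} set $\mc{P}_0'(s)$ of accumulation values of $\phi$ coming from sequences whose $\hat A^t_s$-renormalization converges to an interior point of $\hat A^t_s$. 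Renormalizing our non-hugging sequence with respect to $\hat A^t_s$ produces points $z^{(s)}_n$ with the same argument limit $\alpha_\infty$ (interior by (i)), and a one-line computation gives $\log|z^{(s)}_n|-\log(sr)\equiv\log|z_n|-\log r-\log s\pmod{\log(R/r)}$; hence the renormalized limit lands on a circular seam exactly when $\beta\equiv\log r+\log s$ in $\RR/\log(R/r)\ZZ$. Since $\log 1=0$ and $\log\sqrt{R/r}$ differ by $\frac{1}{2}\log(R/r)$, which is nonzero in $\RR/\log(R/r)\ZZ$, at most one of the two shifts $s$ is ``bad''; for the other, $z^{(s)}_n$ converges to an interior point of $\hat A^t_s$, so $p\in\mc{P}_0'(s)$.

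Putting the three reductions together,
\[
\mc{P}_0\ \subseteq\ \bigcup_{m\in\NN}\ \bigcup_{s\in\{1,\sqrt{R/r}\}}\Big(\mc{P}_0'\big(s;\,1/m;\,l^\theta\big)\ \cup\ \bigcup_{e}\big(e+\mc{P}_0'\big(s;\,1/m;\,l^{\theta+\pi}\big)\big)\Big),
\]
where $e$ runs over the finite set of constants from step (ii); this is a countable union of countable sets, so $\mc{P}_0$ is countable. The main obstacle I anticipate is exactly feature (iii)---the circular seams, the one defect that cannot be dissolved by wiggling $t$ or the cut---and the point that makes it work is that two modular shifts whose logarithmic periods are offset by half a period are enough to avoid every seam at once; a subsidiary care is to keep the normality argument of Lemma~\ref{lm:normal_family} valid after enlarging the fundamental sector, which is why $R$ is shrunk in advance.
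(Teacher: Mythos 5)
Your argument is correct, and it follows the paper's overall strategy of reducing Lemma~\ref{lm:Ac_countable} to Lemma~\ref{lm:Ac'_countable} by modifying the fundamental annular sector so that the limit of the renormalized sequence $\{z_n'\}$ becomes an interior point; but the mechanism you use for the crucial ``circular seam'' step is genuinely different from the paper's. The paper simply replaces $A^t_{(r,R,\theta)}$ by a slightly larger sector $A^{t'}_{(r',R',\theta)}$ with $0<t'<t$, $0<r'<r$ and $R<R'<2R$, whose interior contains the closure of $A^t_{(r,R,\theta)}$, while keeping the \emph{same} contraction $\Lambda$ and the same indices $k(n)$; since the proof of Lemma~\ref{lm:Ac'_countable} only needs normality of $\{\phi\circ\Lambda^{\circ k}\}$ on the reference domain, constancy of the sublimits, and a countable compact exhaustion, a single radial/angular enlargement suffices and the issue of ``new seams'' that you anticipate never arises, because no new tiling is introduced. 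Your route instead keeps genuine fundamental domains for $\Lambda$ (same modulus $q=r/R$) and disposes of the seams by the half-period offset between the two shifts $s\in\{1,\sqrt{R/r}\}$, at the cost of a pigeonhole argument in $\RR/\log(R/r)\ZZ$; what this buys you is that the disjointness of the images $\Lambda^{\circ k}(\hat A^t_s)$ used in Lemma~\ref{lm:seq_to_cte} is preserved verbatim, whereas the paper's enlarged sector has overlapping $\Lambda$-iterates and strictly speaking requires a one-line adaptation of that disjointness argument. A further point in your favour: your step (ii) treats sequences accumulating on the cut $l^\theta$ inside the proof itself (via the branch on $\DD\setminus l^{\theta+\pi}$ and the finite set of correction constants $kc$), whereas the paper's proof of Lemma~\ref{lm:Ac_countable} does not explicitly address such sequences and only handles them in the subsequent paragraph on the set $\mc{AC}$, using the two sectors $V_1,V_2$ --- essentially your same device.
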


\begin{proof}
Clearly, we only need to consider those sequences $\{z_n\} \in B_0 (R) \setminus l^{\theta}$ converging to $0\in\CC$ that
fall short of verifying the condition of Lemma~\ref{lm:Ac'_countable}.
In other words, the limit of the corresponding sequence $\{z_n'\} \in A^t_{(r,R, \theta)}$ lies in
the boundary of $A^t_{(r,R, \theta)}$. To handle this situation, it suffices to note
that the sequence in question is contained in the compact set given as the closure of $A^t_{(r,R, \theta)}$.
Clearly the closure of $A^t_{(r,R, \theta)}$
is contained in some open set $A^{t'}_{(r',R', \theta)}$ for some real constants $t', r', R'$ such that $0 < t'
< t$, $0 < r' < r$ and $R < R' < 2R$ so that $\phi$ is still injective on the relevant sets (recall that
$\phi$ is injective on $B_0 (2R) \setminus l^{\theta}$).
The result follows by applying the previous lemma with respect to the open set $A_{(r',R', \theta)}^{t'}$.
\end{proof}

Let us now consider the set $\mc{AC}$ of accumulation points $p = \lim \phi (z_n)$ where $\{ z_n \} \rightarrow 0 \in \CC$ and
where $\phi$ can be changed by changing the branch of logarithm $\ln$.
We claim that $\mc{AC}$ still is a countable set. More precisely, consider two
distinct angles $\theta_1$ and $\theta_2$ and $t >0$.  For $i=1,2$, define the sectors $V_1$, $V_2$ by
$$
V_i = \lcb \rho e^{i \alpha} \; : \; \; \; 0 < \rho < R \; \; \; {\rm and} \; \; \;
\theta_i + t <\alpha< \theta_i+2\pi - t \rcb \, .
$$
Now, up to reducing $t$, there follows that $V_1 \cup V_2$ contains a punctured neighborhood of $0 \in \CC$.
The previous lemma shows that the set of points $p = \lim \phi (z_n)$ with $z_n \rightarrow 0$ and $\{ z_n\} \subset V_i$
($i=1,2$) is countable. Clearly, if $p = \lim \phi (z_n)$ for some sequence $\{ z_n \} \rightarrow 0 \in \CC$, then we still
have $p = \lim \phi (z_{n(k)})$ for a subsequence $\{ z_{n(k)}\}$ fully contained in either $V_1$ or $V_2$. Thus,
$\mc{AC}$ is contained in the union of accumulation points obtained from sequences contained in either $V_1$ or $V_2$, at least
up to the choice of a determination for $\ln$ (which, in turn, gives rise to a determination of $\phi$ itself).
Since there are only countably many possible choices of branches of logarithm, two of them differing by a translation,
there follows that
$\mc{AC}$ is a countable set as desired.

We are now ready to prove Proposition \ref{prp:inject_ess_sing}.

\begin{proof}[Proof of Proposition \ref{thereislimitforphi}]
The proof amounts to showing that $\mc{AC}$, in fact, consists of a single point once a branch of logarithm $\ln$ is chosen.
In other words, let $\theta$ be fixed and let $\ln$ denote a branch of logarithm on $\DD \setminus l^{\theta}$. Now,
consider a (small) $t > 0$, and set
$$
V = \lcb \rho e^{i \alpha} \; : \; \; \; 0 < \rho < R \; \; \; {\rm and} \; \; \; \theta + t <\alpha< \theta+2\pi - t \rcb \, .
$$
To prove the proposition it suffices to ensure the existence of
a single point $p \in \CC \PP^1$ such that
$p = \lim \phi (z_n)$ where $z_n \rightarrow 0$ with $\{ z_n \} \subset V$.

Without loss of generality, we can assume $\theta =0$. Denote by $\mc{P}_0 \subseteq \mc{AC}$, the set of points
$p \in \CC \PP^1$ such that $p =\lim \phi (z_n)$ for some sequence $z_n \rightarrow 0$ with $\{ z_n \} \subset V$.
The set $\mc{P}_0$ is countable since so is $\mc{AC}$. Assume for a contradiction that $\mc{P}_0$ contains
two distinct points $p, q \in \CC \PP^1$ and choose sequences $\{x_n\}$ and $\{y_n\}$ satisfying the following conditions:
\begin{itemize}
	\item Both $\{x_n\}$ and $\{y_n\}$ converge to $0 \in \CC$ and we have $\{x_n\} \subset V$ and $\{y_n\} \subset V$.
	
	\item $p = \lim \phi (x_n)$ and $q = \lim \phi (y_n)$.
\end{itemize}

Next, let $l_n$ be a path joining $x_n$ to $y_n$ and entirely contained in $V$. Moreover, since $\{x_n\}$ and $\{y_n\}$ converge to $0 \in \CC$,
we can find a decreasing sequence $\{ \delta_n\}$, $\delta_n \in \RR_+$, such that the following holds:
\begin{itemize}
	\item[(1)] For every $n$, the path $l_n$ is entirely contained in the disc $B_0 (\delta_n)$ of radius $\delta_n$
	around $0 \in \CC$.
	
	\item[(2)] The decreasing sequence $\{ \delta_n \}$ converges to $0$.
\end{itemize}

Since $p$ and $q$ are distinct points, there exists $\varepsilon > 0$ such that $q$ does not belong to the closed ball $B_p (\varepsilon)$
of radius $\varepsilon$ around $p$. We also fix $\epsilon >0$ such that the closed disc $B_q (\epsilon)$ of radius $\epsilon$ around $q$ lies
entirely in the complement of $B_p (\varepsilon)$ inside $\CC \PP^1$.

Now, since $p = \lim \phi (x_n)$ and $q = \lim \phi (y_n)$, there exists $n_0 \in \NN$ such that, for all $n \geq n_0$,
the following holds:
$$
\phi (x_n) \in B_p (\varepsilon) \; \; \; \; {\rm and} \; \;\; \; \phi (y_n) \in B_q (\epsilon) \, .
$$
Thus, for every $n$ large enough, there is a point $w_n \in \phi (l_n)$ such that $w_n \in \partial
B_q(\epsilon)$. The sequence $\{w_n\}$ is contained in a compact set so that it possesses at least one accumulation point
in $\partial B_q(\epsilon)$. Let $w_{\epsilon} \in \partial B_q(\epsilon)$ be one accumulation point of the
sequence $\{w_n\}$. Clearly, for every $n$, $w_n = \phi (z_n)$ for some $z_n \in l_n$. By construction, the sequence $\{ z_n\}$
is contained in $V$ and converges to $0 \in \CC$ so that $w_{\epsilon}$ lies in $\mc{P}_0$.

To complete the proof, just let $\epsilon$ vary on an arbitrarily small interval. The preceding construction then yields
a continuum of points in $\mc{P}_0$ hence contradicting the fact that $\mc{P}_0$ is countable. This ends
the proof of Proposition~\ref{prp:inject_ess_sing}.
\end{proof}

\begin{rmk}\label{alternateproposition}
{\rm Let us close this section by pointing out another result (Proposition~\ref{prp:inject_ess_sing})
that can also be proved by following essentially the same argument provided above for
Propositions~\ref{Key_proposition_oninjectivedevelopingmaps} and~\ref{thereislimitforphi}. With this
notation, Proposition~\ref{prp:inject_ess_sing} reads:

Consider a holomorphic function $\mc{H}$ defined on a punctured neighborhood of $0 \in \CC$. Given $\alpha \in \CC$,
let $f$ be the multivalued function defined by $f(z) = \mc{H}(z) \, z^{\alpha}$. In accurate terms, $f$ is a well-defined
holomorphic function on domains of the form $\DD \setminus l^{\theta}$ up to re-scaling coordinates and this applies
to every choice of $\theta \in [0, 2\pi[$.

\begin{prp}\label{prp:inject_ess_sing}
	Assume that the function $f(z) = \mc{H}(z) \, z^{\alpha}$ is injective on the sector $\DD \setminus l^{\theta}$,
	for every $\theta \in [0,2\pi)$.
	Then $\mc{H}(z)$ does not have an essential singularity at $0 \in \CC$.
\end{prp}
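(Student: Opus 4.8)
The plan is to reduce the statement to the logarithmic situation already treated. Write $f(z)=\mc{H}(z)\,z^{\alpha}$. If $\alpha\in\ZZ$, then $f$ is single-valued and holomorphic on $\DD^{\ast}$; moreover $f$ must be injective on all of $\DD^{\ast}$, because two distinct points of $\DD^{\ast}$ with the same $f$-value would, for all but at most two values of $\theta$, both lie in $\DD\setminus l^{\theta}$, contradicting the hypothesis. A single-valued injective holomorphic function on $\DD^{\ast}$ cannot have an essential singularity at $0$ (this is exactly the Picard argument already used in the proof of Lemma~\ref{lm:UPS_phi_triv_mon}), and hence neither can $\mc{H}=f\,z^{-\alpha}$.

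From now on assume $\alpha\notin\ZZ$. First, $\mc{H}$ can vanish at most once in a punctured neighbourhood of $0$: two zeros $z_{1}\ne z_{2}$ would both belong to $\DD\setminus l^{\theta}$ for every $\theta\ne\arg z_{1},\arg z_{2}$, again contradicting injectivity of $f$ on $\DD\setminus l^{\theta}$. Shrinking $\DD$, we may therefore assume $\mc{H}$ is zero-free on $\DD^{\ast}$. Let $N\in\ZZ$ be the winding number of $\mc{H}$ around $0$; then $\mc{H}(z)z^{-N}$ is holomorphic, nonvanishing and of winding number zero on $\DD^{\ast}$, so it admits a single-valued holomorphic logarithm: $\mc{H}(z)=z^{N}e^{h(z)}$ with $h$ holomorphic on $\DD^{\ast}$. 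Consequently $f(z)=z^{\beta}e^{h(z)}$ with $\beta:=N+\alpha\notin\ZZ$; in particular $\beta\ne 0$.

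The decisive remark is that $\psi:=h+\beta\log z$ is injective on every slit disc $\DD\setminus l^{\theta}$. Indeed, on such a (simply connected) domain $\psi$ is a branch of $\log f$, and any branch of $\log$ is injective (being a section of $\exp$), so $\psi$ is the composition of the injective map $f$ with an injective branch of $\log$. Now $\psi$ has additive monodromy $2\pi i\beta$ around $0$ and is exactly of the form ``(single-valued holomorphic function on $\DD^{\ast}$) plus $\beta\log z$'' that underlies the parabolic case; the proofs of Proposition~\ref{Key_proposition_oninjectivedevelopingmaps} and of Lemma~\ref{k_0isatworst-1} use only this structural form together with injectivity on the slit discs, so they apply to $\psi$ and give, successively, that $h$ has no essential singularity at $0$ and then that $h$ has at worst a simple pole there, say $h(z)=a_{-1}z^{-1}+(\text{holomorphic at }0)$.

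It remains to exclude $a_{-1}\ne 0$, and this is the step that genuinely uses injectivity of $f$ itself (rather than merely of $\psi=\log f$). Running the argument of the proof of Proposition~\ref{thereislimitforphi} with $f$ in place of $\phi$ — it applies, since $f$ is injective on each $\DD\setminus l^{\theta}$ and, as $\theta$ varies, the relevant branches of $f$ differ by multiplication by a power of $e^{2\pi i\beta}$, so the countability argument is preserved — one obtains that $f$ possesses a limit in $\CC\PP^{1}$ along each $\DD\setminus l^{\theta}$. But if $a_{-1}\ne 0$ then $f(z)=z^{\beta}e^{a_{-1}/z}u(z)$ with $u$ holomorphic and nonvanishing at $0$, so along a ray $z=re^{i\alpha_{0}}$ we get $|f(re^{i\alpha_{0}})|\sim C_{\alpha_{0}}\,r^{\Re\beta}\exp(\Re(a_{-1}e^{-i\alpha_{0}})/r)$ as $r\to 0^{+}$, with $C_{\alpha_{0}}>0$; this tends to $\infty$ or to $0$ according to the sign of $\Re(a_{-1}e^{-i\alpha_{0}})$, and since the arguments of the points of $\DD\setminus l^{\theta}$ fill an interval of length $2\pi$, both signs occur. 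Hence $f$ has no limit along $\DD\setminus l^{\theta}$, a contradiction. Therefore $a_{-1}=0$, $h$ is holomorphic at $0$, and $\mc{H}=z^{N}e^{h}$ has no essential singularity at $0$. The main obstacle is the passage to $\psi=\log f$ together with the verification that it is covered by the earlier results; the winding-number factorisation, the transfer of the cited proofs, and the final ray estimate are then routine.
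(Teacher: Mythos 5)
Your proof is correct, and all the individual steps check out, but it is organized rather differently from what the paper has in mind. Remark~\ref{alternateproposition} proposes simply to run the normal-families/countability argument of Propositions~\ref{thereislimitforphi} and~\ref{Key_proposition_oninjectivedevelopingmaps} directly on $f$: once one knows that $\lim_{z \to 0} f(z)$ exists in $\CC\PP^1$ along each slit disc (which, as you correctly note, follows because that argument uses only injectivity on the domains $\DD \setminus l^{\theta}$ and the countability of the set of branches, here differing by powers of $e^{2\pi i \alpha}$), an elementary growth estimate finishes the proof: on a slit disc $|z^{\alpha}|$ is comparable to $|z|^{\Re \alpha}$, so each of the three cases $\lim f \in \CC^{*}$, $\lim f = 0$, $\lim f = \infty$ gives a polynomial bound on $|\mc{H}|$ or on $|1/\mc{H}|$, and Riemann's removable singularity theorem applied to $z^{M}\mc{H}$ or $z^{M}/\mc{H}$ excludes an essential singularity. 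You instead dispose of $\alpha \in \ZZ$ by Picard, factor $\mc{H} = z^{N}e^{h}$ via the winding number, and pass to $\psi = \log f = h + \beta \log z$, which has exactly the additive-monodromy form of Sections~4--5; this lets you import Proposition~\ref{Key_proposition_oninjectivedevelopingmaps} and Lemma~\ref{k_0isatworst-1} as black boxes to get $h = a_{-1}z^{-1} + O(1)$, and you then kill $a_{-1}$ by the ray estimate against the existence of $\lim f$. Two small remarks: your justification that $\psi$ is injective via ``composition with an injective branch of $\log$'' is imprecise, since $f(\DD \setminus l^{\theta})$ need not lie in the domain of a single branch --- but the conclusion is immediate from $e^{\psi} = f$ and the injectivity of $f$; and, more structurally, your final step is self-sufficient, so the entire logarithmic detour (and the separate case $\alpha \in \ZZ$) could be bypassed by the growth estimate above. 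What your route buys is the verbatim reuse of the already-proved parabolic-case lemmas; what it costs is length and the extra zero-free/winding-number reduction.
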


}
\end{rmk}

\section{Covering projective structures and proof of Theorem C}\label{sec:Cov_PS}

In this final section, we shall derive Theorem~C from well-known results in \cite{shiga} and \cite{kra3}. Throughout the
section $\mc{L}$ stands for a Riemann surface (orbifold) of finite type: this means that $\mc{L}$ is isomorphic to
a compact Riemann surface (orbifold) from which finitely many points were removed.

Recall that a (singular) bounded projective structure $\Ps$ on $\mc{L}$ is said to be a {\it covering projective structures}\,
if it satisfies the following conditions:
\begin{itemize}
	\item The developing map $\Dv$ (associated with the universal covering) is a covering of
	its image.
	
	\item The monodromy group of $\Ps$ acts discontinuously on the image of $\Dv$.
\end{itemize}

\begin{lm}\label{lm:UPS_to_CPS}
A uniformizable projective structure $\Ps$ on $\mc{L}$ is necessarily a bounded covering projective structure.
\end{lm}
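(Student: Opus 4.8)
The plan is to combine the local picture established in Theorems~A (i.e.\ Proposition~\ref{prp:generalorbifoldform_ellipticmono} and Lemma~\ref{order_phi_givenghasorderatleast-1}) with the characterization of uniformizability via quotients of Kleinian groups (Lemma~\ref{lm:uniformizable_vs_quotient}). First I would verify the \emph{covering} part. By Lemma~\ref{lm:uniformizable_vs_quotient}, since $\Ps$ is uniformizable, the projective surface $\mc{L} \setminus \Upsilon$ is isomorphic to the quotient $\mathcal{U}/G$ of an open invariant set $\mathcal{U} \subset \CC\PP^1$ by a Kleinian group $G = \mu(\pi_1(\mc{L}\setminus\Upsilon))$, and the developing map $\Dv_\mu$ on the monodromy covering $\mc{L}_\mu$ is a biholomorphism onto $\mathcal{U}$. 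Since $\mc{L}_\mu$ is itself a covering of $\mc{L}\setminus\Upsilon$ (the one associated with $\mathrm{Ker}\,\mu$) and the universal covering $\widetilde{\mc{L}}$ of $\mc{L}\setminus\Upsilon$ factors through $\mc{L}_\mu$, the developing map $\Dv : \widetilde{\mc{L}} \to \CC\PP^1$ associated with the universal covering is the composite of the covering $\widetilde{\mc{L}} \to \mc{L}_\mu$ with the biholomorphism $\Dv_\mu$; hence $\Dv$ is a covering map onto its image $\mathcal{U}$. The second defining condition of a covering projective structure, namely that the monodromy group $G$ acts discontinuously on $\Dv(\widetilde{\mc{L}}) = \mathcal{U}$, is exactly the statement that $G$ is a Kleinian group acting on the invariant open set $\mathcal{U}$ with quotient a (Hausdorff) Riemann surface orbifold, which again is furnished by Lemma~\ref{lm:uniformizable_vs_quotient}.

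The remaining, and genuinely substantive, point is \emph{boundedness}, i.e.\ that the Schwarzian differential $\omega = \xi(z)\,dz^2$ of $\Ps$ (measured against the canonical Fuchsian projective structure on $\mc{L}$) has finite $L^\infty$-hyperbolic norm. The strategy is to exhaust $\mc{L}$ by a compact ``core'' together with finitely many cusp/orbifold neighborhoods and bound $\Vert\omega\Vert_\infty$ on each piece separately. On the compact core, $\xi$ is continuous (holomorphic away from $\Upsilon$, and $\mc{L}$ has only finitely many points of $\Upsilon$ by discreteness plus finite type), so the sup of $|\widetilde\xi(\widetilde z)|(1-|\widetilde z|^2)^2$ over the corresponding compact part of $\DD$ is automatically finite. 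Near a point $p \in \Upsilon$, Theorem~A (together with Propositions~\ref{prp:generalorbifoldform_ellipticmono} and the explicit expansion in Lemma~\ref{order_phi_givenghasorderatleast-1}) tells us precisely that $\xi$ has at worst a simple pole at $p$ in the parabolic case, and in the orbifold case the lift $\xi^{(k)}$ is holomorphic, with $\xi_{\mathrm{orb}}$ given by Formula~(\ref{Schwarzian_orbifoldcoordinate}); in either case the local $L^\infty$ and $L^1$ norms are finite by the last bullet of Proposition~\ref{prp:generalorbifoldform_ellipticmono} and by the corresponding bullet in Corollary~B. The key point is that near a cusp the hyperbolic factor $(1-|\widetilde z|^2)^2$ in the ``downstairs'' coordinate $z$ behaves like $|z|^2(\log|z|)^2$, which more than compensates a simple pole $\xi \sim a_{-1}/z$: indeed $|z^{-1}|\cdot|z|^2(\log|z|)^2 = |z|(\log|z|)^2 \to 0$ as $z \to 0$, so the contribution of each puncture to $\Vert\omega\Vert_\infty$ is not only finite but tends to zero. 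Near an orbifold point the analogous computation using $y = z^{1/k}$ and the holomorphy of $\xi^{(k)}$ on all of $\DD$ again gives a bounded contribution. Assembling these finitely many bounds yields $\Vert\omega\Vert_\infty < \infty$.

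I would organize the write-up in two steps: (i) deduce the covering property directly from Lemma~\ref{lm:uniformizable_vs_quotient} as above, which is essentially formal; (ii) prove boundedness by the local-to-global exhaustion argument, quoting Theorem~A / Proposition~\ref{prp:generalorbifoldform_ellipticmono} for the behavior of $\xi$ at the finitely many singular points and a compactness argument on the complement. The main obstacle is step (ii), and within it the only delicate computation is the asymptotic comparison, near a cusp, between the simple pole of $\xi$ and the degeneration of the hyperbolic metric of $\mc{L}$ — one must be careful that the relevant hyperbolic metric is that of $\mc{L}$ itself (a cusp neighborhood, where the density is $\sim 1/(|z||\log|z||)$), not of the punctured disc with the flat structure; once this is set up correctly, the estimate $|z|(\log|z|)^2 \to 0$ closes the argument. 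Note also that finite type is used here precisely to guarantee that $\Upsilon$ is finite and that outside a neighborhood of $\Upsilon$ one has a genuinely compact region on which $\xi$ is bounded; for general $\mc{L}$ boundedness may fail, which is consistent with the hypotheses of Corollary~B and Theorem~C.
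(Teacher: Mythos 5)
Your treatment of the \emph{covering} property is essentially the paper's own argument: the developing map on the universal covering factors as $\Dv = \Dv_\mu \circ h$, with $h$ a covering of $\mc{L}_\mu$ and $\Dv_\mu$ a diffeomorphism onto its image, so $\Dv$ is a covering of ${\rm Im}\,(\Dv)$; the discontinuity of the monodromy group on ${\rm Im}\,(\Dv)$ comes from Lemma~\ref{lm:uniformizable_vs_quotient}. That half is fine and needs no change.

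For \emph{boundedness}, the paper simply quotes Corollary~B, which is already established at this stage, and that is all that is required. You instead re-derive the bound at the cusps, and there your estimate has a genuine gap: you are measuring the pole of $\xi$ against one reference structure and the hyperbolic weight against another. The ``simple pole'' supplied by Theorem~A and Lemma~\ref{order_phi_givenghasorderatleast-1} concerns $\Sch_z(\phi)$, the Schwarzian of the developing map in the \emph{flat} chart $z$ of the punctured disc. The differential $\omega$ entering $\Vert\omega\Vert_\infty$ (and the notion of boundedness in \cite{kra3}, \cite{shiga}) is the Schwarzian relative to the \emph{canonical} Fuchsian structure of $\mc{L}$; by the cocycle property, in the coordinate $z$ it equals $\bigl(\Sch_z(\phi) - \tfrac{1}{2z^2}\bigr)dz^2$, the correction term being $\Sch_z\bigl(\tfrac{1}{2\pi i}\log z\bigr)dz^2$. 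A double pole paired with your (correct) weight $|z|^2(\log|z|)^2$ gives $\tfrac12(\log|z|)^2 \to \infty$, not $0$; for instance the flat structure on $\DD^{\ast}$, whose $\xi$ is identically zero in the $z$-chart, is \emph{unbounded} at the cusp in the hyperbolic sup norm. So the computation $|z^{-1}|\cdot|z|^2(\log|z|)^2 \to 0$ only closes after one shows that the coefficient of $z^{-2}$ in $\Sch_z(\phi)$ is exactly $\tfrac12$ --- equivalently, that the developing map at a parabolic puncture has the form $\tfrac{c}{2\pi i}\log(z/z_0) + (\text{holomorphic})$, with the $a_{-1}z^{-1}$ term of Lemma~\ref{k_0isatworst-1} actually vanishing. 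That cancellation is an additional fact your write-up would have to supply (it is not the literal statement of Theorem~A). The painless repair is to do what the paper does and invoke Corollary~B directly rather than re-prove it.
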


\begin{proof}
The monodromy developing map $\Dv_\mu$ realizes a diffeomorphism between $\mc{L}_\mu$ and its image in $\CC \PP^1$.
If $\ker(\mu)$ is trivial then $\mc{L}_\mu$ is the universal covering and $\Dv=\Dv_\mu$ is, in particular, a covering map.
Let us now assume that $\ker(\mu)$ is non-trivial, and consider the covering map $h:\DD\longrightarrow\mc{L}_\mu$.
By construction, the (universal covering) developing map $\Dv$ and the monodromy-developing map $\Dv_\mu$
have the same image $U$ in $\CC\PP^1$ since
the monodromy of the lift of $\Ps$ to $\mc{L}_\mu$ is trivial. In other words, the developing map
$\Dv$ can be factored as a composition $\Dv=\Dv_\mu\circ h$. In particular, $\Dv$ is a composition of a diffeomorphism
($\Dv_\mu : \mc{L}_\mu \rightarrow U$) and a covering map ($h:\DD\longrightarrow\mc{L}_\mu$). Thus
$\Dv$ is a covering map itself. Finally, this projective structure $\Ps$ on $\mc{L}$ is necessarily bounded thanks
to Corollary~B.
\end{proof}

Strictly speaking the converse to Lemma~\ref{lm:UPS_to_CPS} does not hold in general. The simplest example of a covering projective structure that is
not uniformizable arises in $\CC$ (or equivalently in $\CC \PP^1$) by means of the singular projective structure $\Ps_k$ underlining the
translation structure induced by the vector field $z^k \partial /\partial z$, $k \geq 3$. Naturally, $\CC$ can be identified to its own
ramified $(k-1)$-sheet covering $\CC_{k-1}$ by means of the map $z \stackrel{\pi_k}{\mapsto} z^{k-1} \in \CC$. With this notation, the projective
structure $\Ps_k$ is nothing but the lift to $\CC_{k-1} \simeq \CC$ of the (translation) structure $\Ps_2$ induced on $\CC$ by the vector field
$z^2 \partial /\partial z$. Whereas $\Ps_2$ is uniformizable and has a singular point at $0 \in \CC$, it has no periods on $\CC^{\ast}$.
Thus, the monodromy covering associated with $\Ps_2$ is trivial and so is the monodromy covering associated with $\Ps_k$, $k\geq 3$.
However, for $k \geq 3$, the loop $c : [0,1] \rightarrow \CC^{\ast}$ given by $c(t) = e^{2\pi i t}$ lifts to $\CC_{k-1} \simeq \CC$ as an open
path whose endpoints have, by construction, the same image under the monodromy-developing map. In other words, the monodromy developing
map arising from $\Ps_k$ is not injective so that $\Ps_k$ is not uniformizable.

A partial converse to Lemma~\ref{lm:UPS_to_CPS}, however, is still possible on Riemann surfaces of finite type and this
is the content of Lemma~\ref{lm:CPS_to_UPS} below.

\begin{lm}\label{lm:CPS_to_UPS} Let $\mc{L}$ denote a Riemann surface (orbifold) of finite type and let $\Ps$ be a (singular) bounded covering
	projective structure on $\mc{L}$. Then, there exists a finite quotient $\mc{L}^{\ast}=\mc{L}/\sim$ of $\mc{L}$ where $\Ps$ induces a uniformizable
	projective structure. In other words, the pair $(\mc{L}, \Ps)$ is obtained as a finite ramified covering of a pair
	$(\mc{L}^{\ast}, \Ps_{\sim})$ where $\mc{L}^{\ast}$ is a Riemann surface (orbifold) of finite type and where $\Ps_{\sim}$ is a uniformizable
	projective structure on $\mc{L}^{\ast}$.
\end{lm}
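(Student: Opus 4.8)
\noindent
The plan is to realise $\mc{L}^{\ast}$ as the quotient of the unit disc by the \emph{full group of projective symmetries} of the developing map of $\Ps$, and to show that this group is a discrete overgroup of the uniformizing group of $\mc{L}$ in which the latter has finite index; granting this, uniformizability of the induced structure is essentially automatic.

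First I would fix notation: write $\widetilde{\mc{L}}=\DD$ for the universal covering of $\mc{L}\setminus\Upsilon$, let $\Gamma=\pi_1(\mc{L}\setminus\Upsilon)$ be the corresponding (torsion-free) Fuchsian group, and let $\Dv:\DD\to\CC\PP^1$ and $\mu:\Gamma\to\PSLC$ be the developing map and monodromy of $\Ps$, with $G=\mu(\Gamma)$. Since $\Ps$ is a covering projective structure, $\Dv$ is a covering onto its image $U:=\Dv(\DD)$, and as $\DD$ is simply connected this realises $\Dv:\DD\to U$ as the universal covering of $U$; thus $U$ is a hyperbolic plane domain and $D:=\mathrm{Deck}(\Dv)\cong\pi_1(U)$ is a torsion-free Fuchsian group with $\DD/D\cong U$. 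By hypothesis $G$ acts properly discontinuously on $U$ by M\"obius maps; being biholomorphisms of the hyperbolic surface $U$ these are isometries for its Poincar\'e metric, so the set $\widehat{D}\subset\mathrm{Aut}(\DD)$ of all lifts to $\DD$ of all elements of the $G$-action on $U$ is a group, acts properly discontinuously on $\DD$, hence is a discrete Fuchsian group, and sits in an exact sequence $1\to D\to\widehat{D}\to G\to 1$.

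Next I would record the two facts tying $\Gamma$ to $\widehat{D}$. The equivariance relation $\Dv\circ\gamma=\mu(\gamma)\circ\Dv$ says exactly that each $\gamma\in\Gamma$ is a lift to $\DD$ of the M\"obius map $\mu(\gamma)\in G$, so $\Gamma\subset\widehat{D}$ as subgroups of $\mathrm{Aut}(\DD)$; and the assignment sending $\delta\in\widehat{D}$ to the (unique, since $U$ is open) M\"obius map $g$ with $\Dv\circ\delta=g\circ\Dv$ is a homomorphism $\mu^{\ast}:\widehat{D}\to\PSLC$ extending $\mu$ and with $\ker\mu^{\ast}=D$. Now comes the crucial point, and the one place where the hypotheses really enter: $[\widehat{D}:\Gamma]<\infty$. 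Indeed, since $\mc{L}$ is of finite type, $\DD/\Gamma=\mc{L}\setminus\Upsilon$ has finite hyperbolic area, whereas $\DD/\widehat{D}$ has strictly positive hyperbolic area because $\widehat{D}$ is discrete; an infinite index would place infinitely many isometric copies of a fundamental domain of $\widehat{D}$ inside one of $\Gamma$, contradicting finiteness of the area of $\mc{L}\setminus\Upsilon$. Consequently $\mc{L}^{\ast}:=\DD/\widehat{D}$, which is isomorphic to $U/G$, is a Riemann surface orbifold of finite type, and the natural projection $\mc{L}=\DD/\Gamma\to\mc{L}^{\ast}$ — i.e.\ the quotient of $\mc{L}$ by the equivalence relation $\sim$ given by $\widehat{D}$-orbits — is a finite ramified covering (an unramified covering in the orbifold sense), branched only over the cone points of $\mc{L}^{\ast}$.

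To conclude, I would check that the pair $(\Dv,\mu^{\ast})$ defines a singular projective structure $\Ps_{\sim}$ on $\mc{L}^{\ast}$, with singular set the image of $\Upsilon$ enlarged by the cone points of $\mc{L}^{\ast}$ — that the new singular points carry elliptic (of finite order) or parabolic local monodromy is precisely what the boundedness of $\Ps$ and the local analysis underlying Theorem~A supply — that its pullback under $\mc{L}\to\mc{L}^{\ast}$ is the original $\Ps$, since $\mu^{\ast}|_{\Gamma}=\mu$, and, the decisive point, that because $\ker\mu^{\ast}=D$ the monodromy covering of $(\mc{L}^{\ast},\Ps_{\sim})$ is $\DD/D=U$ (up to removing finitely many punctures, according to the chosen singular set) with monodromy--developing map the tautological inclusion $U\hookrightarrow\CC\PP^1$; this map is injective, so $\Ps_{\sim}$ is uniformizable. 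The hard part will be the middle step — exhibiting $\widehat{D}$, verifying that it is discrete, and deducing $[\widehat{D}:\Gamma]<\infty$ from the finite-area hypothesis — after which only bookkeeping with covering spaces and with the singular sets remains.
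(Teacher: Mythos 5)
Your argument is correct, but it follows a genuinely different route from the paper's. The paper works on the monodromy covering $\mc{L}_\mu$: it shows that the fibres of the monodromy-developing map $\Dv_\mu$ are finite --- discreteness of a fibre inside a single fundamental domain comes from the covering hypothesis, while finiteness near the punctures is exactly where the boundedness of $\Ps$ is invoked --- and then defines $\mc{L}^{\ast}$ as the quotient of $\mc{L}$ by the identification of points with equal $\Dv_\mu$-image. You instead work upstairs on $\DD$, assemble the group $\widehat{D}$ of all lifts of the monodromy action on $U=\Dv(\DD)$, and obtain $\mc{L}^{\ast}=\DD/\widehat{D}\cong U/G$ together with finiteness of the covering from the index bound $[\widehat{D}:\Gamma]<\infty$, deduced from the finite hyperbolic area of $\mc{L}\setminus\Upsilon$ and the positive covolume of the discrete group $\widehat{D}$. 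The two constructions yield the same quotient (both are $U/G$), but your route buys a cleaner group-theoretic picture: the extended monodromy $\mu^{\ast}$ with kernel $D=\mathrm{Deck}(\Dv)$ makes uniformizability of $\Ps_{\sim}$ immediate, since the monodromy covering of $\mc{L}^{\ast}$ is literally $U$ with the inclusion as developing map, and it replaces the paper's local analysis at the punctures by a soft area/Gauss--Bonnet argument (so that boundedness of $\Ps$ plays essentially no role in your proof beyond the finite-area setup, whereas it is the engine of the paper's fibre-finiteness step). Two points you should still write out in full: the verification that $\widehat{D}$ acts properly discontinuously on $\DD$ (combine proper discontinuity of $G$ on $U$ with that of $D$ on $\DD$, noting that distinct lifts of a given element of $G$ differ by an element of $D$), and the observation that every element of $G$ does lift to $\DD$, which is automatic because $\DD$ is simply connected. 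Both your proof and the paper's implicitly take the singular set $\Upsilon$ to be finite, which is the intended reading of ``finite type'' here.
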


\begin{proof} On the universal covering of $\mc{L}$, the developing map arising from $\Ps$ is a covering map of its image. The same
	then applies to the monodromy-developing map $\Dv_\mu$ from the monodromy covering $\mc{L}_{\mu}$ of $\mc{L}$ to
	its image in $\CC \PP^1$. Denote by ${\rm Im}\, (\Dv_\mu) \subset \CC \PP^1$ the image of $\Dv_\mu$. By definition, it is
	an open set invariant by the monodromy group $\Gamma$ of $\Ps$.

	Next, we claim that	the fibers of $\Dv_\mu$ are finite. To check this claim, consider a point $q_1 \in \mc{L}_{\mu}$
	lying in a fundamental domain $\mc{L}_0 \subset \mc{L}_{\mu}$ with respect to the covering $\mc{L}_{\mu} \rightarrow \mc{L}$.
	Let $U_0 \subset {\rm Im}\, (\Dv_\mu)$ be given by $U_0 = \Dv_\mu (\mc{L}_0)$. The equivariance relation verified
	by developing maps (Equation~(\ref{eq:dev-mon-rel})) implies that $U_0$ is a fundamental domain for the
	action of $\Gamma$ on ${\rm Im}\, (\Dv_\mu)$. In particular, the fact that the group of deck-transformations
	of the covering $\mc{L}_{\mu} \rightarrow \mc{L}$ is naturally identified with the group $\Gamma$ combined with
	Equation~(\ref{eq:dev-mon-rel}) to ensure that another point $q_2 \in \mc{L}_{\mu}$ satisfying
	$\Dv_\mu (q_1) = \Dv_\mu (q_2)$ must belong to the same fundamental domain $\mc{L}_0$ as $q_1$.

	Letting $z_0 = \Dv_\mu (q_1)$, the fiber $\Dv_\mu^{-1} (z_0)$ is therefore contained in $\mc{L}_0$ which is identified
	with the original Riemann surface (orbifold) $\mc{L}$. The fiber $\Dv_\mu^{-1} (z_0)$ is also a discrete set since
	$\Dv_\mu$ is a covering map. To conclude that $\Dv_\mu^{-1} (z_0)$ is a finite set, we now proceed as follows.

	Recall that $\mc{L}_0 \simeq \mc{L}$ is isomorphic to a compact Riemann surface (orbifold) $S$ minus finitely many
	points $p_1, \ldots , p_k$. The bounded nature of $\Ps$, however, ensures that a small neighborhood of any of the punctures
	$p_i$ can contain only finitely many points of the fiber $\Dv_\mu^{-1} (z_0)$: this follows from that $\Ps$ is
	of bounded type and therefore singular points of $\Ps$ are at worst poles of order~$1$ for the corresponding
	Schwarzian differential.
	Therefore, up to finitely many points, the fiber $\Dv_\mu^{-1} (z_0)$ yields a discrete set of a compact part
	of $S$. The finiteness of $\Dv_\mu^{-1} (z_0)$ follows at once.

	Next, consider the following equivalence relation in $\mc{L}_{\mu}$: two points in $\mc{L}_{\mu}$ are identified
	if they have the same image under $\Dv_\mu$. The preceding shows that equivalence classes are finite (and of same
	cardinality). Moreover the group of deck-transformations of the covering $\mc{L}_{\mu} \rightarrow \mc{L}$ sends
	equivalence classes to equivalence classes owing to Equation~(\ref{eq:dev-mon-rel}). In particular, we obtain an equivalence
	relation in $\mc{L}$ itself: two points $p, q$ are equivalent if they have lifts $\tilde{p}, \tilde{q}$ in
	$\mc{L}_{\mu}$ (necessarily belonging to a same fundamental domain) such that $\Dv_\mu (\tilde{p}) =
	\Dv_\mu (\tilde{q})$. Clearly, the quotient $\mc{L}^{\ast}$ of $\mc{L}$ by this equivalence relation still is
	a Riemann surface (orbifold) of finite type.
	
	It only remains to show that $\mc{L}^{\ast}$ is endowed with a uniformizable projective structure $\Ps_{\sim}$
	induced by $\Ps$. For this, recall that a projective structure can also be defined by the pair consisting
	of a developing map and a monodromy group satisfying Equation~(\ref{eq:dev-mon-rel}). In the present case, we simply
	identify the points of $\mc{L}_{\mu}$ having the same image under $\Dv_\mu$ obtaining a new manifold
	$\mc{L}_{\mu}^{\ast}$. By construction $\Dv_\mu$ induces a developing map $\Dv_{\mu, \sim}$ which is a diffeomorphism
	from $\mc{L}_{\mu}^{\ast}$ to ${\rm Im}\, (\Dv_\mu)$. Similarly, what precedes ensures that the group of deck transformations
	of $\mc{L}_{\mu}$ still acts on $\mc{L}_{\mu}^{\ast}$ and that the quotient of this action is nothing but $\mc{L}^{\ast}$.
	Finally, $\Dv_{\mu, \sim}$ is still equivariant with respect to the action of the mentioned deck-transformation group on
	$\mc{L}_{\mu}^{\ast}$ and the action of $\Gamma$ on ${\rm Im}\, (\Dv_\mu)$. It therefore defines a projective structure
	on $\mc{L}^{\ast}$ which is uniformizable since $\Dv_{\mu, \sim} : \mc{L}_{\mu}^{\ast} \rightarrow {\rm Im}\, (\Dv_\mu)$ is
	a diffeomorphism. The lemma is proved.
\end{proof}

We are now ready to derive Theorem~C from the analogous results in \cite{shiga} and \cite{kra3} about bounded covering projective
structures.

\begin{proof}[Proof of Theorem C]
	To prove the first assertion of Theorem~C, let $\mathcal{S} (\mc{L})$ denote the space of bounded covering projective structures
	on $\mc{L}$. Similarly, let $\mathcal{K} (\mc{L})$ be the set formed by bounded projective structures
	whose monodromy group is Kleinian with non-empty discontinuity set. Finally $\mathcal{U} (\mc{L})$ denotes the
	set of uniformizable projective structure on $\mc{L}$. Bers simulteneous uniformization theorem shows that
	quasi-conformal	deformations of the canonical projective structure on $\mc{L}$ is contained in the interior of $\mathcal{U} (\mc{L})$.
	To show the converse inclusion, we need the result of \cite{shiga} claiming that the interior of
	$\mathcal{K} (\mc{L}) \cap \mathcal{S} (\mc{L})$ actually coincides with the mentioned space of quasi-conformal	deformations. However,
	the interior of $\mathcal{U} (\mc{L})$ is contained in the interior of $\mathcal{K} (\mc{L}) \cap \mathcal{S} (\mc{L})$. In fact,
	Lemma~\ref{lm:UPS_to_CPS} yields $\mathcal{U} (\mc{L}) \subseteq \mathcal{S} (\mc{L})$ while we also have
	$\mathcal{U} (\mc{L}) \subseteq \mathcal{K} (\mc{L})$. For the latter assertion, recall from Lemma~\ref{lm:uniformizable_vs_quotient} that
	a Riemann surface with a uniformizable
	projective structure is the quotient of an open set of $\CC \PP^1$ - namely the image of its monodromy developing map - by
	its monodromy group. In particular, the monodromy group must be discrete and with non-empty discontinuity set.
	
	In turn, the second assertion of Theorem~C is an immediate consequence of the analogous statement in \cite{kra3} for bounded
	projective structures complemented by Lemma~\ref{lm:CPS_to_UPS}.
\end{proof}

\bigbreak

\begin{flushleft}
{\sc Ahmed Elshafei} \\
Centro de Matem\'atica da Universidade do Porto\\
Centro de Matem\'atica da Universidade do Minho\\
Portugal\\
a.el-shafei@hotmail.fr

\end{flushleft}

\bigskip

\begin{flushleft}
{\sc Julio Rebelo} \\
Institut de Math\'ematiques de Toulouse\\
118 Route de Narbonne\\
F-31062 Toulouse, FRANCE.\\
rebelo@math.univ-toulouse.fr

\end{flushleft}

\bigskip

\begin{flushleft}
{\sc Helena Reis} \\
Centro de Matem\'atica da Universidade do Porto, \\
Faculdade de Economia da Universidade do Porto, \\
Portugal\\
hreis@fep.up.pt \\
\end{flushleft}

\end{document}